\newcounter{results}[section] 
\theoremstyle{plain}
\newtheorem{theorem}[results]{Theorem}
\newtheorem{lemma}[results]{Lemma}
\newtheorem{proposition}[results]{Proposition}
\newtheorem{corollary}[results]{Corollary}
\newtheorem*{theorem*}{Theorem}
\newtheorem*{lemma*}{Lemma}
\newtheorem*{proposition*}{Proposition}
\newtheorem*{corollary*}{corollary}
\newtheorem*{exercise*}{Exercise}
\newtheorem*{fact*}{Fact}
\newtheorem*{problem*}{Problem}
\theoremstyle{remark}
\newtheorem{remark}[results]{Remark}
\newtheorem*{remark*}{Remark}
\newtheorem*{question*}{Question}
\theoremstyle{definition}
\newtheorem{definition}[results]{Definition}
\newtheorem{example}[results]{Example}
\newtheorem*{definition*}{Definition}
\newtheorem*{example*}{Example}
\numberwithin{equation}{section}
\newcommand{\N}{\ensuremath{\mathbb N}} 
\newcommand{\R}{\ensuremath{\mathbb R}} 
\newcommand{\eps}{\varepsilon}
\newcommand{\mres}{\mathbin{\vrule height 1.6ex depth 0pt width
0.13ex\vrule height 0.13ex depth 0pt width 1.3ex}}
\newcommand{\de}{\ensuremath{\,\mathrm d}}
\title[Superposition principle of 1-d local currents]{The superposition principle for \\ local 1-dimensional currents}
\author{L. Ambrosio}
\address{Scuola Normale Superiore, Pisa}
\email{luigi.ambrosio@sns.it}
\author{F. Renzi}
\address{Scuola Normale Superiore, Pisa}
\email{federico.renzi@sns.it}
\author{F. Vitillaro}
\address{Scuola Normale Superiore, Pisa}
\email{federico.vitillaro@sns.it}
\date{March 2025}
\begin{document}

\maketitle

\begin{abstract}
We prove that every one-dimensional locally normal metric current, intended in the sense of U. Lang and S. Wenger, admits a nice integral representation through currents associated to (possibly unbounded) curves with locally finite length, generalizing the result shown by E. Paolini and E. Stepanov in the special case of Ambrosio-Kirchheim normal currents. Our result holds in Polish spaces, or more generally in complete metric spaces for 1-currents with tight support.
\end{abstract}

\section{Introduction} Superposition principles are very useful tools in Analysis and Probability, as they allow to factorize a problem into lower dimensional ones.
Classical examples are the coarea formula for $BV$ functions, the slicing theory of currents and the Optimal Transport problem with cost=distance (in the latter
case, the decomposition being given by transport rays). In the context of 1-dimensional currents, it is particularly interesting to extend the classical S. Smirnov's \cite{SM} superposition principle even to the case when the ambient space is a metric space, having in mind the well-posedness of ODE's in metric measure spaces 
(see \cite{AT2}, \cite{ST}) or the theory of Optimal Transport, see \cite{PS1} and \cite{PS2}.

In this paper we prove that every one-dimensional locally normal metric current admits a decomposition, thus extending the results of E. Paolini and E. Stepanov in \cite{PS1} and \cite{PS2} for Ambrosio-Kirchheim normal currents.
A notion of local metric current was first introduced by U. Lang in \cite{UL1}. Later, he gave, alongside S. Wenger, a different, non equivalent definition, in  \cite{UL2}. We will work with the latter (recalled in \cref{sectioncurrents} below). Roughly speaking, this notion generalizes Ambrosio-Kirchheim metric currents by requiring finite mass only on bounded sets. In \cref{section-acyc} of this paper, we provide the natural concepts of ``subcurrent'' and ``acyclic current'' for this class of objects.
As it is customary in this field, we will work with the requirement that the mass measures of currents are tight, which is not restrictive, for instance, when the ambient metric space is Polish (see also Remark~\ref{rem-tightness}).

Let us define, preliminarly, the classes of curves we will be dealing with. As in \cite{PS1} and \cite{PS2} we consider compact
curves with finite length (which always admit a Lipschitz parameterization) and the distance induced by the sup norm 
when we consider the invariance under reparameterization:

\begin{definition}\label{defTheta}
Let $(E,d)$ be a complete metric space. We equip the set of Lipschitz curves $\theta:[0,1] \rightarrow E$ with the distance
\begin{equation} d_\Theta(\theta_1,\theta_2):=\inf\left\{
\max_{t\in [0,1]}
d(\theta_1(t),\theta_2(\phi(t)))\,:\,\phi\colon [0,1]\to [0,1] \mbox{ bijective increasing}
\right\},   
\end{equation}
and call two curves $\theta_i:[0,1]\to E$, $i=1,2$, equivalent, if $d_\Theta(\theta_1,\theta_2)=0$. 

The set of equivalence classes of Lipschitz curves equipped with the distance $d_\Theta$ will be denoted by $\Theta(E)$.
\end{definition}

\begin{remark}
$(\Theta(E), d_\Theta)$ can be written as a countable union of complete subspaces of the form $\Theta_L=\{[\theta] \in \Theta(E): \ell(\theta) \leq L \}$, $L \in \N$. For example, if $E$ is separable (and then $\Theta(E)$ is separable as well), this gives that any finite Borel measure $\eta$ on $\Theta(E)$ is tight.    
\end{remark}

A notable distinction from the classical case is that, in order to maintain the transport property of the representation of acyclic currents (which comes from (\ref{boundaryeq}) below), highlighted in \cite{PS1}, it is necessary to consider curves which are not necessarily of bounded length. This can be easily understood by looking at the basic example of the current associated to an unbounded simple oriented curve with a single boundary point (e.g. a half-line in $\R^d$), where, as it is somehow natural, the decomposition must be given by the curve itself. Clearly, the transport property has to be meant in a wider sense, involving the possibility of exchanging mass ``with infinity'' (see \cref{transport-remark}). Therefore in the context of local currents, it is more convenient to consider, instead, open 
ended curves with locally finite length in $E$, according to the following definition:

\begin{definition}\label{DefGamma}
Consider
$$X:=\bigl\{ \beta \in \operatorname{Lip}_{\rm loc}((0,1); E)\mid \ell(\beta \mres U) < \infty\,\,\text{for any bounded open set $U\subseteq E$}
\bigr\},$$
where $\ell(\beta\mres U)=\int_{\beta^{-1}(U)}|\dot\beta|\de t$ and $|\dot\beta|$ is the metric derivative of $\beta$.

We define the set $\Gamma(E)$ as the quotient $\faktor{X}{\sim}$, where $\beta_1 \sim \beta_2$ iff there exists $\phi: (0,1) \rightarrow (0,1)$ increasing and bijective s.t. $\beta_1=\beta_2 \circ \phi$.
\end{definition}

Notice that the difference between $\Theta(E)$ and $\Gamma(E)$ is essentially due to the requirement of locally finite length,
since any open ended curve with finite length has a unique extension to a curve in $\Theta(E)$. Actually, the local finiteness of the length of $\beta \in \Gamma(E)$ gives the following dichotomy: either $\lim_{t \to 0} \beta(t)$ exists, or $\lim_{t \to 0} d(\beta(t), \bar{x})=\infty$, for any $\bar{x} \in E$ (same for $t \to 1$). In addition, since all concepts
we will be dealing with are invariant under reparameterization, we will occasionally consider other open of closed intervals
as domains of our curves.

In Section~\ref{sectioncurves} we introduce, mainly for measure-theoretic purposes, the topology $\tau_\Gamma$ induced by the map
$\beta\mapsto T_\beta$ in \eqref{defTbeta}, canonically associating to $\beta\in\Gamma(E)$ the local 1-dimensional current in $E$. In this
connection, notice that this topology is not Hausdorff in $\Gamma(E)$ since $T_\beta=T_{\tilde{\beta}}$ does not imply that
$\beta$ is a reparameterization of $\tilde{\beta}$ (for example, concatenating $\beta$ with two copies with opposite orientation of a curve $\alpha$ with $\alpha(0)=\beta(1)$ does not change $T_\beta$).

Our main result is the following (here $\mathbf{N}_{1,b}(E)$ denotes the class of local and normal $1$-dimensional currents in $E$,
see Definition~\ref{defAKcurrent} and Definition~\ref{defnormal}).

\begin{theorem}\label{MT}
Let $(E,d)$ be a complete metric space and let $T \in \mathbf{N}_{1,b}(E)$. Then there exists 
a positive Borel measure $\eta$ over $\Gamma(E)$ such that
\begin{align}
&T=\int_{\Gamma(E)} T_\gamma \de\eta(\gamma), \label{decomp} \\
&\|T\|=\int_{\Gamma(E)} \|T_\gamma\| \de\eta(\gamma). \label{masseq}
\end{align}
Moreover, $T$ can be decomposed as $T=A+C$, where $C$ is a  cycle of $T$ and $A$ is acyclic.
Finally, denoting by $\eta_A$  the measure associated to $A$  as in \eqref{decomp} and \eqref{masseq}, we have 
that $\eta_A$-almost every curve is injective, and that
\begin{equation}
 \|\partial T\|=\|\partial A\|=\int_{\Gamma(E)} \|\partial T_\gamma\| \de\eta_A(\gamma).\label{boundaryeq}   
\end{equation}
\end{theorem}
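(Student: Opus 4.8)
The plan is to adapt the strategy of Paolini--Stepanov \cite{PS1}: first split off a maximal cycle, then decompose the remaining acyclic current by a flow/exhaustion argument. The two genuinely new difficulties, relative to the classical finite-mass setting, are the merely local finiteness of the mass and the consequent appearance of \emph{unbounded} decomposing curves.

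First I would produce the splitting $T = A + C$. Using the notions of subcurrent and acyclic current from \cref{section-acyc}, together with the additivity $\|S\| + \|T-S\| = \|T\|$ of mass on subcurrents, I would extract a cycle $C$ of $T$ that is maximal for the order ``$S$ is a subcurrent of $T$'', so that $A := T - C$ admits no nontrivial cycle and is therefore acyclic. A maximality argument (Zorn's lemma, or a greedy exhaustion of the ``cyclic mass'' bounded set by bounded set) produces such a $C$; the required control on chains comes from the finiteness of $\|T\|$ on every bounded set together with the tightness hypothesis. Since $\partial C = 0$, this yields both $\partial A = \partial T$ and $\|A\| + \|C\| = \|T\|$.

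Next I would decompose $A$ by exhaustion. Fix $x_0 \in E$ and set $B_R := B(x_0, R)$. For a.e. $R$ the slice of $A$ by $d(x_0, \cdot)$ at level $R$ has finite mass, so $A \mres B_R$ is a finite-mass normal current, which I would identify with an Ambrosio--Kirchheim normal current; its boundary is $(\partial A)\mres B_R$ plus an artificial trace on $\partial B_R$. Applying Paolini--Stepanov to $A \mres B_R$ gives a measure $\eta_R$ concentrated on injective curves whose boundaries do not cancel. Using the tightness of $\|A\|$, the compactness of the sets $\Theta_L$, and the representation $\beta \mapsto T_\beta$, I would pass to a limit $\eta_A$ as $R \to \infty$: curves pinned to genuine boundary points of $A$ converge to finite curves, while those repeatedly cut at $\partial B_R$ have their endpoints recede to infinity and converge to unbounded curves---exactly the elements of $\Gamma(E)$ that cannot be captured in $\Theta(E)$. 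The cycle $C$ is treated analogously, decomposing into boundaryless curves (closed loops and bi-infinite curves), and I set $\eta := \eta_A + \eta_C$.

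Finally I would verify the identities. Passing the scale-$R$ relations $A\mres B_R = \int T_\gamma \, d\eta_R$ and $\|A\mres B_R\| = \int \|T_\gamma\|\, d\eta_R$ to the limit, lower semicontinuity of mass and the built-in absence of cancellation upgrade the limiting inequalities to the equalities \eqref{decomp} and \eqref{masseq}. The boundary identity \eqref{boundaryeq} is the main obstacle: one must show that the artificial endpoints on $\partial B_R$ contribute nothing in the limit---the associated curves become unbounded, hence have $\|\partial T_\gamma\| = 0$ at that end---so that $\int \|\partial T_\gamma\|\, d\eta_A$ charges precisely the genuine boundary of $A$. The $\eta_A$-a.e. injectivity and the acyclicity guarantee that each genuine boundary point is met by curve-endpoints of a single sign, ruling out cancellation and giving $\int \|\partial T_\gamma\|\, d\eta_A = \|\partial A\| = \|\partial T\|$. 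The hard technical point is to control the slicing traces on $\partial B_R$ and to prove they escape to infinity rather than piling up on the finite boundary mass; this rests on the local normality of $T$ and a careful choice of radii via the coarea inequality for $\|T\|$ and $\|\partial T\|$.
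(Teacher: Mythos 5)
Your plan for the splitting $T=A+C$ matches the paper's \cref{decompcycacyc} (a greedy exhaustion of cyclic mass, weighted so the series of cycles converges locally), and your identification of the two new difficulties is accurate. But the core of your argument --- applying Paolini--Stepanov to $A\mres B_R$ to get measures $\eta_R$ on $\Theta(\overline{B}_R)$ and ``passing to a limit $\eta_A$ as $R\to\infty$'' --- has a genuine gap. The Paolini--Stepanov decomposition is highly non-canonical: there is no consistency between the decomposition of $A\mres B_R$ and that of $A\mres B_{R'}$ for $R'>R$ (a curve used at scale $R$ need not be a subcurve of any curve used at scale $R'$), so the family $(\eta_R)_R$ is not a projective system and ``the limit'' is undefined without a diagonal or selection argument you do not supply. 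Moreover the total masses $\eta_R(\Theta)$ are controlled by the artificial boundary created on $\partial B_R$, which grows like $\frac{\de}{\de r}\|T\|(\overline B_r)$ and is unbounded in general, so there is no weak-$*$ compactness for the $\eta_R$ themselves; and the curves you most need to track --- those repeatedly cut at $\partial B_R$, whose $d$-lengths diverge --- leave every set $\Theta_L$, which is precisely where the compactness you invoke lives. The claim that these curves ``converge to unbounded curves'' and that their artificial endpoints ``escape to infinity'' is exactly the hard point, and asserting it is not proving it: without control, mass and boundary can be lost or relocated in the weak limit (lower semicontinuity only gives one inequality in \eqref{masseq} and \eqref{boundaryeq}). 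A separate red flag is your claim that $C$ decomposes into closed loops and bi-infinite boundaryless curves; the paper explicitly does not obtain (and does not need) this --- the cyclic part is decomposed by PS into length-one curves whose individual boundaries merely cancel, and a decomposition of cycles into elementary subcycles (solenoids) is left open.

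The paper circumvents all of this by a different mechanism: it first reduces to $\mathbb M(\partial T)<\infty$ via \cref{infinite-boundary}, embeds $E$ isometrically into $\ell^\infty$, and replaces $d$ by a conformal metric $\delta=g(|x|)\,d$ with $g$ decaying fast enough that $T$ becomes a \emph{single} finite-mass normal current $\overline T$ on the completion $\overline E$, which adds exactly one point $x_\infty$ at infinity (\cref{single-point}; this is where $\ell^\infty$ is essential, cf.\ \cref{exnofinitemass}). Paolini--Stepanov is then applied once, to $\overline T$, and the resulting curves are cut at $x_\infty$ into elements of $\Gamma(E)$; the identities \eqref{decomp}--\eqref{boundaryeq} transfer back through $\|S\|_\delta=g(|x|)\|S\|$ (\cref{estimatefinitemass}, \cref{no-canc}). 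If you want to salvage an exhaustion-by-balls proof, you would need, at minimum, a consistent selection of the $\eta_R$ across scales and a compactness/tightness statement on $(\Gamma(E),\tau_\Gamma)$ strong enough to prevent loss of mass at the cut radii --- neither of which is standard.
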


In this generality, \cref{MT} represents a new result even in the Euclidean setting. However, our proof-technique is intrinsically metric, in the sense that it relies on the validity of the statement for normal currents in general metric spaces, due to Paolini and Stepanov, even when our space $E$ is Euclidean. Indeed, we exploit a simple trick which consists in 
using a $\|T\|$-dependent ``conformally'' modified distance $\delta$ (see \cref{conformal-sec}) of the distance $d$, turning $T$ into a current $\overline{T}$ of finite mass with respect to $\delta$ in the completion $\overline{E}$ of $E$ with respect to $\delta$. Therefore, when the mass of $\partial T$ is assumed to be already finite, we reduce ourselves to an application of Paolini and Stepanov's result, provided we are able to show that still $\partial\overline{T}$ has finite mass (this is not
obvious, see Example~\ref{exnofinitemass}, but it can be obtained at the expense of embedding isometrically our spaces into $\ell^\infty$). 
We recall here Paolini and Stepanov's result for further use (cfr. \cite[Theorem 5.1]{PS1} and \cite[Corollary 3.3]{PS2}).

\begin{theorem}\label{PS}
Let $T \in \mathbf{N}_{1}(E)$. Then there exists a finite positive Borel measure $\bar{\eta}$ over $\Theta(E)$ with total mass $\bar{\eta}(\Theta(E)) \leq \mathbb{M}(T)+\mathbb{M}(\partial T)$ such that
\begin{align}
&T(\omega)  =\int_{\Theta(E)} T_\theta(\omega) \de \bar{\eta}(\theta) \quad \forall\,\omega =f\de\pi\in \operatorname{Lip}_{\mathrm{b}}(E) \times \operatorname{Lip}(E),\label{rappresentazione}\\
&\label{massa}
\mathbb{M}(T)  =\int_{\Theta(E)} \mathbb{M}(T_\theta) \de \bar{\eta}(\theta)=\int_{\Theta(E)} \ell(\theta) \de \bar{\eta}(\theta), 
\end{align}
with $\bar{\eta}$-a.e. $\theta \in \Theta(E)$ contained in $\operatorname{supp}(T)$.\\
Furthermore, $T$ can be decomposed into a sum $T=A+C$ where $\partial C=0$, $C\leq T$ and  $A$ is acyclic.
In addition,
\begin{itemize}
\item[(i)] The measure $\eta_A$ obtained by applying the result on $A$ as in (\ref{rappresentazione}) and \eqref{massa}
satisfies $\eta_A (1)=(\partial T)^+$, $\eta_A (0)=(\partial T)^-$ and
$\eta_A$-a.e. curve in $\Theta(E)$ is injective; in particular
\begin{equation}
 \mathbb{M}(\partial T) =\mathbb{M}(\partial A)=\int_{\Theta(E)} \mathbb{M}(\partial T_\theta) \de \eta_A(\theta). 
\end{equation}
\item[(ii)] the measure $\eta_C$ obtained by applying the result on $C$ as in (\ref{rappresentazione}) and \eqref{massa} satisfies $\eta_{C}(0)=\eta_{C}(1)=\|C\|$ and
$$\mathbb{M}(T_\theta)=\ell(\theta)=1\qquad\text{for $\eta_C$-a.e. curve in $\Theta(E)$.}$$
\end{itemize}
\end{theorem}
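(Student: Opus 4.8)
The plan is to follow Smirnov's scheme adapted to the metric setting: first split off a maximal cyclic part, then represent the remaining acyclic current by simple curves via a maximal-measure argument, and finally decompose the cyclic part into loops. First I would produce the splitting $T=A+C$. Consider the family $\mathcal{C}=\{S\leq T:\partial S=0\}$ of cyclic subcurrents and set $m=\sup_{S\in\mathcal{C}}\mathbb{M}(S)\leq\mathbb{M}(T)$. Using that subcurrents are stable under the natural ``max'' lattice operation and that $\mathcal{C}$ is closed under it, one extracts an increasing maximizing sequence whose limit $C$ is a cycle realizing $m$ and is maximal; here $C\leq T$ is part of the conclusion. Setting $A:=T-C$, the subcurrent relation gives the additivity $\mathbb{M}(A)+\mathbb{M}(C)=\mathbb{M}(T)$ and $\partial A=\partial T$, while maximality forces $A$ to be acyclic, since any nontrivial cycle inside $A$ could be added to $C$. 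This part is soft and uses only the lattice structure of subcurrents.

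The heart of the matter is a single-curve extraction lemma for the acyclic piece. If $A$ is acyclic and $A\neq 0$, then $\partial A\neq 0$ (a boundaryless normal current is a cycle, hence $0$ by acyclicity), so $(\partial A)^+$ is nontrivial; I would show that $A$ contains a subcurrent $c\,T_\theta\leq A$ with $\theta\in\Theta(E)$ \emph{injective}, running from $\operatorname{supp}((\partial A)^-)$ to $\operatorname{supp}((\partial A)^+)$ and with $\mathbb{M}(T_\theta)=\ell(\theta)$. Concretely, starting from mass sitting on a source, one ``follows the current'', building the path by repeatedly selecting, through a slicing and measurable-selection argument, pieces of $A$ along which the orientation is coherent; acyclicity guarantees the curve never closes up, hence terminates at a sink and is injective. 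This is the step I expect to be the main obstacle, since it requires manufacturing an honest Lipschitz path out of the abstract mass-and-orientation data carried by a metric current.

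Granting the extraction lemma, I would represent $A$ by a maximal-measure argument rather than a literal transfinite peeling. Consider the set of finite positive Borel measures $\eta$ on injective curves in $\Theta(E)$ that are subrepresentations of $A$, meaning $\int T_\theta\,d\eta\leq A$ and $\int\mathbb{M}(T_\theta)\,d\eta\leq\mathbb{M}(A)$; tightness of finite measures on $\Theta(E)$, coming from the decomposition $\Theta(E)=\bigcup_L\Theta_L$ together with continuity of $\theta\mapsto T_\theta$, lets me take a maximal element $\eta_A$ via Prokhorov compactness. The essential point is that this maximal measure leaves no residue: if $R:=A-\int T_\theta\,d\eta_A$ were nonzero it would again be acyclic with $\partial R\neq 0$ (subcurrents of acyclic currents are acyclic), and the extraction lemma would furnish a further curve, contradicting maximality. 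Hence $\int T_\theta\,d\eta_A=A$, giving \eqref{rappresentazione} and, by the additivity of masses along the extraction, $\mathbb{M}(A)=\int\mathbb{M}(T_\theta)\,d\eta_A=\int\ell(\theta)\,d\eta_A$; support containment is inherited at each step. The count $\eta_A(\Theta(E))=(\partial A)^+(E)=\tfrac12\mathbb{M}(\partial A)$ together with the analogous bound for loops yields $\bar\eta(\Theta(E))\leq\mathbb{M}(T)+\mathbb{M}(\partial T)$.

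Finally I would identify the endpoint marginals and treat the loops. Writing $e_i(\theta)=\theta(i)$, injectivity gives $\partial T_\theta=\delta_{\theta(1)}-\delta_{\theta(0)}$, so testing \eqref{rappresentazione} against $0$-forms yields $\partial A=(e_1)_{\#}\eta_A-(e_0)_{\#}\eta_A$. Since the construction always routes curves from genuine sinks to genuine sources, one checks that the two endpoint marginals are mutually singular, so no mass cancels and they must coincide with the Jordan parts: $\eta_A(1)=(\partial T)^+$, $\eta_A(0)=(\partial T)^-$, whence $\mathbb{M}(\partial T)=\mathbb{M}(\partial A)=\int\mathbb{M}(\partial T_\theta)\,d\eta_A$. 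For the cyclic current $C$ I would run the same extraction: now there is no boundary to terminate at, so every extracted subcurve closes into a loop, and after reparameterizing each loop on $[0,1]$ to unit length one obtains $\mathbb{M}(T_\theta)=\ell(\theta)=1$ for $\eta_C$-a.e.\ curve. Breaking each loop at a basepoint distributed according to its own mass makes the two (now equal) endpoint marginals reproduce the mass distribution of $C$, giving $\eta_C(0)=\eta_C(1)=\|C\|$.
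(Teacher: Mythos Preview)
The paper does not prove this theorem: it is quoted as a known result of Paolini and Stepanov (see the sentence ``We recall here Paolini and Stepanov's result for further use (cfr.\ \cite[Theorem 5.1]{PS1} and \cite[Corollary 3.3]{PS2})'' immediately preceding the statement). So there is no proof in the paper to compare your proposal against; the theorem functions purely as a black box on which the main results are built.

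That said, let me comment on the proposal itself. Your outline is broadly the Smirnov/Paolini--Stepanov scheme, and you are honest that the curve-extraction step is the crux. Two points deserve flagging. First, the claim that cycles of $T$ are closed under a ``max'' lattice operation is not justified and is not how the cyclic/acyclic splitting is obtained in this paper (see Theorem~\ref{decompcycacyc}): there the maximal cycle is built by an iterative exhaustion (pick a cycle carrying more than half the remaining cyclic mass, subtract, repeat), not by a lattice supremum; a ``max'' of two subcurrents is not available in general. Second, the maximal-measure argument hides real work: one needs that the set of admissible $\eta$ is weakly closed and that the subcurrent relation survives the limit, and---more seriously---that the residue $R$ is still normal with controllable boundary so the extraction lemma can be reapplied. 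These are exactly the technicalities that the Paolini--Stepanov papers address in detail, so your sketch is a correct roadmap but not a proof.
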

   
We remark that the main technical difficulties faced in order to state and prove \cref{MT} arise from the issue of infinite mass for the current $T$, which was also the original motivation for this work, see the forthcoming paper \cite{AILP2} where this problem is raised. On the other hand, dropping the condition $\mathbb{M}(\partial T)<\infty$, which is done in \cref{section-acyc}, is relatively easier. Indeed, first in \cref{decompcycacyc} we prove the decomposition $T=A+C$ with $C$ (maximal) subcycle of $T$ and $A$ acyclic, for any $m$-dimensional local current $T$. Then, focusing on the acyclic part in the case $m=1$, we show that it can be written as a countable sum of subcurrents with finite boundary, with no cancellations even at the level of boundaries, see \cref{infinite-boundary}, a result which may also be of independent interest. This is done almost by hand, using the local finiteness of $\| \partial T\|$ and applying \cref{PS} on the restriction of $T$ to balls in a clever way. \\
To conclude, note that our work does not provide a result of decomposition of cycles in elementary subcycles, as it was done in the case of finite mass already in \cite{SM} and again in \cite{PS2} in the metric setting, involving averages on ``almost periodic paths'' or ``solenoids'' (see \cite[Section 4]{PS2} for precise definitions and statements in this sense) .
Solenoids cannot in general be decomposed into closed curves, and so in some sense they are the most elementary 1-dimensional cycles which can be used for this purpose. \\
In our case, the matter is that a decomposition in solenoids is not preserved by reversing the metric construction that we perform to reduce ourselves to the case $\mathbb{M}(T)<\infty$, and so its existence does not seem to follow from our approach.\\

{\bf Acknowledgments.} The authors have been supported by the MIUR-PRIN 202244A7YL project "Gradient Flows and Non-Smooth Geometric Structures with Applications to Optimization and Machine Learning".

\section{Preliminaries}

In this section, $(E,d)$ will denote a generic metric space. We denote by $\operatorname{Lip}_{\mathrm{b}}(E)$ and $\operatorname{Lip}_{\mathrm{bs}}(E)$ the spaces of bounded Lipschitz functions and of Lipschitz functions with bounded support in $E$ respectively. We denote by
${\mathcal B}(E)$ the $\sigma$-algebra of Borel sets and by ${\mathcal B}_b(E)\subseteq {\mathcal B}(E)$ 
the class of bounded Borel sets.  Analogously, we denote by $\mathcal{B}^\infty(E)$ and
$\mathcal{B}_{\mathrm{bs}}^\infty(E)$ the class of bounded Borel functions and bounded 
Borel functions with bounded support, respectively. 

In the sequel, $\mathbf{M}_0(E)$ denotes the class of Borel $\sigma$-additive measures 
$\mu:{\mathcal B}(E)\to\R$ with finite total variation concentrated on a  $\sigma$-compact set 
and by $\mathbf{M}_{0,b}(E)$ the class of set functions $\mu:{\mathcal B}_b(E)$
such that $B\mapsto\mu(U\cap B)$ belongs to $\mathbf{M}_0(E)$ for any bounded Borel set $U$.
Clearly any nonnegative $\mu\in\mathbf{M}_{0,b}(E)$ can be canonically and monotonically extended 
to a $\sigma$-additive set function, for which we use the same notation $\mu$, defined on the whole of ${\mathcal B}(E)$, 
which then belongs to $\mathbf{M}_0(E)$ iff $\mu(E)<\infty$.

\begin{remark}\label{rem-tightness}
Note that the ``tightness'' requirement in the previous definition is automatically satisfied if $E$ enjoys topological properties ensuring that every finite nonnegative Borel measure is tight.
This is true when $(E,d)$ is a Polish space or if we assume (consistently with the Zermelo-Fraenkel set theory, as done
also in \cite{AK}) that the density character of every metric space considered is an Ulam number (see \cite[Chapters 1 and 7]{B}).
\end{remark}

\subsection{(Local) metric currents}\label{sectioncurrents}

We now introduce the notions of metric currents that we will need, starting with the case of $0$-dimensional currents. 

\begin{definition} [$0$-dimensional metric currents]
We say that a linear functional $T: \operatorname{Lip}_{\mathrm{bs}}(E)\rightarrow\R$ is a $0$-dimensional current with locally finite mass
if there exists a nonnegative $\mu\in\mathbf{M}_{0,b}(E)$ such that
$$
|T(f)|\leq \int_E|f| \de\mu\qquad\forall f\in \operatorname{Lip}_{\mathrm{bs}}(E).
$$
It is easily seen that the class of nonnegative $\mu\in\mathbf{M}_{0,b}(E)$ satisfying the inequality above
is a lattice in the class of nonnegative $\mu\in\mathbf{M}_{0,b}(E)$. The least one is called mass of $T$ and denoted
by $\|T\|$.

We say that $T$ has finite mass if the inequality above holds for some $\mu\in\mathbf{M}_{0}(E)$. 
\end{definition}

Clearly any $0$-dimensional current with locally finite mass is representable by integration with respect to a
measure $\mu_T\in \mathbf{M}_{0,b}(E)$, namely $T(f)=\int_E f \de\mu_T$, which shows that we can 
canonically identify the class of 
$0$-dimensional metric currents with locally finite mass with $\mathbf{M}_{0,b}(E)$. In the case of
$0$-dimensional currents $T$ with finite mass, the canonical identification is with $\mathbf{M}_0(E)$, so that
in particular $T(f)$ is well defined when $f\in\operatorname{Lip}_b(E)$ (or even when $f$ is a bounded Borel function).
 
Now, in the same vein, we consider the case $m>0$, where the main difference between the local and the non local
case consists in the replacement of $\operatorname{Lip}_{\mathrm{bs}}(E)$ with
$\operatorname{Lip}_b(E)$ in the first argument of $T$, see \cite{AK} and \cite{UL2}.

\begin{definition}[Metric currents with finite and locally finite mass]\label{defAKcurrent}
\noindent A functional $T(f\,\de\pi_1\wedge\cdots\wedge\de\pi_m)$ defined on
$$
\operatorname{Lip}_{\mathrm{bs}}(E) \times\left[\operatorname{Lip}(E)\right]^m \rightarrow \mathbb{R}
\qquad m \geq 1
$$
is called an $m$-dimensional metric current with locally finite mass on $E$ if the following properties hold:\\
\begin{enumerate}
\item\label{point1defmass} $T$ is multilinear;\\
\item\label{point2defmass} $T$ is continuous in the following sense: if $\pi^j_i, \pi_i \in \operatorname{Lip}(E)$,
$\pi_{i}^{j}\rightarrow\pi_{i}$ as $j\to\infty$ pointwise in $E$ for all $i=1,\ldots,m$ and $\sup_{i, j} \operatorname{Lip}(\pi_{i}^{j})<\infty$, then
$$
\lim_{j\to\infty}T(f\,\de\pi_1^j\wedge\cdots\wedge\de\pi_m^j) = 
T\left(f\,\de\pi_1\wedge\cdots\wedge\de\pi_m\right)\qquad\forall f\in \operatorname{Lip}_{\mathrm{bs}}(E);
$$
\item\label{point3defmass} $T(f\,\de\pi_1\wedge\cdots\de\pi_m)=0$ whenever 
$\pi_{i}\vert_{\operatorname{supp}f}=0$ for some $i\in\{1,\ldots,m\}$;
\item\label{point4defmass} there exists a nonnegative $\mu\in\mathbf{M}_{0,b}(E)$ such that
\begin{equation}\label{inequalitAKcurrents}
\left|T(f\, \de\pi_1\wedge\cdots\wedge\de\pi_m)\right| \leq \prod_{i=1}^{m} \operatorname{Lip}(\pi_{i}) \int_{E}|f| \de\mu
\end{equation}
holds for all $(f, \pi_1, \ldots,\pi_m)\in\operatorname{Lip}_{\mathrm{bs}}(E) \times\left[\operatorname{Lip}(E)\right]^m$.
\end{enumerate}

We say that $T$ has finite mass if \eqref{inequalitAKcurrents} holds for some $\mu\in \mathbf{M}_0(E)$.
We denote by $\mathbf{M}_{m}(E)$, $\mathbf{M}_{m,b}(E)$, the vector spaces of $m$-dimensional
metric currents with finite mass and locally finite mass, respectively, so that
$\mathbf{M}_{m}(E)\subseteq\mathbf{M}_{m,b}(E)$.
\end{definition} 

\begin{remark}[Extensions of currents]
Using the density of Lipschitz functions with bounded support in $L^1(E,\|T\|)$, currents with locally finite mass
can be canonically extended to $\mathcal{B}_{\mathrm{bs}}^{\infty}(E)\times\left[\operatorname{Lip}(E)\right]^m$
retaining properties (1)-(4). Analogously, currents with finite mass can be canonically extended to 
$\mathcal{B}^\infty(E)\times\left[\operatorname{Lip}(E)\right]^m$
retaining properties (1)-(4).  In particular, the possibility to extend currents with finite mass to
$\operatorname{Lip}_{\mathrm{b}}(E) \times\left[\operatorname{Lip}(E)\right]^{m}$ grants that
our presentation is consistent with the axiomatization in \cite{AK}, see also Remark~\ref{rem:extensions} below.
\end{remark}

\begin{definition}[Mass and support of currents]
As in the case $m=0$,  for $T\in\mathbf{M}_{m,b}(E)$, we call mass and denote $\|T\|$ the least nonnegative
measure $\mu\in\mathbf{M}_{0,b}(E)$ satisfying \eqref{inequalitAKcurrents}. When 
$T\in\mathbf{M}_{m}(E)$, the total mass of $T$ is defined by $\mathbb{M}(T):=\|T\|(E)$ and one can 
easily show that $(\mathbf{M}_{m}(E),\mathbb{M})$ is a  Banach space.

For $T\in \mathbf{M}_{m,b}(E)$, the support of $T$ is defined as the closed set
$$
\operatorname{supp} T:=\operatorname{supp}\|T\|=\{z \in E:\ \|T\|(B_r(z))>0 \quad \forall r>0\}.
$$
If $\Sigma$ is any $\sigma$-compact set with $\|T\|(E \setminus \Sigma)=0$, then $\Sigma \setminus \operatorname{supp} T$ is contained in the union of countably many $\|T\|$-negligible open balls, thus
$\|T\|(E \setminus \operatorname{supp} T)=0$, i.e. $\|T\|$ is concentrated on $\operatorname{supp} T$.
\end{definition}

\begin{remark}\label{rem_dualmass} Arguing as in \cite{AK} and \cite{UL2} one can prove that on open
sets $A$ the mass measure $\|T\|(A)$ can be computed as follows:
$$
\sup \sum_{\lambda\in\Lambda}T(f_\lambda\,\de\pi_{\lambda,1}\wedge\cdots\wedge\de\pi_{\lambda,m})
$$
where the supremum runs among all finite families $\{(f_\lambda,\pi_{\lambda,1},\ldots,\pi_{\lambda,m})\}_{\lambda\in\Lambda}$, with $\pi_{\lambda,i}\in\operatorname{Lip}_1(E)$ and $\sum_{\lambda\in\Lambda}|f_\lambda|\leq\chi_A$.
\end{remark}

Along with the strong notion of convergence of currents given by the mass norm $\mathbb{M}$, it is often useful to consider the usual weak convergence.

\begin{definition}[Weak convergence]
 Let $T_n, \, T \in \mathbf{M}_{m,b}(E)$, we say that $T_n$ is weakly convergent to $T$, and write $T_n \rightharpoonup T$, if
 $$\lim_{n \rightarrow \infty} T_n(f \de\pi_1 \wedge \ldots \de\pi_m)=T(f \de\pi_1 \wedge \ldots \de\pi_m)$$
 for all $(f, \pi_1, \ldots, \pi_m) \in \operatorname{Lip}_{\mathrm{bs}}(E) \times\left[\operatorname{Lip}(E)\right]^m$.
\end{definition}

Note that, by \cref{rem_dualmass}, $\|T\|(A)$ is lower semicontinuous under weak convergence for any open set $A\subseteq E$. 
Also, we introduce the push-forward operator in the class of local metric currents; in the class of Ambrosio-Kirchheim (i.e. currents with finite mass) the condition that the preimage under $\varphi$ of bounded sets is bounded, which grants that
$f\circ\varphi\in \operatorname{Lip}_{\mathrm{bs}}(E)$, is not needed.

\begin{definition}[Push-forward]
Let $T \in \mathbf{M}_{m,b}(E)$, let $(E',d')$ be another metric space and let $\varphi \in \operatorname{Lip}(E, E')$ be such that $\varphi^{-1}(B)$ is bounded for any bounded set $B \subseteq E'$. We define a function
$\varphi_\#T: \operatorname{Lip}_{\mathrm{bs}}(E') \times
\left[\operatorname{Lip}(E')\right]^m \to \R$ by
$$\varphi_\#T(f\, \de\pi_1\wedge\cdots\wedge\pi_m):=
T(f\circ\varphi\, \de(\pi_1\circ\varphi)\wedge\cdots\wedge\de(\pi_m\circ\varphi))$$
and call it the push-forward under $\varphi$ of $T$. Note that $\varphi_\#T \in \mathbf{M}_{m,b}(E')$ 
with $\|\varphi_\#T\|\leq\bigl[\operatorname{Lip}(\varphi)\bigr]^m\varphi_\#\|T\|$.
\end{definition}

\begin{definition}[Restriction]
For $T\in\mathbf{M}_{m,b}(E)$, $g\in \mathcal{B}^{\infty}(E)$, $\tau\in \left[\operatorname{Lip}(E)\right]^{k}$, $0\leq k\leq m$,  we denote by  $T \, \mres (g, \tau)\in \mathbf{M}_{m-k,b}(E)$  the restriction of $T$ to $(g,\tau)$, defined by 
$$T\mres (g\,\de\tau_1\wedge\cdots\wedge\de\tau_k) (f\,\de\pi_1\wedge\cdots\wedge\de\pi_{m-k}):=T(fg\,\de\tau_1\wedge\cdots\wedge\de\tau_k\wedge\de\pi_1\wedge\cdots\wedge\de\pi_{m-k}).$$
In the case $k=0$, notice that $\|T\mres g\|=|g|\|T\|$, 
so that $\mathbb{M}(T\mres g)=\int_E |g|\de\|T\|<\infty$ if $g\in \mathcal{B}_{\mathrm{bs}}^{\infty}(E)$. In that case, we will use the simplified notation $gT:=T \mres g$. If $B\subseteq E$ Borel, we will also denote $T\mres B \,:=T\mres\chi_B=\chi_B T$. 
\end{definition}

\begin{remark} \label{rem:extensions} 
Using multiplication by elements in $\mathcal{B}_{\mathrm{bs}}^{\infty}(E)$, several properties established for currents with finite mass can be immediately extended to currents with locally finite mass, in particular 
the alternating property, which justifies the ``wedge'' notation we used from the very beginning (alternatively,
one can borrow results from \cite{UL2}). 
\end{remark}

\begin{definition}[Boundary]\label{def-boundary}
Let $T \in \mathbf{M}_{m,b} (E)$. We define its boundary 
$\partial T: \operatorname{Lip}_{\mathrm{bs}}(E)\times\left[\operatorname{Lip}(E)\right]^{m-1} \to \R$ by
$$\partial T(f \,\de\pi_1\wedge\cdots\wedge\de\pi_{m-1}):=T(\sigma\, \de f\wedge \de\pi_1\wedge\cdots\wedge\de\pi_{m-1}),$$
where $\sigma \in \operatorname{Lip}_{\mathrm{bs}}(E)$ is any function satisfying 
$\sigma|_{\operatorname{supp} f} \equiv 1$.
\end{definition}
Notice that the definition above is well-posed by the locality property (\ref{point3defmass}) of metric currents, as different choices of $\sigma$ do not change the resulting current.
Furthermore, we have the following Leibniz rule (see e.g. \cite[Eq. (11)]{UL2}):
\begin{equation}\label{1Leib}
    \partial (T\mres \rho)=(\partial T)\mres\rho+T\mres\de\rho\qquad \forall\,T \in \mathbf{M}_{m,b} (E),\, \rho\in\operatorname{Lip}(E).
\end{equation}

In general $\partial T$ need not have finite or locally finite mass. This motivates the next definition.

\begin{definition}[Normal and locally normal currents]\label{defnormal}
We denote by $\mathbf{N}_m(E)$ the class of all $T \in \mathbf{M}_m(E)$ and $\partial T \in \mathbf{M}_{m-1}(E)$. Elements of $\mathbf{N}_1(E)$ are called normal $m$-dimensional currents. The class
 $\mathbf{N}_{m,b}(E)\subseteq \mathbf{M}_{m,b}(E)$ of locally normal $m$-dimensional
 currents is defined analogously, by requiring 
 $\partial T\in\mathbf{M}_{m-1,b}(E)$.
\end{definition}

\begin{remark}
 It is easy to see that weak convergence of normal currents is stable under the operation of boundary.
\end{remark}

In the class of normal metric currents we have the following compactness theorem (cfr. \cite[Theorem 5.2]{AK}).

\begin{theorem}[Compactness]\label{comp-theo}
Let $(T_n) \subseteq \mathbf{N}_m(E)$ be such that $\sup_n (\mathbb{M}(T_n)+\mathbb{M}(\partial T_n)) < \infty$, 
satisfying the following equi-tightness condition: for every $\eps>0$ there exists a compact set $K_\eps \subseteq E$ such that
$$\|T_n\|(E \setminus K_\eps)+\|\partial T_n\|(E \setminus K_\eps) < \eps\quad \forall n \in \N.$$
Then there exists a subsequence $(T_{n(k)})$ weakly convergent to $T \in \mathbf{N}_m(E)$.
\end{theorem}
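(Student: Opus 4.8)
The plan is to follow the classical Ambrosio--Kirchheim scheme: produce a candidate limit functional by a diagonal argument on a countable family of arguments, and then verify that this functional defines a normal current. First I would exploit equi-tightness to reduce to a $\sigma$-compact portion of $E$. Writing $K_j:=K_{1/j}$ for the compact sets furnished by the hypothesis and setting $\Sigma:=\bigcup_j K_j$, the set $\Sigma$ is separable and $\sigma$-compact and carries all the mass of the $T_n$ and $\partial T_n$ up to a uniformly controlled error. On each $K_j$ the unit ball of $\operatorname{Lip}(E)$, restricted and equipped with the topology of uniform convergence, is compact by Arzel\`a--Ascoli and hence separable; this furnishes a countable family $\mathcal{D}$ of tuples $(f,\pi_1,\dots,\pi_m)$ which is dense enough that the values of a current on $\mathcal{D}$ determine it on all arguments.

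Next, for each fixed tuple the scalar sequence $T_n(f\,\de\pi_1\wedge\cdots\wedge\de\pi_m)$ is bounded, since $|T_n(f\,\de\pi)|\le\prod_i\operatorname{Lip}(\pi_i)\int_E|f|\,\de\|T_n\|$ and $\sup_n\mathbb{M}(T_n)<\infty$. A diagonal extraction over $\mathcal{D}$ then yields a subsequence $T_{n(k)}$ along which these values converge; I define $T(f\,\de\pi)$ to be the limit for $(f,\pi)\in\mathcal{D}$. Simultaneously, by Prokhorov's theorem the equi-tight, uniformly bounded families $\|T_{n(k)}\|$ and $\|\partial T_{n(k)}\|$ may be assumed to converge narrowly to finite measures $\mu$ and $\nu$. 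I would extend $T$ to arbitrary arguments by approximation from $\mathcal{D}$, using the mass inequality and multilinearity in the $\pi_i$ to estimate differences uniformly in $n$; the convergence $\|T_{n(k)}\|\rightharpoonup^*\mu$ to a \emph{finite} measure guarantees that the exterior tails are uniformly negligible, so that the extension is well defined and independent of the approximating sequence.

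The verification that $T$ is a current then proceeds axiom by axiom. Multilinearity and the locality property (\ref{point3defmass}) pass to the limit at once, and the bound $|T(f\,\de\pi)|\le\prod_i\operatorname{Lip}(\pi_i)\int_E|f|\,\de\mu$ follows from the corresponding estimates for $T_{n(k)}$, giving $\|T\|\le\mu$ and hence $\mathbb{M}(T)\le\mu(E)\le\liminf_k\mathbb{M}(T_{n(k)})<\infty$; thus $T\in\mathbf{M}_m(E)$. Because $\partial$ commutes with weak convergence, one has $\partial T_{n(k)}\rightharpoonup\partial T$, and combining this with $\|\partial T_{n(k)}\|\rightharpoonup^*\nu$ and $\sup_n\mathbb{M}(\partial T_n)<\infty$ yields $\|\partial T\|\le\nu$ and $\mathbb{M}(\partial T)<\infty$, so $T\in\mathbf{N}_m(E)$.

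I expect the main obstacle to be the continuity axiom (\ref{point2defmass}) for the limit functional: given $\pi_i^j\to\pi_i$ pointwise with $\sup_{i,j}\operatorname{Lip}(\pi_i^j)<\infty$, one must show $T(f\,\de\pi^j)\to T(f\,\de\pi)$. Here equi-tightness is essential. Restricting attention to the compact sets $K_\eps$, where $f$ is effectively concentrated up to arbitrarily small mass, pointwise convergence together with the uniform Lipschitz bound upgrades to uniform convergence on $K_\eps$ by Arzel\`a--Ascoli, and the mass estimate against the finite measure $\mu$ then controls the difference on $K_\eps$, while the uniform smallness of the exterior mass absorbs the error on $E\setminus K_\eps$. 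The same equi-tightness mechanism is what makes the extension step of the second paragraph well posed, so it is really the load-bearing hypothesis throughout.
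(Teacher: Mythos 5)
First, a point of comparison: the paper does not actually prove this statement; it is quoted from Ambrosio--Kirchheim and the proof is delegated to \cite[Theorem 5.2]{AK}. Your outline reproduces the scheme of that proof (reduction to a $\sigma$-compact carrier via equi-tightness, diagonal extraction over a countable family of test tuples obtained from Arzel\`a--Ascoli on the compacts $K_\eps$, extension of the limit functional, verification of the axioms), so in spirit you are following the intended route.

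There is, however, one genuine gap, and it sits exactly at the step you yourself flag as delicate. Both your extension step (``using the mass inequality and multilinearity in the $\pi_i$ to estimate differences'') and your verification of the continuity axiom rest on the claim that the mass estimate \eqref{inequalitAKcurrents}, tested against the limit measure $\mu$, controls $T_n(f\,\de\pi)-T_n(f\,\de\pi')$ when the $\pi_i$ are uniformly close on a compact set. It does not: writing the difference as a telescoping sum, each term has the form $T_n(f\,\de\pi_1\wedge\cdots\wedge\de(\pi_i-\pi'_i)\wedge\cdots)$, and \eqref{inequalitAKcurrents} bounds it by $\operatorname{Lip}(\pi_i-\pi'_i)$ times the remaining Lipschitz constants times $\int_E|f|\,\de\|T_n\|$. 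The Lipschitz constant of $\pi_i-\pi'_i$ is merely bounded, not small, so uniform convergence of the $\pi_i$'s buys you nothing through the mass axiom alone. The missing ingredient is the continuity estimate for \emph{normal} currents, obtained by integrating by parts through the boundary and the product rule, which gives a bound of the form $|T(f\,\de\pi)-T(f\,\de\pi')|\leq\sum_{i}\prod_{j\neq i}\operatorname{Lip}(\pi_j)\bigl(\int_E|f|\,|\pi_i-\pi'_i|\,\de\|\partial T\|+\operatorname{Lip}(f)\int_E|\pi_i-\pi'_i|\,\de\|T\|\bigr)$, i.e. control in terms of the sup-distance of the $\pi_i$'s weighted by $\|T\|+\|\partial T\|$. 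This is where the hypothesis $\sup_n\mathbb{M}(\partial T_n)<\infty$ is genuinely load-bearing in the compactness argument --- not only, as your sketch suggests, to conclude a posteriori that the limit is normal --- and without it both the well-posedness of your extension from the countable family $\mathcal{D}$ and axiom (\ref{point2defmass}) for the limit functional remain unproved. Once this estimate is in place, the rest of your argument goes through as in \cite{AK}.
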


Using this reasult we can easily infer a suitable variant, holding for equi-bounded and equi-tight sequences of local currents.

\begin{corollary}[Compactness for local currents]\label{comp-theo-loc}
Let $(T_n) \subseteq \mathbf{N}_{m,b}(E)$ be such that for any $A \subseteq E$ open and bounded the following two conditions hold:
\begin{enumerate}
\item $\sup_n (\|T_n\|(A)+\| \partial T_n\|(A)) < \infty$;
\item for every  $\eps>0$ there exists a compact set $K_\eps \subseteq A$ such that
$$\|T_n\|(A \setminus K_\eps)+\|\partial T_n\|(A \setminus K_\eps) < \eps\quad \forall n \in \N.$$
\end{enumerate}
Then there exists a subsequence $(T_{n(k)})$ weakly convergent to $T \in \mathbf{N}_{m, b}(E)$.
\end{corollary}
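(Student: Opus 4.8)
The plan is to reduce to the finite-mass compactness theorem \cref{comp-theo} by restricting each $T_n$ to an exhausting family of bounded sets and gluing the resulting limits. Fix $x_0\in E$ and, for $k\in\N$, set $B_k:=B_k(x_0)$; if $E$ is bounded these eventually equal $E$ and the statement follows at once from \cref{comp-theo}, so assume $E$ unbounded. Choose cutoffs $\rho_k\in\operatorname{Lip}_{\mathrm{bs}}(E)$ with $0\le\rho_k\le 1$, $\rho_k\equiv 1$ on $B_k$, $\operatorname{supp}\rho_k\subseteq B_{k+1}$ and $\operatorname{Lip}(\rho_k)\le 1$.

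First I would check that, for each fixed $k$, the sequence $(T_n\mres\rho_k)_n$ satisfies the hypotheses of \cref{comp-theo}. Since $\rho_k$ has bounded support, $\mathbb{M}(T_n\mres\rho_k)=\int\rho_k\de\|T_n\|\le\|T_n\|(B_{k+1})$, uniformly bounded in $n$ by hypothesis (1). For the boundary I use the Leibniz rule \eqref{1Leib}, $\partial(T_n\mres\rho_k)=(\partial T_n)\mres\rho_k+T_n\mres\de\rho_k$: the first term has mass at most $\|\partial T_n\|(B_{k+1})$, while by locality $T_n\mres\de\rho_k$ is concentrated on $\overline{B_{k+1}}\setminus B_k$ with mass at most $\operatorname{Lip}(\rho_k)\|T_n\|(B_{k+1})$. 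Both are finite and uniformly bounded in $n$ by (1), so $T_n\mres\rho_k\in\mathbf{N}_m(E)$ with $\sup_n(\mathbb{M}(T_n\mres\rho_k)+\mathbb{M}(\partial(T_n\mres\rho_k)))<\infty$. Equi-tightness follows from (2), since all these masses are dominated by $\|T_n\|+\|\partial T_n\|$ on $B_{k+1}$, so a compact $K_\eps\subseteq B_{k+1}$ as in (2) works.

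Next I would run a diagonal extraction. Applying \cref{comp-theo} successively for $k=1,2,\dots$ produces nested subsequences along the $k$-th of which $T_n\mres\rho_k\rightharpoonup S_k$ for some $S_k\in\mathbf{N}_m(E)$; the diagonal subsequence $(T_{n(k)})$ then satisfies $T_{n(k)}\mres\rho_j\rightharpoonup S_j$ for every $j$. The $S_j$ are consistent: if $\operatorname{supp}f\subseteq B_j$ then $\rho_{j'}\equiv 1$ on $\operatorname{supp}f$ for all $j'\ge j$, whence $T_{n(k)}(f\,\de\pi)=(T_{n(k)}\mres\rho_{j'})(f\,\de\pi)\to S_{j'}(f\,\de\pi)$, so $S_{j'}(f\,\de\pi)=S_j(f\,\de\pi)=\lim_k T_{n(k)}(f\,\de\pi)$. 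This lets me define $T(f\,\de\pi_1\wedge\cdots\wedge\de\pi_m):=S_j(f\,\de\pi_1\wedge\cdots\wedge\de\pi_m)$ for any $j$ with $\operatorname{supp}f\subseteq B_j$, a well-posed functional on $\operatorname{Lip}_{\mathrm{bs}}(E)\times[\operatorname{Lip}(E)]^m$ to which $(T_{n(k)})$ converges weakly by construction.

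Finally I would verify $T\in\mathbf{N}_{m,b}(E)$: multilinearity and the continuity and locality axioms pass to $T$ from the $S_j$, since on every form supported in $B_j$ the functional $T$ agrees with $S_j$; for the mass, any bounded open $A$ lies in some $B_j$ and $|T(f\,\de\pi)|=|S_j(f\,\de\pi)|\le\prod_i\operatorname{Lip}(\pi_i)\int|f|\de\|S_j\|$ for $f$ supported in $A$, giving $\|T\|(A)\le\|S_j\|(A)<\infty$; and for the boundary, choosing $\sigma=\rho_j$ in \cref{def-boundary} yields $\partial T(g\,\de\psi)=T(\rho_j\,\de g\wedge\de\psi)=\partial S_{j'}(g\,\de\psi)$ for $j'$ large, whence the local mass bound from $\partial S_{j'}\in\mathbf{M}_{m-1}(E)$. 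The main obstacle I anticipate is precisely the interaction between restriction and boundary: one cannot cut with the sharp indicator $\chi_{B_k}$, since $\partial(T_n\mres B_k)$ is uncontrolled, so the argument hinges on Lipschitz cutoffs and on bounding the extra Leibniz term $T_n\mres\de\rho_k$ uniformly and tightly via (1)–(2); the diagonal extraction and the gluing of the $S_j$ are then routine.
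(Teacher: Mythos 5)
Your proposal is correct and follows essentially the same route as the paper: Lipschitz cutoffs $\rho_k$ (the paper's $\chi_R$) combined with the Leibniz rule \eqref{1Leib} to bound $\mathbb{M}(T_n\mres\rho_k)+\mathbb{M}(\partial(T_n\mres\rho_k))$ and inherit equi-tightness, then \cref{comp-theo} for each fixed $k$, a diagonal extraction, and gluing of the consistent limits $S_j$ into a single local current. Your closing verification that $T\in\mathbf{N}_{m,b}(E)$ is slightly more explicit than the paper's, but the argument is the same.
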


\begin{proof}
Let $\chi: \R \rightarrow [0,1]$ be a cut-off function s.t. $\chi \equiv 1$ on $(-\infty, 0]$, $\chi \equiv 0$ on $[1,+\infty)$, $\chi$ 1-Lipschitz. Fix $\bar{x} \in E$ and set, for every $R \in \N$, $\chi_R: E \rightarrow [0,1]$, $\chi_R(x):=\chi(d(x, \bar{x})-R)$. We study the sequence $(\chi_R T_n)$, with $R$ fixed. If $B$ is any Borel set, exploiting \eqref{1Leib} and $\| T_n \mres \de \chi_R\| \leq \operatorname{Lip}(\chi_R) \|T_n\|$ we obtain
\begin{equation}\label{stima chiR} \|\chi_R T_n\|(B)+\| \partial(\chi_R T_n)\|(B) \leq 2\|T_n\|(B \cap \overline{B}_{R+1}(\bar{x}))+\|\partial T_n\|(B \cap \overline{B}_{R+1}(\bar{x})).\end{equation}
Then, combining (\ref{stima chiR}) with the hypotheses on $(T_n)$, we are able to apply \cref{comp-theo} to $(\chi_R T_n)$. By a diagonal argument, there exists a subsequence $(T_{n(k)})$ such that $\chi_R T_{n(k)} \rightharpoonup T^R$ for some $T^R \in \mathbf{N}_m(E)$, for all $R \in \N$. We then define a local $m$-dimensional current $T$ by
$$T(f \de \pi_1 \wedge \cdots \wedge d\pi_m):=T^R(f \de \pi_1 \wedge \cdots \wedge \de \pi_m), \quad \operatorname{supp}(f) \subseteq B_R(\bar{x}),$$
where $(f, \pi) \in \operatorname{Lip}_{\mathrm{bs}}(E) \times[\operatorname{Lip}(E)]^m$. Note that, independently from the choice of $R$, we get (as $\chi_R f=f$)
$$T(f \de \pi_1 \wedge \cdots \wedge d\pi_m)=\lim_{k \rightarrow \infty} T_{n(k)} (f \de \pi_1 \wedge \cdots \wedge d\pi_m),$$
which proves both that $T$ is well-defined and the convergence $T_{n(k)} \rightharpoonup T$.
\end{proof}

Later in this work we will need the following estimate, stated and proved for simplicity only in the $1$-dimensional case.
\begin{proposition}\label{deriv-est}
   Let $T \in \mathbf{N}_{1,b}(E)$, $x_0\in E$.  Then $T \mres \overline{B}_r(x_0)\in \mathbf{N}_1(E)$ and
   \begin{equation}\label{eq:blocal}
\| \partial(T \mres \overline{B}_{r}(x_0))\|(\partial B_r(x_0)) \leq \frac \de{\de r} \|T\|(\overline{B}_r(x_0))
   \end{equation}
 for a.e.\, $r>0$.
\end{proposition}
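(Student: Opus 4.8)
The plan is to realize $\partial\bigl(T\mres\overline{B}_r(x_0)\bigr)$, up to the contribution of $\partial T$ inside the ball, as the slice of $T$ induced by the $1$-Lipschitz function $u:=d(\,\cdot\,,x_0)$, and then to bound the mass of this slice by the derivative of the nondecreasing function $m(r):=\|T\|(\overline{B}_r(x_0))$. First I would dispose of the elementary facts. Since $T\in\mathbf{N}_{1,b}(E)$ and $\overline{B}_r(x_0)$ is bounded, both $\|T\|(\overline{B}_r(x_0))$ and $\|\partial T\|(\overline{B}_r(x_0))$ are finite; in particular $\mathbb{M}(T\mres\overline{B}_r(x_0))=\|T\|(\overline{B}_r(x_0))<\infty$ for every $r$, so the only point to settle for membership in $\mathbf{N}_1(E)$ is the finiteness of the boundary mass. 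The function $m$ is nondecreasing, hence differentiable at a.e.\ $r>0$; this is the origin of the ``for a.e.\ $r$'' in the statement. Moreover $\|T\|(\{u=r\})>0$ and $\|\partial T\|(\{u=r\})>0$ can hold only for countably many $r$, since the spheres $\{u=r\}$ are pairwise disjoint and these measures are finite on bounded sets. From now on I work with radii $r$ outside the (Lebesgue-null) union of these exceptional sets, and I use that $\overline{B}_r(x_0)=\{u\le r\}$ and $\partial B_r(x_0)\subseteq\{u=r\}$.

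Next comes the slicing. For $\eps>0$ set $\gamma_\eps:=\psi_{r,\eps}\circ u$, where $\psi_{r,\eps}\colon[0,\infty)\to[0,1]$ equals $1$ on $[0,r]$, is affine with slope $-1/\eps$ on $[r,r+\eps]$ and vanishes on $[r+\eps,\infty)$; thus $\gamma_\eps\in\operatorname{Lip}_{\mathrm{bs}}(E)$ with $\operatorname{Lip}(\gamma_\eps)\le 1/\eps$, and $\gamma_\eps\to\chi_{\overline{B}_r(x_0)}$ pointwise and boundedly. The Leibniz rule \eqref{1Leib} gives
\[
\partial(T\mres\gamma_\eps)=(\partial T)\mres\gamma_\eps+T\mres\de\gamma_\eps .
\]
Because $|\gamma_\eps-\chi_{\overline{B}_r(x_0)}|\le\chi_{\{r<u<r+\eps\}}$ and these annuli shrink to the empty set, the restrictions $T\mres\gamma_\eps$ and $(\partial T)\mres\gamma_\eps$ converge in mass, hence weakly, to $T\mres\overline{B}_r(x_0)$ and $(\partial T)\mres\overline{B}_r(x_0)$ respectively; since the boundary operator is weakly continuous, $T\mres\de\gamma_\eps$ then converges weakly to
\[
S_r:=\partial\bigl(T\mres\overline{B}_r(x_0)\bigr)-(\partial T)\mres\overline{B}_r(x_0).
\]

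It remains to estimate $\mathbb{M}(S_r)$. By locality $T\mres\de\gamma_\eps$ is unchanged if one first restricts $T$ to the closed annulus $\{r\le u\le r+\eps\}$, outside which $\gamma_\eps$ is locally constant; so from $\operatorname{Lip}(\gamma_\eps)\le1/\eps$ and the bound $\|R\mres\de\gamma_\eps\|\le\operatorname{Lip}(\gamma_\eps)\|R\|$ (immediate from \eqref{inequalitAKcurrents}), together with $\|T\|(\{u=r\})=0$, one gets
\[
\mathbb{M}(T\mres\de\gamma_\eps)\le\frac1\eps\,\|T\|(\{r\le u\le r+\eps\})=\frac{m(r+\eps)-m(r)}{\eps},
\]
whose right-hand side tends to $m'(r)$ as $\eps\downarrow0$ at each differentiability point of $m$. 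The lower semicontinuity of mass under weak convergence (\cref{rem_dualmass} with $A=E$) then yields $\mathbb{M}(S_r)\le m'(r)$. In particular $\partial(T\mres\overline{B}_r(x_0))=S_r+(\partial T)\mres\overline{B}_r(x_0)$ has finite mass, so $T\mres\overline{B}_r(x_0)\in\mathbf{N}_1(E)$; and, restricting the mass measures to $\{u=r\}\supseteq\partial B_r(x_0)$ and using $\|\partial T\|(\{u=r\})=0$,
\[
\bigl\|\partial\bigl(T\mres\overline{B}_r(x_0)\bigr)\bigr\|(\partial B_r(x_0))\le\|S_r\|(\{u=r\})+\|\partial T\|(\{u=r\})\le\mathbb{M}(S_r)\le m'(r),
\]
which is exactly \eqref{eq:blocal}.

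The step I expect to be the main obstacle is the passage to the limit defining $S_r$: one must justify the weak convergence of $T\mres\de\gamma_\eps$ while keeping in mind that $\partial(T\mres\overline{B}_r(x_0))$ is a priori only of locally finite mass, and one must be scrupulous about discarding the exceptional radii (where $m$ fails to be differentiable, or where $\|T\|$ or $\|\partial T\|$ charge the sphere) before the identities above become legitimate. A clarifying but ultimately inessential remark is that $S_r$ is in fact concentrated on $\{u=r\}$, since each $T\mres\de\gamma_\eps$ annihilates functions supported in $\{u<r\}$ or in $\{u>r+\eps\}$; this concentration is not needed for \eqref{eq:blocal}, which uses only the trivial bound $\|S_r\|(\{u=r\})\le\mathbb{M}(S_r)$.
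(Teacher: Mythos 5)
Your proposal is correct and follows essentially the same route as the paper: a linear ramp cutoff on the annulus $\{r\le u\le r+\eps\}$, the Leibniz rule \eqref{1Leib}, the bound $\mathbb{M}(T\mres\de\gamma_\eps)\le\eps^{-1}\|T\|(\{r\le u\le r+\eps\})$, lower semicontinuity of mass under weak convergence, and the removal of the radii where $\|T\|$ or $\|\partial T\|$ charges the sphere. The only (cosmetic) difference is that you subtract $(\partial T)\mres\overline{B}_r(x_0)$ before passing to the limit, isolating the slice $S_r$, whereas the paper bounds the full boundary mass first and subtracts $\|\partial T\|(B_r(x_0))$ at the end.
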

\begin{proof}
 By the local finiteness of the boundary mass, $\|\partial T\|(\partial B_{r}(x_{0}))=0$ for almost all $r>0$.\\
 Furthermore, the function $\alpha(r)=\|T\|(\overline{B}_r(x_0)) $ is increasing, hence, it is almost everywhere differentiable. Fix a radius $r>0$ satisfying all these conditions.
Note that $T \mres {\rho_{n}}\stackrel{n\to\infty}{\longrightarrow} T \mres \overline{B}_{r}(x_{0})$ weakly, where 
$$
\rho_{n}(x)=\begin{cases}
1 & \text{if } x\,\in\,B_{r}(x_{0}) \\
1-n(d(x,x_{0})-r)  & \text{if } x\,\in\,B_{r+\frac{1}{n}}(x_{0})\setminus B_{r}(x_{0})\\
0 & \text{else.} 
\end{cases}
$$

\noindent Based on the conditions on
$r$, the semicontinuity of mass with respect to weak convergence, \cref{1Leib} and the fact that
$\operatorname{Lip}(\rho_n)\leq n$, we can conclude that
 
 \begin{align*}
\| \partial(T \mres \overline{B}_r(x_{0}))\|(E) &\leq\liminf_{n \to \infty} \| \partial(T \mres \rho_n )\|(E) \\
&\leq \liminf_{n \to \infty} \| (\partial T )\mres \rho_n \|(E) + \| T \mres(\sigma\de\rho_n) \|(E) \\
&=\|\partial T\|(\overline{B}_r(x_0))+\liminf_{n \to \infty}  \left\| T \mres\left(\sigma\de\rho_n\right)\right\| (E )\\
&=\|\partial T\|(\overline{B}_r(x_0))+\liminf_{n \to \infty}  \left\| T \mres\left(\chi_n\de\rho_n\right)\right\| (E )\\
&=\|\partial T\|(\overline{B}_r(x_0))+ \frac{\de}{\de r} \|T\|(\overline{B}_r (x_{0})). 
\end{align*}
with $\sigma\in \mathcal{B}^\infty_{bs}(E)$ identically equal to 1 on $B_{2r}(x_0)$ and $\chi_n$ the characteristic function of $\overline{B}_{r+1/n}(x_0)\setminus B_r(x_0)$.
This proves that $T \mres \overline{B}_r(x_0)\in \mathbf{N}_1(E)$. By subtracting 
$\|\partial(T \mres \overline{B}_r(x_{0}))\|(B_r(x_0))=\|\partial T\|(B_r(x_0))$ from both sides, we obtain \eqref{eq:blocal}.
\end{proof}

\subsection{Curves in metric spaces} We introduce a few basic definitions about metric curves, i.e. continuous maps $\gamma: J \subseteq \R \rightarrow E$, where $J$ is an interval  (possibly unbounded, not necessarily closed). Any restriction of $\gamma$ to a subinterval $\tilde{J}\subseteq J$ will be called
a subcurve of $\gamma$. A curve $\gamma:J \subseteq\mathbb{R}\to E$ is called an arc if it is injective.

Recall that $\gamma \in \operatorname{AC}([a,b]; E)$ if 
\begin{equation}\label{defac}
d(\gamma(s),\gamma(t))\leq\int_s^t g(r) \de r\qquad\text{whenever $a\leq s\leq t\leq b$}
\end{equation}
for some $g\in L^1(a,b)$, so that in particular Lipschitz maps are absolutely continuous. For the following result, see for example \cite[Theorem 9.2]{ABS}.

\begin{definition}[Metric derivative]
We define the metric derivative of $\gamma$ at the point $t \in \mathring{J}$  as the limit
$$\lim_{h \rightarrow 0} \frac{d(\gamma(t+h), \gamma(t))}{|h|}$$
whenever it exists and, in this case, we denote it by $|\dot{\gamma}|(t)$.
\end{definition}

\begin{theorem}
For any $\gamma \in \operatorname{AC}([a,b]; E)$ the metric derivative $|\dot{\gamma}|(t)$ exists for $\mathcal{L}^1$-a.e. $t \in (a,b)$, it belongs to $L^1(a,b)$ and it is the least admissible $g$ in \eqref{defac}.
\end{theorem}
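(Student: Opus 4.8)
The plan is to produce an explicit candidate for the metric derivative by testing $\gamma$ against the distance functions to a countable dense set, and then to show that this candidate is simultaneously admissible, minimal, and the pointwise limit defining $|\dot\gamma|$. Since $\gamma([a,b])$ is a continuous image of a compact interval it is compact, hence separable, so I may fix a countable set $\{x_n\}$ dense in $\gamma([a,b])$ and define the real-valued functions $\phi_n(t):=d(\gamma(t),x_n)$. The reverse triangle inequality together with \eqref{defac} gives $|\phi_n(s)-\phi_n(t)|\le d(\gamma(s),\gamma(t))\le\int_s^t g$, so each $\phi_n$ lies in $\operatorname{AC}([a,b])$ and satisfies $|\phi_n'|\le g$ a.e. Discarding the countable union of the null sets on which some $\phi_n'$ fails to exist or violates this bound, I set $m(t):=\sup_n|\phi_n'(t)|$, which is measurable and satisfies $m\le g$ a.e.; in particular $m\in L^1(a,b)$, and $m\le g$ a.e.\ for \emph{every} admissible $g$.

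Next I would check that $m$ is itself admissible, so that it is the least admissible $g$. The key density observation is that $d(\gamma(s),\gamma(t))=\sup_n|\phi_n(s)-\phi_n(t)|$: the inequality ``$\ge$'' is again the reverse triangle inequality, while choosing $x_n\to\gamma(t)$ forces $\phi_n(s)\to d(\gamma(s),\gamma(t))$ and $\phi_n(t)\to 0$, which gives ``$\le$''. Combining this identity with $|\phi_n(s)-\phi_n(t)|=\bigl|\int_s^t\phi_n'\bigr|\le\int_s^t m$ yields $d(\gamma(s),\gamma(t))\le\int_s^t m$ for all $s\le t$, so $m$ is admissible and therefore minimal.

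It remains to identify the pointwise metric derivative with $m$ at almost every $t$. For the upper bound I would work at Lebesgue points of $m$: setting $\Phi(t):=\int_a^t m$, admissibility gives $d(\gamma(t+h),\gamma(t))\le|\Phi(t+h)-\Phi(t)|$, whence $\limsup_{h\to 0}d(\gamma(t+h),\gamma(t))/|h|\le m(t)$. For the lower bound I would use, at each $t$ where every $\phi_n'(t)$ exists, the estimate $d(\gamma(t+h),\gamma(t))\ge|\phi_n(t+h)-\phi_n(t)|$, which after dividing by $|h|$ and letting $h\to 0$ gives $\liminf_{h\to 0}d(\gamma(t+h),\gamma(t))/|h|\ge|\phi_n'(t)|$ for each $n$, and hence $\ge m(t)$ after taking the supremum over $n$. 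On the full-measure set of points that are simultaneously Lebesgue points of $m$ and differentiability points of all the $\phi_n$, the two one-sided bounds coincide, so the limit defining $|\dot\gamma|(t)$ exists and equals $m(t)$, completing the proof.

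The main obstacle is coordinating these ingredients on a single full-measure set: the upper bound lives at Lebesgue points of $m$, the lower bound at differentiability points of each $\phi_n$, and the admissibility identity $d(\gamma(s),\gamma(t))=\sup_n|\phi_n(s)-\phi_n(t)|$ is what forces $m$ to capture the \emph{full} metric speed rather than an underestimate. Once the density of $\{x_n\}$ in the image is exploited, the only care required is bookkeeping of a countable union of null sets. In particular, the lower bound avoids any illegitimate interchange of $\liminf_{h\to 0}$ and $\sup_n$, since the single quantity $\liminf_{h\to 0}d(\gamma(t+h),\gamma(t))/|h|$ is bounded below by each $|\phi_n'(t)|$ \emph{separately} before the supremum is taken; the reduction to the separable case is harmless because $\{x_n\}$ need only be dense in $\gamma([a,b])$.
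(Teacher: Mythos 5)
Your proof is correct and is essentially the standard argument: the paper itself does not prove this statement but only cites \cite[Theorem 9.2]{ABS}, and the proof given there proceeds exactly as you do, via the countable family $\phi_n(t)=d(\gamma(t),x_n)$ for $\{x_n\}$ dense in the (compact, hence separable) image, the candidate $m=\sup_n|\phi_n'|$, the identity $d(\gamma(s),\gamma(t))=\sup_n|\phi_n(s)-\phi_n(t)|$ to get admissibility and minimality, and the combination of Lebesgue points of $m$ with differentiability points of all $\phi_n$ to identify $m$ with the metric derivative almost everywhere.
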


\begin{definition}[length]
The length $\ell(\gamma)$ of a curve $\gamma: J \rightarrow E$ is defined as 
$$\ell(\gamma)=\sup \left\{ \sum_{i=0}^{N-1} d(\gamma(t_i), \gamma(t_{i+1})), \, t_0, \ldots, t_N \in J, \, t_0< \cdots < t_N, N \in \N\right\}.$$
\end{definition}

For $\gamma \in \operatorname{AC}_{\mathrm{loc}}(J;E)$, we have the formula $\ell(\gamma)=\int_J |\dot{\gamma}| \de t$.

\begin{definition}[Reparameterization]
If $J,\,\tilde{J}$ are intervals in $\R$, we say that $\tilde{\gamma}: \tilde{J} \rightarrow E$ is a reparametrization of $\gamma: J \rightarrow E$ if there exists an increasing bijection $\phi: \tilde{J} \rightarrow J$ such that $\tilde{\gamma}=\gamma \circ \phi$.
\end{definition}

It will be convenient to localize in the ambient space $E$ the notion of length as follows. When $U \subseteq E$ is an open set, we will make use of the notation $\gamma \, \mres U$ to denote the at most countable family $(\beta_i)_{i \in I}$ of subcurves of $\gamma$ obtained by restricting $\gamma$ to $\gamma^{-1}(U)$, which can be written as a countable disjoint union of open intervals in $J$.

\begin{definition}[Localization in $E$]\label{def:localization_curves}
Given a curve $\gamma:J \rightarrow E$, we define the length of $\gamma$ in a open set $U \subseteq E$ as
$$\ell(\gamma \mres U)=\sum_{i \in I} \ell(\beta_i),$$
where $\gamma \mres U=(\beta_i)_{i \in I}$.
We say that $\gamma$ has locally finite length  in $E$ if $\ell(\gamma \mres U)<\infty$ for any $U\subseteq E$ bounded.
\end{definition}

Note that $\ell(\gamma)$ is invariant under reparametrizations, and hence the same is true for $\ell(\gamma \mres U)$. The localized length can be uniquely extended to a regular Borel measure $F \mapsto \ell(\gamma \mres F)$, $F \subseteq E$ Borel, which in the case $\gamma \in \operatorname{AC}_{\mathrm{loc}}(J;E)$ is given by the formula
$$\ell(\gamma \mres F)=\int_{\gamma^{-1}(F)}|\dot{\gamma}| \de t=\gamma_\#(|\dot{\gamma}| \, \mathcal{L}^1 \mres J)(F) \in [0,\infty].$$

Recall that, for $J\subseteq\R$ Borel,  $\llbracket J \rrbracket\in\mathbf{M}_{1,b}(\R)$ denotes the elementary current associated to
the integration on $J$, namely $\llbracket J \rrbracket(f,\pi)=\int_J f\pi'\de t.$

\begin{definition}\label{defTbeta}
Given $\gamma \in \operatorname{Lip}(J; E)$ with locally finite length in $E$, we associate to it the one-dimensional local current $T_\gamma \in \mathbf{M}_{1,b}(E)$ given by $T_\gamma:=\gamma_\# \llbracket J \rrbracket$, that is
$$T_\gamma(f\de\pi)=\int_J f(\gamma(t)) 
 (\pi\circ\gamma)'(t) \de t, \quad (f, \pi) \in \operatorname{Lip}_{\mathrm{bs}}(E) \times \operatorname{Lip}(E).$$
 \end{definition}

 Assuming without loss of generality $J=(0,1)$, note that if $\gamma$ has finite length then it can be uniquely extended to $[0,1]$ and it holds
 $$\partial T_\gamma=\delta_{\gamma(1)}-\delta_{\gamma(0)}.$$

Using the definition of $\|T_\gamma\|$  one can easily prove that
\begin{equation}
\|T_\gamma\|\leq \gamma_\# \bigl(|\dot\gamma|{\mathcal L}^1\mres J\bigr).
\end{equation}
Moreover, it is worth to mention that the area formula for metric-valued Lipschitz maps (see for instance \cite{AK0}, \cite{AT}) gives
$\gamma_\# \bigl(|\dot\gamma|{\mathcal L}^1\mres J\bigr)= N{\mathcal H}^1\mres\gamma(J)$ with $N(x)={\mathcal H}^0(\gamma^{-1}(x))$.

\section{Cyclic and Acyclic currents}\label{section-acyc}

\begin{definition}\label{defsubcurrent}
   For $T,\,S \in \mathbf{M}_{m,b}(E)$, $S$ is a \textit{subcurrent} of $T$ if
   \begin{equation}\label{eqdefsubcurrent}
   \|T\|=\|S\|+\|T-S\|.
   \end{equation}
   In this case, we denote this relationship by $S\leq T$.
\end{definition}

 \begin{remark}\label{subcurrentsufficopen}
  Note that, to show that \eqref{eqdefsubcurrent} holds, it is enough to check the identity 
  $\|T\|(B)=\|S\|(B)+\|T-S\|(B)$ for all bounded open sets $B$.
  Furthermore, since the inequality $\leq$ always holds, it is enough to check the identity for specific family of 
  bounded open sets exhausting $E$, as the family of open balls with fixed center and diverging radius.
    \end{remark}

\begin{remark}
 For any Borel set $B\subseteq E$, it always holds $T \,\mres B\leq T $.
 \end{remark}
 
 \begin{remark}
        If $R\leq S$ and $S\leq T$, then $R\leq T$. Indeed, the subadditivity of mass gives
        $$ \|T\|=\|S\|+\|T-S\|=\|R\|+\|S-R\|+\|T-S\|\geq \|R\|+\|T-R\|$$
        as well as the validity of the converse inequality.
    \end{remark}

\begin{proposition}\label{sublemma}
    If $R\leq T$ and $S\leq T-R$ then $R\leq R+S$ and $R+S\leq T$.
\end{proposition}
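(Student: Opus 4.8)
The plan is to unwind both conclusions into identities between the associated mass measures and then to play the subadditivity of mass against the two hypotheses. Recall that, by \cref{defsubcurrent}, the hypothesis $R\leq T$ reads $\|T\|=\|R\|+\|T-R\|$, while $S\leq T-R$ reads $\|T-R\|=\|S\|+\|T-R-S\|$. Introducing the current $U:=T-(R+S)\in\mathbf{M}_{m,b}(E)$ and substituting the second identity into the first, I obtain the single additive decomposition
\[
\|T\|=\|R\|+\|S\|+\|U\|
\]
as measures in $\mathbf{M}_{0,b}(E)$. Meanwhile the two statements to be proved unwind, respectively, to $\|R+S\|=\|R\|+\|S\|$ (since $(R+S)-R=S$, this is precisely $R\leq R+S$) and to $\|T\|=\|R+S\|+\|U\|$ (this is precisely $R+S\leq T$).

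The key tool is the subadditivity of mass: whenever $A,B\in\mathbf{M}_{m,b}(E)$, the measure $\|A\|+\|B\|$ satisfies the defining inequality \eqref{inequalitAKcurrents} for $A+B$, so by minimality of the mass one gets $\|A+B\|\leq\|A\|+\|B\|$. Applying this twice yields $\|R+S\|\leq\|R\|+\|S\|$ and, writing $T=(R+S)+U$, also $\|T\|\leq\|R+S\|+\|U\|$. By \cref{subcurrentsufficopen} it is enough to verify the two target identities on bounded open sets $B$, where every mass involved is finite. On such a $B$, combining the two subadditivity estimates with the additive decomposition produces the chain
\[
\|T\|(B)\leq\|R+S\|(B)+\|U\|(B)\leq\|R\|(B)+\|S\|(B)+\|U\|(B)=\|T\|(B).
\]
Since $\|T\|(B)<\infty$, both inequalities are forced to be equalities, giving $\|R+S\|(B)=\|R\|(B)+\|S\|(B)$ and $\|T\|(B)=\|R+S\|(B)+\|U\|(B)$; letting $B$ range over bounded open sets, these pass to the measures and deliver exactly $R\leq R+S$ and $R+S\leq T$.

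There is essentially no deep obstacle here: the argument is a two-fold squeeze between subadditivity and the hypotheses. The only point demanding care is that the cancellation forcing equality in the chain must be carried out on bounded open sets, where the local finiteness of mass guarantees finiteness of every term (one cannot legitimately subtract possibly infinite quantities at the level of the whole space); the reduction to bounded open sets is precisely what \cref{subcurrentsufficopen} licenses.
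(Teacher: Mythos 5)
Your proof is correct and follows essentially the same route as the paper's: substitute the two hypotheses to get $\|T\|=\|R\|+\|S\|+\|T-R-S\|$, then squeeze via two applications of subadditivity of mass so that all intermediate inequalities become equalities. Your extra care in localizing to bounded open sets (via \cref{subcurrentsufficopen}) to justify the cancellation of possibly infinite quantities is a welcome precision but does not change the argument.
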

\begin{proof}
    We have
    \begin{align*}
    \|T\|&=\|T-R\|+\|R\|=\\
    &=\|T-R-S\|+\|S\|+\|R\|\geq\|T-R-S\|+\|R+S\|\geq\|T\|.
    \end{align*}
   In particular, all the intermediate inequalities are equalities, and the conclusion follows.
\end{proof}

\begin{proposition}\label{sublemma2}
 Let $T\in\mathbf{M}_{m,b}(E)$, $(S_n)\subseteq \mathbf{M}_{m,b}(E)$, $S_n\leq T$ for all $n\in\mathbb{N}$, and suppose that  $S_n\rightharpoonup S$  weakly as $n \rightarrow \infty$. Then $S \leq T$ and $\|S_n\|(A) \rightarrow 
 \|S\|(A)$ for any bounded open set $A\subseteq E$.
\end{proposition}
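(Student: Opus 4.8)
The plan is to reduce everything to the lower semicontinuity of mass on bounded open sets (recorded right after Remark~\ref{rem_dualmass}), combined with the superadditivity of $\liminf$ and the subadditivity of mass. First I would fix a bounded open set $A\subseteq E$; since $T\in\mathbf{M}_{m,b}(E)$ and $A$ is bounded, $\|T\|(A)<\infty$, so all the quantities appearing below are finite and the arithmetic of $\liminf$ and $\limsup$ is unambiguous. Because weak convergence is linear and $T$ is held fixed, from $S_n\rightharpoonup S$ I also obtain $T-S_n\rightharpoonup T-S$.

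The core computation runs as follows. Lower semicontinuity of mass on the open set $A$ yields
$$\|S\|(A)\leq\liminf_n\|S_n\|(A),\qquad \|T-S\|(A)\leq\liminf_n\|T-S_n\|(A).$$
On the other hand, $S_n\leq T$ means $\|T\|=\|S_n\|+\|T-S_n\|$ as measures, so $\|S_n\|(A)+\|T-S_n\|(A)=\|T\|(A)$ for every $n$. Adding the two semicontinuity inequalities and using $\liminf a_n+\liminf b_n\leq\liminf(a_n+b_n)$ with the constant sum $a_n+b_n\equiv\|T\|(A)$, I get
$$\|S\|(A)+\|T-S\|(A)\leq\liminf_n\|S_n\|(A)+\liminf_n\|T-S_n\|(A)\leq\|T\|(A).$$
Since subadditivity of mass applied to $T=S+(T-S)$ gives the reverse inequality $\|T\|(A)\leq\|S\|(A)+\|T-S\|(A)$, the whole chain collapses to a string of equalities.

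From this collapse I would read off both conclusions. First, $\|T\|(A)=\|S\|(A)+\|T-S\|(A)$ for every bounded open $A$; by Remark~\ref{subcurrentsufficopen} (the inequality $\leq$ being automatic, and balls of diverging radius exhausting $E$) this is precisely $S\leq T$. Second, because $\|S\|(A)\leq\liminf_n\|S_n\|(A)$ and $\|T-S\|(A)\leq\liminf_n\|T-S_n\|(A)$ while the two sides have equal sums, neither inequality can be strict, so $\liminf_n\|S_n\|(A)=\|S\|(A)$ and $\liminf_n\|T-S_n\|(A)=\|T-S\|(A)$. Finally, from $\|S_n\|(A)=\|T\|(A)-\|T-S_n\|(A)$ and $\limsup_n(c-x_n)=c-\liminf_n x_n$ I obtain $\limsup_n\|S_n\|(A)=\|T\|(A)-\|T-S\|(A)=\|S\|(A)$, which together with the matching $\liminf$ gives $\|S_n\|(A)\to\|S\|(A)$.

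The only delicate point is the bookkeeping that forces every inequality in the chain to be an equality: it hinges on pairing the ``wrong-way'' superadditivity of $\liminf$ with the subadditivity of mass, and on the finiteness of all masses on the bounded set $A$ so that no $\infty-\infty$ indeterminacy arises. Everything else is routine.
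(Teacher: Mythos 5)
Your proposal is correct and follows essentially the same route as the paper: weak convergence of $T-S_n$ to $T-S$, lower semicontinuity of mass on bounded open sets, superadditivity of $\liminf$ against the constant sum $\|S_n\|(A)+\|T-S_n\|(A)=\|T\|(A)$, and subadditivity of mass to collapse the chain, then Remark~\ref{subcurrentsufficopen} to conclude $S\leq T$. The only (harmless) divergence is at the very end: the paper upgrades $\liminf_n\|S_n\|(A)=\|S\|(A)$ to a full limit by running the argument on arbitrary subsequences, while you do it via the identity $\limsup_n\|S_n\|(A)=\|T\|(A)-\liminf_n\|T-S_n\|(A)$; both are valid.
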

\begin{proof}
Consider the sequence $(T-S_n)$, converging weakly to $T-S$. By the lower semicontinuity, for any bounded open set $A$, we have  

\begin{align*}
\|S\|(A)+\|T-S\|(A) & \leq \liminf_{n \rightarrow \infty} \|S_n\|(A)+\liminf _{n \rightarrow \infty} \|T-S_n\|(A)) \\
& \leq \liminf _{n \rightarrow \infty}\left[\|S_n\|(A)+\|T-S_n\|(A)\right]=\|T\|(A)<\infty,
\end{align*}
which means thanks to Remark~\ref{subcurrentsufficopen} that $S \leq T$.
In addition, the two inequalities used in the first line have to be equalities and the first one gives 
$\|S\|(A)=\liminf_n\|S_n\|(A)$. But, since this holds for any subsequence of $(S_n)$, 
we find $\|S\|(A)=\lim_n\|S_n\|(A)$.
\end{proof}

\begin{definition}\label{acyc-def}
$C\in\mathbf{M}_{m,b}(E)$ is called a \textit{cycle} of $T \in \mathbf{M}_{m,b}(E)$ if $C\leq T$ and $\partial C=0$.
We say that $T \in \mathbf{M}_{m,b}(E)$ is \textit{acyclic} if $C=0$ is the only cycle of $T$.    
\end{definition}

\begin{theorem}\label{decompcycacyc}
Every current $T\in \mathbf{M}_{m,b}(E)$ contains a cycle $C$ such that $T-C$ is acyclic.
In particular, any $T\in \mathbf{M}_{m,b}(E)$ can be decomposed as $T=A+C$, where $C$ is a cycle of $T$ and $A$ is acyclic.
\end{theorem}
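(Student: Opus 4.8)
The plan is to produce $C$ as a \emph{maximal} cycle of $T$, measured against a fixed finite weighted mass, and then to use \cref{sublemma} to show that no cycle can survive in the remainder $T-C$. First I would fix a base point $\bar x\in E$ and a continuous, strictly positive weight $w\colon E\to(0,1]$ whose superlevel sets $\{w>t\}$ are bounded for every $t>0$ and with $\int_E w\,\de\|T\|<\infty$; such a $w$ exists, e.g.\ of the form $w(x)=e^{-g(d(x,\bar x))}$ for a continuous increasing $g$ growing fast enough that $\sum_k e^{-g(k)}\|T\|(B_{k+1}(\bar x))<\infty$, using that $\|T\|$ is finite on bounded sets. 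For any subcurrent $S\le T$ set $m(S):=\int_E w\,\de\|S\|$; since $S\le T$ forces $\|S\|\le\|T\|$, we have $m(S)\le m(T)<\infty$. Let $s:=\sup\{m(C):C\text{ is a cycle of }T\}$, a finite nonnegative number.

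Next I would take a maximizing sequence of cycles $C_n$ with $m(C_n)\to s$ and pass to a weak limit. Each $C_n$ lies in $\mathbf{N}_{m,b}(E)$ because $\partial C_n=0$, and on every bounded open set $A$ one has $\|C_n\|(A)\le\|T\|(A)<\infty$ while $\|\partial C_n\|=0$; the standing tightness of $\|T\|$ then yields, for each such $A$ and each $\eps>0$, a compact $K_\eps\subseteq A$ with $\|C_n\|(A\setminus K_\eps)<\eps$ for all $n$. Thus the hypotheses of \cref{comp-theo-loc} hold and, up to a subsequence, $C_n\rightharpoonup C$ with $C\in\mathbf{N}_{m,b}(E)$. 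The limit is a cycle: $C\le T$ by \cref{sublemma2}, and $\partial C=0$ since $\partial C_n(f\,\de\pi_1\wedge\cdots\wedge\de\pi_{m-1})=0$ passes to the limit by weak convergence. The delicate point — and the step I expect to be the main obstacle — is to check that the supremum is \emph{attained}, i.e.\ $m(C)=s$, since \cref{sublemma2} only gives convergence of masses on bounded open sets while $w$ is globally supported. Here I would use the layer-cake identity $m(S)=\int_0^\infty\|S\|(\{w>t\})\,\de t$: each $\{w>t\}$ is a bounded open set, so \cref{sublemma2} gives $\|C_n\|(\{w>t\})\to\|C\|(\{w>t\})$ for every $t>0$, while the uniform bound $\|C_n\|(\{w>t\})\le\|T\|(\{w>t\})$ together with $\int_0^\infty\|T\|(\{w>t\})\,\de t=m(T)<\infty$ permits dominated convergence in the variable $t$, so that $m(C_n)\to m(C)$ and hence $m(C)=s$.

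Finally I would prove that $T-C$ is acyclic. Suppose not; then there is a cycle $D\neq 0$ of $T-C$, that is $D\le T-C$ and $\partial D=0$. Applying \cref{sublemma} with $R=C$ and $S=D$ gives $C\le C+D$ and $C+D\le T$, so $C+D$ is again a cycle of $T$ (its boundary is $\partial C+\partial D=0$). Moreover $C\le C+D$ means $\|C+D\|=\|C\|+\|D\|$, whence $m(C+D)=m(C)+m(D)$. Since $D\neq0$ forces $\|D\|\neq0$ and $w>0$ everywhere, we get $m(D)>0$ and therefore $m(C+D)>m(C)=s$, contradicting the definition of $s$. Hence $C=0$ is the only cycle of $T-C$, i.e.\ $T-C$ is acyclic. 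The last assertion is then immediate: setting $A:=T-C$ exhibits the decomposition $T=A+C$ with $C$ a cycle of $T$ and $A$ acyclic.
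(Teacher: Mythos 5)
Your proof is correct, but it reaches the maximal cycle by a genuinely different mechanism than the paper. You share with the paper the idea of a strictly positive weight with finite integral against $\|T\|$ (the paper's $\zeta$ plays exactly the role of your $w$) and the use of \cref{sublemma} to see that adjoining a nonzero residual cycle would strictly increase the weighted mass; the final contradiction argument is essentially the same. Where you diverge is in the construction of $C$: you run a direct method, taking a maximizing sequence of cycles, extracting a weak limit via \cref{comp-theo-loc}, and then proving attainment of the supremum through the layer-cake formula, \cref{sublemma2} on the bounded open superlevel sets $\{w>t\}$, and dominated convergence in $t$. The paper instead never needs the supremum to be attained: it greedily selects cycles $C_k$ capturing more than half of the remaining supremum $\xi(T-\sum_{i<k}C_i)$, shows by \cref{sublemma} that this supremum at least halves at each step, and sums the resulting series, which converges locally in mass because $\sum_k\|C_k\|\le\|T\|$; acyclicity of the remainder then follows from $\xi(T-\sum_{i\le k}C_i)\to 0$. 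Your route is conceptually cleaner (one extraction, an explicit maximizer, a one-line contradiction at the end) but leans on the compactness theorem for local normal currents, hence on the tightness of $\|T\|$ in an essential way, and requires the extra care about convergence of the weighted masses along the sequence; the paper's series argument is more elementary and self-contained, using only \cref{sublemma}, \cref{sublemma2} and completeness with respect to local mass convergence. Both are valid proofs of the statement.
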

\begin{proof} We argue as in Section~\ref{Sec_main}, introducing an auxiliary function to evaluate the masses of the subcurrents.
Let $\zeta:E\to [0,1]$ be a continuous function with $\int_E\zeta\de \|T\|<\infty$ and with
$c_n:=\inf_{B_n}\zeta>0$ for all $n\in\mathbb{N}$ where, as usual, $B_n:=B_n(\bar x)$ for some fixed $\bar x\in E$. 
Set
$$
\xi(T):=\sup \left\{\int_E\zeta\de\|C\|\,\mid\,C\leq T,\,\partial C=0\right\}.
$$

Now, if we choose a cycle $C_1\leq T$ such that $\int_E \zeta\de\|C_1\|>\xi(T)/2$, then
$\xi(T-C_1)\leq\xi(T)/2$. Indeed, if the converse inequality were true, we could find $C_2\leq T-C_1$
with $\int_E \zeta\de\|C_2\|>\xi(T)/2$. But then, \cref{sublemma} would give $C_1+C_2\leq T$
and 
$$
\int_E\zeta\de \|C_1+C_2\|=\int_E\zeta\de\|C_1\|+\int_E\zeta\de\|C_2\|>\xi(T),
$$
a contradiction. Proceeding in this way (and assuming that the process never stops, otherwise the
proof is trivial) we can find cycles $C_k$ with $C_0=0$, $\xi(T-\sum_0^kC_i)\leq 2^{-k}\xi(T)$ and 
$$\bigl\Vert\sum_{i=0}^kC_i\bigr\Vert=\sum_{i=0}^k\|C_i\|\leq\|T\|\qquad \forall\,k\in\mathbb{N}.$$
By construction we have
$$
\sum_{i=p}^q\|C_i\|(B_n)\leq
c_n^{-1}\sum_{i=p}^q\int_{B_n}\zeta\de \|C_i\|\leq c_n^{-1}\sum_{i=p}^q\xi(T-\sum_{j=0}^{i-1}C_j)\leq
c_n^{-1}\xi(T)\sum_{i=p}^q 2^{-i}
$$
for all $n,\,p,\,q\in\N$ with $p\leq q$.
Hence, since $n$ is arbitrary, the series $\sum_{i=0}^\infty C_i$ defines a cycle $C\in\mathbf{M}_{m,b}(E)$ with $C\leq T$.
Furthermore, Proposition~\ref{sublemma2} gives that $T-C\leq T-\sum_{i=0}^kC_i$ for all $k\in\N$.

Finally, we conclude the proof by demonstrating that $A=T-C$ is acyclic. We do so by noting that if $\tilde C$ is a cycle of $A$, then $\tilde{C}\leq T-\sum_{i=0}^kC_i$ for all $k$, and since $\xi(T-\sum_{i=0}^kC_i)\to 0$ as $k\to\infty$, we conclude that $\tilde C=0$.
\end{proof}

 In the one-dimensional case $m=1$, we can push forward the decomposition of $T \in \mathbf{N}_{1,b}(E)$ by writing the acyclic part as a sum of subcurrents of finite boundary mass, with superposition holding also at the level of the boundaries. To do this we rely on \cref{PS} in a fundamental way.

\begin{theorem}\label{infinite-boundary}
Let $T\in \mathbf{N}_{1,b}(E)$, then there exist currents
$S_i\in \mathbf{N}_{1,b}(E)$ such that $T=\sum_{i=0}^{\infty}S_{i}$, $\partial S_{0}=0$ and 
\begin{equation}\label{requirements}\|\partial S_{m}\|(E)<\infty,\quad S_{m}\leq T-\sum_{i<m}S_i,\quad 
\partial S_m\leq \partial T-\sum_{i<m}\partial S_{i}\quad \quad \forall\,m \in\mathbb{N}.\end{equation}
\end{theorem}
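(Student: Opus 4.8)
The plan is to dispose of the cyclic part first and then peel the acyclic part off one ``boundary layer'' at a time, applying \cref{PS} to restrictions to balls and using a limiting procedure to recover the curves that run off to infinity.

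First I would invoke \cref{decompcycacyc} to write $T=A+C$ with $C$ a cycle and $A$ acyclic, and set $S_0:=C$, so that $\partial S_0=0$ and \eqref{requirements} holds trivially for $m=0$; it then suffices to decompose $A$. Observe that any subcurrent of an acyclic current is acyclic (by transitivity of $\leq$ and the fact that $\partial C'=0$, $C'\leq A$ force $C'=0$), so every remainder produced below stays acyclic. Fixing $\bar x\in E$ and writing $B_n:=B_n(\bar x)$, I select an increasing sequence of radii along which \cref{deriv-est} applies, so that the chosen restrictions $A\mres\overline{B}_r(\bar x)$ lie in $\mathbf N_1(E)$ (finite mass and finite boundary mass), the spheres being $\|\partial A\|$-negligible.

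The construction is inductive. Having defined $S_1,\dots,S_{n-1}$, put $A^{(n)}:=A-\sum_{i<n}S_i$, which is acyclic and locally normal, and $\mu^{(n)}:=\partial A^{(n)}=\partial T-\sum_{i<n}\partial S_i$. For good radii $\rho_k\to\infty$ I apply \cref{PS} to the finite-mass acyclic current $A^{(n)}\mres\overline{B}_{\rho_k}(\bar x)$, obtaining a measure on $\Theta(E)$ concentrated on injective curves whose endpoint distributions are exactly the positive and negative parts of $\partial(A^{(n)}\mres\overline{B}_{\rho_k})$. Each endpoint is then either distributed according to $\mu^{(n)}$ inside $B_{\rho_k}$ (a \emph{genuine} endpoint) or lies on the sphere $\partial B_{\rho_k}$, created by the cut. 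Let $S_n^{(k)}$ be the current associated to the subfamily of curves having at least one genuine endpoint in $\overline{B}_n$. Restricting the representing measure to this subfamily gives $S_n^{(k)}\leq A^{(n)}\mres\overline{B}_{\rho_k}\leq A^{(n)}$, and $\partial S_n^{(k)}$ splits into a part dominated by $\mu^{(n)}$ (the genuine endpoints; no cancellation occurs because $(\mu^{(n)})^+\perp(\mu^{(n)})^-$, and its total mass is controlled by $\|\mu^{(n)}\|(\overline{B}_n)<\infty$ since each selected curve carries a genuine endpoint in $\overline{B}_n$) plus a spurious part supported on the escaping sphere $\partial B_{\rho_k}$. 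Passing to the limit $k\to\infty$, the masses and (for $\rho_k$ large) the boundary masses are locally equibounded by $\|A^{(n)}\|$ and $\|\mu^{(n)}\|$, with equi-tightness inherited from $A$; \cref{comp-theo-loc} yields a weak limit $S_n$, and \cref{sublemma2} gives $S_n\leq A^{(n)}=T-\sum_{i<n}S_i$. In the limit the spurious sphere mass escapes to infinity and vanishes on every bounded set, so $\partial S_n\leq\mu^{(n)}=\partial T-\sum_{i<n}\partial S_i$ and $\|\partial S_n\|(E)<\infty$. Since the whole of $\mu^{(n)}\mres B_n$ is realized by genuine endpoints in $B_n$, all of which are captured at every finite scale, one obtains $\partial S_n\mres B_n=\mu^{(n)}\mres B_n$, whence $\partial A^{(n+1)}\mres B_n=0$.

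It remains to prove $\sum_{i\geq0}S_i=T$. The boundary identity is immediate: as $\partial A^{(n+1)}\mres B_n=0$ and $B_n\uparrow E$, on every bounded set eventually $\sum_{i<n}\partial S_i=\partial T$, with no cancellation by the subcurrent relations. The genuinely delicate point, and the main obstacle, is the mass exhaustion $\|A^{(n)}\|(B_N)\to0$ for each $N$. The decreasing limits $m_N:=\lim_n\|A^{(n)}\|(B_N)$ exist, and one must exclude $m_N>0$: persistent mass in $B_N$ can come only from curves passing through $B_N$ that never acquire a genuine endpoint in any $\overline{B}_n$, i.e. curves escaping to infinity on both ends. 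Making this precise via a further diagonal limit on the representations of $A^{(n)}\mres\overline{B}_{\rho_k}$ produces a nonzero subcurrent of $A$ with vanishing boundary, i.e. a cycle, contradicting acyclicity; hence $m_N=0$. This step, together with the earlier inner limit used to follow the escaping curves (so that, for instance, a half-line is peeled off as a single piece carrying a single boundary point), is where the argument is most technical and where acyclicity is essential.
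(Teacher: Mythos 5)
Your construction of the $S_i$ is essentially the paper's: peel off the maximal cycle as $S_0$ via \cref{decompcycacyc}, then inductively apply \cref{PS} to restrictions of the remainder to balls of good radii (chosen via \cref{deriv-est}), keep only the curves of the representation having an endpoint in $B_n(\bar x)$, and pass to a weak limit via \cref{comp-theo-loc} and \cref{sublemma2} to obtain $S_n\le T-\sum_{i<n}S_i$, $\partial S_n\le\partial T-\sum_{i<n}\partial S_i$, $\|\partial S_n\|(E)\le 2\|\partial T\|(B_n(\bar x))<\infty$, and the key vanishing $\|\partial(A^{(n+1)})\|(B_n(\bar x))=0$. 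Up to this point the two arguments coincide, including the use of the mutual singularity of $(\partial T)^{\pm}$ from \cref{PS}(i) to avoid cancellations.

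The gap is in the step you yourself flag as the main obstacle, namely $\sum_i S_i=T$. You propose to prove the mass exhaustion $\|A^{(n)}\|(B_N)\to 0$ by a further diagonal limit on the representing measures of $A^{(n)}\mres\overline{B}_{\rho_k}$, extracting from the persistent mass a nonzero cycle of $A$; but this extraction is precisely the nontrivial point and is not carried out (one would need to control the selected curve families across the two indices $n$ and $k$ and show that the limiting object is simultaneously a subcurrent of $A$ and boundaryless --- neither is automatic from a diagonal argument on measures). The paper closes this step without returning to the representing measures at all: since each $S_i$ is a subcurrent of the corresponding remainder, the masses add up and $\sum_i\|S_i\|\le\|T\|$, so the series $\sum_i S_i$ converges locally in mass to some $S$ and the remainders converge weakly to $T-S$; \cref{sublemma2} gives $T-S\le T-S_0$, the vanishing $\|\partial A^{(n+1)}\|(B_n(\bar x))=0$ gives $\partial(T-S)=0$, and acyclicity of $T-S_0$ forces $T-S=0$. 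In other words, the ``persistent cycle'' you want to exhibit is simply the weak limit of the remainders themselves, which exists because the partial sums converge locally in mass, and the subcurrent relations you have already established identify it as a cycle of the acyclic part. Your mass exhaustion then follows a posteriori from \cref{sublemma2} rather than being the thing to prove.
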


\FloatBarrier

\begin{proof} We set $S_0=C$, where $C$ is a maximal cycle of $T$ given by \cref{decompcycacyc}.
Now, fix some $\bar{x}\in E$. We will construct inductively the subcurrents $(S_i)_{i \geq 1}$, 
ensuring that they satisfy the requirements (\ref{requirements}) along with the additional condition 
\begin{equation}\label{noboundary}
\|\partial T_{m}\|(B_{m-1}(\bar{x}))=0,
\end{equation}
where $T_{m}:= T-\sum_{i<m}S_i$. Notice that the conditions \eqref{requirements} inductively give 
\begin{equation}\label{eq:inductive}
\sum_{i<m}\|S_i\|\leq\|T\|\qquad\text{and}\qquad\sum_{i<m}\|\partial S_i\|\leq\|\partial T\|,
\end{equation}
so that $\|T_m\|\leq \|T\|$ and $\|\partial T_m\|\leq\|\partial T\|$. \\
Assuming that $S_0,\ldots,S_{m-1}$ have been constructed, we will then find $S_{m}\in\mathbf{N}_{1,b}(E)$ such that 
$$S_{m}\leq T_{m}, \quad \,\partial S_{m}\leq \partial T_{m}, \quad \|\partial S_{m}\|(E)<\infty \quad \textrm{and} \quad \|\partial (T_m-S_m) \|(B_{m}(\bar{x}))=0.$$
Let $r_{m,n}\uparrow\infty$ as $n\to\infty$ be radii such that $T_{m}\mres B_{r_{m,n}}(\bar{x})\in \mathbf{N}_{1}(E)$ for each $n \in \N$ (which exists thanks to \cref{deriv-est}) and that $r_{m,1}\geq m$. \\
For all $n\in\mathbb{N}$ consider the measures $(\eta_{m,n})_n$ obtained by applying \cref{PS} to $T_m\mres{B_{r_{m,n}}}(\bar{x})\in\mathbf{N}_{1}(E)$. That is, $T_m \mres B_{r_{m,n}}(\bar{x})= \int_{\Theta(E)} T_\gamma \de \eta_{m,n}(\gamma)$ with
\begin{align*}
&\|T_m \mres B_{r_{m,n}}(\bar{x})\|=\int_{\Theta(E)} \|T_\gamma\| \de \eta_{m,n}(\gamma), \\
&\| \partial(T_m \mres B_{r_{m,n}}(\bar{x}))\|=\int_{\Theta(E)} \|\partial T_\gamma\| \de \eta_{m,n}(\gamma).
\end{align*}
and $\eta_{m,n}$ is supported in $\Theta(\overline{B}_{r_{m,n}}(\bar{x}))$.
We define the Borel set
$$A_{m}:=\left\{\gamma\in \Theta(E):\, \text{either $\gamma(0)\in B_{m}(\bar{x})$ or $\gamma(1)\in B_{m}(\bar{x})$} \right\}$$
and we consider the currents (see \cref{fig_Smn}.)
$$S_{m,n}:=\int_{A_m} T_\gamma \de \eta_{m,n}(\gamma).$$  \\
\begin{figure}
\includegraphics[width=8cm]{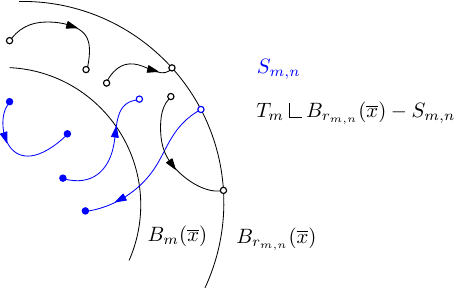}
\captionof{figure}{\bigskip} \label{fig_Smn}
\end{figure}
It is easy to see that
$$
S_{m,n}\leq T_m\mres B_{r_{m,n}}(\bar{x})\leq T_m\quad\text{and}\quad 
\partial S_{m,n}\leq \partial(T_m\mres B_{r_{m,n}}(\bar{x}))$$ 
and also that  $(\partial S_{m,n})\mres B_m(\bar{x})= (\partial T_m)\mres B_{m}(\bar{x})$
for all $n\in\N$ (the equality of boundaries holds because the curves in the complement of $A_m$ 
start and end out of $B_m(\bar{x})$). Also, the local finiteness of $\| \partial T\|$ and the inequality
$\|T_m\|\leq\|T\|$ yield a uniform estimate on the boundary masses:
\begin{equation}\label{uniformbound-boundaries}
\begin{split}
\|\partial S_{m,n} \|(E) &=\int_{A_m} \|\partial T_\gamma\|(E) \de \eta_{m,n}(\gamma)\leq  2\int_{A_m} \|\partial T_\gamma\|(B_m(\bar{x})) \de \eta_{m,n}(\gamma) \\
& \leq 2\|\partial (T_m\mres B_{r_{m,n}}(\bar{x}))\|(B_{m}(\bar{x})) \leq 2\|\partial T\|(B_{m}(\bar{x}))<\infty.
\end{split}
\end{equation}
We observe that, since for all $m,\,n \in \N$ we have
$$\|S_{m,n}\|\leq \|T\| \quad \textrm{and} \quad \|\partial S_{m,n}\|\mres B_{r_{m,n}}(\bar{x})\leq \|\partial T\|\mres B_{r_{m,n}}(\bar{x}),$$
the sequence $(S_{m,n})_n$ satisfies the hypotheses of \cref{comp-theo-loc}, i.e. it is equi-locally bounded in $\mathbf{N}_{1,b}(E)$ and locally equi-tight. Therefore, there exists, up to subsequences, a weak limit that we will denote by $S_{m}$. \\
By \cref{sublemma2}, $S_m\leq T_m$, and by lower semicontinuity of mass and (\ref{uniformbound-boundaries}) we get $\|\partial S_{m}\|(E) \leq 2\|\partial T\|(B_{m}(\bar{x}))<\infty.$ 
Furthermore, fixed any $B$ open bounded, for $n$ sufficiently large we have 
$$(\partial S_{m,n}) \mres B\leq \partial(T_m \mres B_{r_{m,n}}(\bar{x})) \mres B= (\partial T_m) \mres B,$$
and so by applying again \cref{sublemma2} we find that $(\partial S_{m}) \mres B \leq (\partial T_m) \mres B$, hence $\partial S_m \leq \partial T_m$ by \cref{subcurrentsufficopen}. 
We also have
$$\|\partial (T_{m}-S_{m})\|(B_{m}(\bar{x})) \leq \liminf_n \| \partial (T_m-S_{m,n})\|(B_{m}(\bar{x}))=0,$$
that is, $\|\partial T_{m+1}\|(B_{m}(\bar{x}))=0$. \\
Finally, the thesis follows from the fact that the series
$\sum_{i=0}^{\infty}S_{i}$ converges (locally in mass, hence weakly) to a local 1-current $S$, which must coincide with $T$.
Indeed, we have local absolute convergence, since for all $m \in \N$ \eqref{eq:inductive} gives $\sum_{i=0}^{\infty} \|S_i\|\leq \|T\|$. Furthermore, as $T_m \rightharpoonup T-S$, by \cref{sublemma2} we find that $T-S \leq T_1$ and from (\ref{noboundary}) we deduce that $\partial (T-S)=0$. The fact that $T_1=T-S_{0}$ is acyclic then implies that $T=S$.
\end{proof}

 \begin{remark}\label{remark-reduction}
The decomposition $T=\sum_i S_i$ given by \cref{infinite-boundary} satisfies
\begin{equation}\label{double-superpos}
\|T\|=\sum_{i=0}^\infty \|S_i\|, \quad \| \partial T\|=\|\partial(T-S_0)\|=\sum_{i=1}^\infty \| \partial S_i\|.
\end{equation}
In particular, in order to prove \cref{MT}, it is sufficient to show that it is true when $\mathbb{M}(\partial T) < \infty$. Moreover, in the case $\mathbb{M}(T) < \infty$, combining (\ref{double-superpos}) with \cref{PS}, we obtain a measure $\eta$ over $\Theta(\overline{E})$ which decomposes $T$ as in \cref{MT}. Hence, it is necessary to work with the space of open ended curves $\Gamma(E)$ only when $T$ has infinite mass.
 \end{remark}

\section{Proof of the main result}\label{Sec_main}

\subsection{Plan of the proof}

To prove the main result, we will develop a construction which will enable the application of \cref{PS}, which is the already known result for Ambrosio-Kirchheim normal metric currents.
We will accomplish this by turning a given local current $T$ whose boundary has finite mass, which can be assumed with no loss of generality thanks to \cref{infinite-boundary}, into an Ambrosio-Kirchheim normal current, via an ad hoc conformal transformation of the ambient metric.
A key step will be the a priori reduction to the case where the ambient metric space is $\ell^{\infty}$, since this will imply good topological properties of the completion of the space with respect to the new conformal metric.

\subsection{Reduction to the case $\mathbf{E=\ell^\infty}$}\label{riduzioneallinfinito} By our assumption,
see also Remark~\ref{rem-tightness}, the mass of a local current $T\in\mathbf{M}_{1,b}(E)$ is supported on a separable set,
which allows us to reduce the problem to the case of complete separable metric spaces. \\
Now, consider $\ell^\infty=\ell^\infty(\N; \R)$ with the distance $d_\infty$ induced by the usual norm $\|\cdot\|_\infty$. We recall the following classical result (appeared first in \cite{FR}).

\begin{theorem}\label{embedding}
If $(E,d)$ is separable, then there exists an isometric embedding $j: E \rightarrow \ell^\infty$.
\end{theorem}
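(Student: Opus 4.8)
The plan is to invoke the classical Fréchet embedding. First I would exploit separability to fix a countable dense set $\{x_n\}_{n\in\N}\subseteq E$ together with an arbitrary basepoint $\bar x\in E$, and define $j\colon E\to\ell^\infty$ coordinatewise by
$$j(x)_n := d(x,x_n)-d(\bar x,x_n),\qquad n\in\N.$$
The point of subtracting the basepoint term, rather than simply setting $j(x)_n=d(x,x_n)$ as in the Kuratowski embedding for bounded spaces, is to force the image to land in $\ell^\infty$ even when $E$ is unbounded.

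The first thing to check is thus that $j(x)\in\ell^\infty$ for every $x$. By the triangle inequality each coordinate satisfies $|d(x,x_n)-d(\bar x,x_n)|\leq d(x,\bar x)$, so that $\|j(x)\|_\infty\leq d(x,\bar x)<\infty$; this is the only step where the choice of $\bar x$ plays a role.

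Next I would verify that $j$ is an isometry. Since the basepoint contributions cancel, $j(x)_n-j(y)_n=d(x,x_n)-d(y,x_n)$, whence
$$\|j(x)-j(y)\|_\infty=\sup_{n\in\N}\bigl|d(x,x_n)-d(y,x_n)\bigr|.$$
The upper bound $\|j(x)-j(y)\|_\infty\leq d(x,y)$ is immediate from the reverse triangle inequality applied in each coordinate. For the matching lower bound I would use density: choosing indices with $x_{n_k}\to x$, one has $d(x,x_{n_k})\to 0$ and $d(y,x_{n_k})\to d(x,y)$, so $\bigl|d(x,x_{n_k})-d(y,x_{n_k})\bigr|\to d(x,y)$, forcing the supremum to be at least $d(x,y)$. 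Combining the two bounds gives $\|j(x)-j(y)\|_\infty=d(x,y)$, and in particular $j$ is injective, hence an isometric embedding.

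I do not expect a genuine obstacle here: the whole argument rests on the triangle inequality, and the only mildly delicate points are the insertion of the basepoint term to guarantee $j(x)\in\ell^\infty$ and the appeal to density to recover the lower bound $\geq d(x,y)$ — without density one would only obtain an isometric embedding into $\ell^\infty$ over a possibly uncountable index set. Since $\{x_n\}$ is countable, the target is genuinely $\ell^\infty(\N;\R)$, exactly as stated.
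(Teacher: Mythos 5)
Your argument is correct and is precisely the classical Fréchet embedding: the basepoint shift $j(x)_n=d(x,x_n)-d(\bar x,x_n)$ to land in $\ell^\infty$ for unbounded $E$, the reverse triangle inequality for the upper bound, and density for the lower bound are all handled properly. The paper does not prove this statement itself but only cites it as a classical result of Fréchet, and your proof is exactly the standard one behind that citation.
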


Hence, taken $j$ as in \cref{embedding}, we observe that it suffices to show the decomposability of $j_{\#}T\in \mathbf{N}_{1, b}(\ell^{\infty})$. Indeed, from our construction (which relies on \cref{PS}) it will follow that almost every curve in the decomposition of $j_{\#}T$ lies in $\operatorname{supp}(T)$, hence, by completeness, in $E$ (here we naturally identify $x \in E$ with $j(x)$). Thus, from now on, we will work with $(E,d)=(\ell^\infty,d_\infty)$.

\subsection{Conformal transformation and properties}\label{conformal-sec}

Let $T \in \mathbf{N}_{1,b}(E)$ be a locally normal 1-current in $E$. We aim to build a new distance $\delta$ on $E$ in order to be able to apply Theorem~\ref{PS} on $T$, at least when $\| \partial T\|(E)<\infty$.
We define $\delta$ by modifying $d$ in a Riemannian-like manner. For all $x,\,y\,\,\in E$, we set the conformal distance\footnote{Here by $|\cdot|$ we denote the norm $\| \cdot \|_\infty$.}
\begin{equation}\label{defdistancedelta}
\delta(x,y):=    \inf \left\{ \int_0^1 g(|\gamma(t)|)|\dot\gamma|(t) \,\de t\mid\,\,\gamma\,\,\text{$d$-Lipschitz\,},\gamma(0)=x,\,\gamma(1)=y\right\},
\end{equation}
where the  continuous function $g:[0. \infty) \to [0,\infty)$ is built as follows. Choose $r_n \rightarrow \infty$ such that the function $\alpha(r):=\|T\|(\overline{B}_r)$ is differentiable at $r=r_n$ and $\| \partial T\|(\partial B_{r_n})=0$, so that (see \cref{deriv-est})
$T \mres \overline{B}_{r_n}\in\mathbf{N}_1(E)$ and
\begin{equation}\label{r_n}
\| \partial(T \mres \overline{B}_{r_n})\|(\partial B_{r_n}) \leq \alpha'(r_n) < \infty \quad \mbox{for all }\, n.
\end{equation}
We define
\begin{equation}
    \phi(r):=\max\left\{1,\,\|T\|(\overline{B}_{r+1}), \, \max_{r_n \leq r} \alpha'(r_n)  \right\}
\end{equation}
with
\begin{equation}\label{defg} 
g(r)\,:=\frac{1}{\tilde{\phi}(r)2^{r}}.
\end{equation}
where $\tilde{\phi}\geq\phi$ is a continuous function, which we can choose non-decreasing (as $\phi$ is non-decreasing).\\

We will denote by $\overline{E}$ the completion of $E$ with respect to the distance $\delta$ and let
    $$i:(E,d)\to (\overline{E},\delta)$$
denote the inclusion map. In addition, we will use the following notation:
    \begin{itemize}
         \item $I=\overline{E}\setminus i(E)$ denotes the set of points ``at infinity'' resulting from the completion;
        \item $B_{r}(x)$ and ${B}_{r}^\delta(x)$ will be the open balls of radius $r$ and center $x$ in $(\ell^\infty,d)$ and $(\overline{E},\delta)$ respectively; when not specified, it is understood that $x=0$;
        \item $\operatorname{Lip}$ and $\operatorname{Lip}^\delta$ will denote the Lipschitz functions in $(E,d)$ and $(\overline{E},\delta)$ respectively.
        \item $\mathbb{M}$ and $\mathbb{M}_\delta$, as well as $\| \cdot \|$ and $\| \cdot \|_\delta$, 
        will denote the total mass and the mass measures of currents in $(E,d)$ and $(\overline{E},\delta)$ respectively.
        \item Similarly, $\ell(\gamma)$ and $\ell_\delta(\gamma)$ will denote the length of a curve $\gamma$ (and $|\dot{\gamma}|$, $|\dot{\gamma}|_\delta$ its metric derivative) in $(E,d)$ and $(\overline{E},\delta)$ respectively.
    \end{itemize} 

We will usually identify $x$ with $i(x)$ and $E$ with $i(E)$.

\begin{proposition}[Comparison of distances in $E$]\label{Stimeddelta}
The two distances satisfy the following local inequalities:
\begin{itemize}
    \item[(1)] for all $r>0$ there exists $c(r)<\infty$ such that  $$c(r)\delta(x,y)\geq d(x,y)\geq \delta(x,y)\qquad\,\forall\,x,\,y\in B_r;$$
    \item[(2)] for all $z \,\in\, E$, $\eps\in (0,|z|/2)$ one has 
    \begin{equation}\label{inequalityddelta3}
        g(|z|-\eps)d(x,y)\geq \delta(x,y)\qquad\forall x,\,y\,\in\,B_{\eps}(z);
    \end{equation}
    \item[(3)] for all $z \,\in\, E$ and $\eps>0$ small enough, there exists $\overline{\eps}=\overline{\eps}(|z|,\eps)>0$ such that
    \begin{equation}\label{inequalityddelta4}
        \delta(x,y)\geq g(|z|+\overline{\eps})d(x,y)\qquad\forall x,\,y\,\in\,B_{\eps}(z).
    \end{equation}
    Furthermore, $\operatorname{lim}_{\eps\to 0}\overline{\eps}(r,\eps)=0$ for any $r>0$.
\end{itemize}
\end{proposition}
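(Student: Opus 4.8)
The plan is to exploit two elementary features of the weight $g$ defined in \eqref{defg}: it is continuous, strictly positive and \emph{non-increasing}. Indeed, $\tilde\phi$ is non-decreasing and $\tilde\phi\geq 1$, so the product $\tilde\phi(r)2^{r}$ is non-decreasing, whence $g$ is non-increasing and in addition $g\leq 1$. The upper bounds for $\delta$ — that is, the inequality $d\geq\delta$ in (1) and the whole of (2) — are then immediate by testing the infimum in \eqref{defdistancedelta} with the straight segment $\gamma(t)=(1-t)x+ty$. This $\gamma$ is admissible because balls in the normed space $E=\ell^\infty$ are convex, it is $d$-Lipschitz with $|\dot\gamma|\equiv d(x,y)$, and $\ell(\gamma)=d(x,y)$. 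For arbitrary $x,y$ one has $g\leq 1$ along the segment, giving $\delta(x,y)\leq\ell(\gamma)=d(x,y)$; and for $x,y\in B_\eps(z)$ with $\eps<|z|/2$ the segment stays in $B_\eps(z)$, where $|w|>|z|-\eps>0$ forces $g(|w|)\leq g(|z|-\eps)$ by monotonicity, whence $\delta(x,y)\leq g(|z|-\eps)\,d(x,y)$, which is (2).

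For the lower bounds — the inequality $c(r)\delta\geq d$ in (1) and the whole of (3) — I would isolate the single geometric claim: for every $z\in E$, every $\eps>0$, every $x,y\in B_\eps(z)$, and every $d$-Lipschitz competitor $\gamma$ joining $x$ to $y$,
\[
\int_0^1 g(|\gamma(t)|)\,|\dot\gamma|(t)\,\de t\ \geq\ g(|z|+2\eps)\,d(x,y).
\]
To prove it I would look at $R:=\sup_t|\gamma(t)-z|$. If $R\leq 2\eps$, then $|\gamma(t)|\leq|z|+2\eps$ throughout, so $g(|\gamma|)\geq g(|z|+2\eps)$ and the weighted length is at least $g(|z|+2\eps)\,\ell(\gamma)\geq g(|z|+2\eps)\,d(x,y)$. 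If instead $R>2\eps$, set $t_1:=\inf\{t:|\gamma(t)-z|>2\eps\}$ and $t_2:=\sup\{t:|\gamma(t)-z|>2\eps\}$; by continuity $|\gamma(t_i)-z|=2\eps$, the curve remains in $\overline{B}_{2\eps}(z)$ on the two disjoint arcs $[0,t_1]$ and $[t_2,1]$, and by the reverse triangle inequality together with $|x-z|,|y-z|<\eps$, each of these arcs has $d$-length $>\eps$. On $\overline{B}_{2\eps}(z)$ the weight is again $\geq g(|z|+2\eps)$, so the weighted length is at least $g(|z|+2\eps)\cdot 2\eps\geq g(|z|+2\eps)\,d(x,y)$, the last inequality because $d(x,y)<2\eps$.

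Taking the infimum over $\gamma$ in the claim yields $\delta(x,y)\geq g(|z|+2\eps)\,d(x,y)$, which is exactly (3) with $\overline{\eps}:=2\eps$, so that $\overline{\eps}(r,\eps)\to 0$ as $\eps\to 0$ for every $r$. Specializing to $z=0$ and $\eps=r$ gives $\delta(x,y)\geq g(2r)\,d(x,y)$ for all $x,y\in B_r$, i.e. the remaining inequality in (1) with $c(r)=1/g(2r)<\infty$. Thus the nontrivial half of (1) is just the centered case of the same estimate, which makes the argument uniform.

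The only delicate step is this last lower bound: it is where the specific conformal structure of $\delta$ enters, since one must rule out that a long excursion of $\gamma$ into a region where $g$ is exponentially small could drive the weighted length below a fixed multiple of $d(x,y)$. The crossing argument resolves this by charging the cost of the two mandatory traversals of the annulus surrounding $B_\eps(z)$, where $g$ is still bounded below by $g(|z|+2\eps)$, comparable to $g(|z|)$ for small $\eps$. Everything else — the upper bounds and the reduction of (1) to the case $z=0$ — is routine.
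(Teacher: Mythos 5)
Your proof is correct, and for the lower bounds it takes a genuinely different route from the paper. The upper bounds (the inequality $d\geq\delta$ and item (2)) are handled exactly as in the paper, by testing with the straight segment and using that $g\leq 1$ is non-increasing. For item (3), however, the paper defines $\overline{\eps}(r,\eps)$ \emph{implicitly} as the smallest $h>\eps$ with $\int_{r+\eps}^{r+h}g(s)\,\de s\geq 4\eps$, and uses the coarea formula for $t\mapsto|\gamma(t)|$ to show that any competitor leaving $B_{r+\overline{\eps}}$ has weighted length at least $4\eps\geq 2\delta(x,y)$, hence is not competitive; only then does it bound $g$ from below on the surviving competitors. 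Your crossing argument replaces this with a direct dichotomy on $R=\sup_t|\gamma(t)-z|$ and charges the two mandatory traversals of the annulus $\overline{B}_{2\eps}(z)\setminus B_\eps(z)$, which yields the explicit and $r$-independent choice $\overline{\eps}=2\eps$; this is more elementary (no coarea formula), automatically gives $\overline{\eps}(r,\eps)\to 0$, and makes \cref{remark-eps} (monotonicity of $\overline{\eps}$ in $r$, used later in \cref{estimatefinitemass}) trivially true. You also unify the nontrivial half of (1) with (3) by taking $z=0$, $\eps=r$, obtaining $c(r)=1/g(2r)$, whereas the paper proves (1) separately by splitting competitors according to whether they exit $B_{r+1}$ and using $d(x,y)\leq 2r$. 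Both arguments are sound; yours is shorter and more explicit, while the paper's one-crossing coarea estimate is the more flexible tool if one wanted a sharper control of the weighted length in terms of $\int g$ over an interval of radii.
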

\begin{proof}
The inequality $d\geq\delta$ easily follows from the fact that $g\leq1$. To show the converse inequality in (1), we consider an arbitrary $\gamma$ such that $\gamma(0)=x$, $\gamma(1)=y$. If $\operatorname{supp}(\gamma)\subseteq B_{r+1}$, then we have 
    $$\int_{0}^{1}g(|\gamma(t)|) \,|\dot\gamma|(t) \,\de t \geq g(r+1)\,\ell(\gamma)\geq g(r+1)d(x,y).$$
    On the other hand, if there exists $\overline{t}>0$ such that $\gamma(\overline{t})\notin B_{r+1}$, then 
    $$\int_{0}^{1}g(|\gamma(t)|) \,|\dot\gamma|(t) \,\de t\geq \int_{0}^{\overline{t}}g(|\gamma(t)|) \,|\dot\gamma|(t) \,\de t\,\geq $$
    $$\geq g(r+1) \, d(x,\gamma(\overline{t}))\geq g(r+1)\geq \frac{g(r+1)}{2r}d(x,y).$$
    The inequality then follows by choosing $c(r):=\max\{1,2r\}/g(r+1)$. \\
    For the inequality \eqref{inequalityddelta3}: we have
    $$\delta(x,y)\leq \int_{0}^{1}g(|ty+(1-t)x|)|y-x|\,\de t \leq g(|z|-\eps) d(x,y)$$
    where we used that $ty+(1-t)x \in B_{\eps}(z)$ for all $t \in [0,1]$ and that $g$ is decreasing.\\
    For the last inequality we set $r=|z|$ and
    \begin{equation}\label{inf_eps}
    \overline{\eps}(r,\eps):=\inf\bigl\{h>\eps: \, \int_{r+\eps}^{r+h}g(s) \de s\geq 4\eps\bigr\}.
    \end{equation}
    Clearly $\overline{\eps}(r,\eps)$ is well defined for $\eps$ small enough and goes to 0 as $\eps\to 0$.
    Note that, similarly to what we did for one of the inequalities in (1), for any $\gamma$ connecting $x$ and $y$ such that $\operatorname{supp}(\gamma)\cap B^{c}_{r+\overline
    {\eps}(r,\eps)}\neq\varnothing$, we have 
    $$ \int_{0}^{1}g(|\gamma(t)|)|\dot{\gamma}|(t) \de t\,\geq \int_{r+\eps}^{r+h}g(s)\, \de s\geq 4\eps\geq 2d(x,y)\geq 2\delta(x,y),$$
    for some $h>\eps$ as in the definition of the infimum (\ref{inf_eps}). Here the first inequality follows by the co-area formula applied to the Lipschitz function $f(t):=|\gamma(t)|$.\\
    Hence, in the calculation of $\delta(x,y)$ we may restrict our attention to curves that are supported within 
    $B_{r+\overline{\eps}(r,\eps)}$. In particular, we obtain
   \begin{equation*}
   \begin{split}
   \delta(x,y) &\geq g(r+\overline{\eps}(r,\eps)) \inf \left\{ \int_{0 }^{1}|\dot\gamma|(t) \,\de t: \,\gamma(0)=x,\,\gamma(1)=y \right\} \\
   &=g(r+\overline{\eps}(r,\eps))d(x,y). \qedhere
    \end{split}
    \end{equation*} 
\end{proof}

\begin{remark}\label{remark-eps}
The function $\overline{\eps}(r,\eps)$ we found in \cref{Stimeddelta} (defined by \eqref{inf_eps}) is non-decreasing in $r$.
\end{remark}

\begin{remark}
    From the previous estimates, it follows that the topology of $E$ coincides with the topology induced by
    ($\overline{E},\delta)$ on $E$. In particular, $i(\overline{B}_r)$ is complete in $(\overline{E},\delta)$, 
    $i(B_r)$ is an open set in $\overline{E}$ and $i(B_r)\cap I=\varnothing$ for all $r>0$. 
    \end{remark}

As an immediate consequence of \cref{Stimeddelta}, we obtain that the length of a curve in $(E, \delta)$ is actually given by the formula suggested by (\ref{defdistancedelta}).

\begin{proposition}\label{length-formula}
Let $\gamma \in \operatorname{AC}_{\mathrm{loc}}(J; (E, d))$ (thus also $\gamma \in \operatorname{AC}_{\mathrm{loc}}(J; (E, \delta))$), where $J\subseteq \R$ is an interval. Then
\begin{equation}
 \ell_\delta(\gamma)=\int_J g(|\gamma(t)|) \,|\dot{\gamma}|(t) \de t. 
\end{equation}
\end{proposition}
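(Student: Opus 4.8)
The plan is to establish the length formula
\[
\ell_\delta(\gamma)=\int_J g(|\gamma(t)|)\,|\dot\gamma|(t)\,\de t
\]
by proving the two inequalities separately, exploiting the local comparison estimates of \cref{Stimeddelta}. Since both sides are invariant under reparameterization and additive over subintervals, and since any $\gamma \in \operatorname{AC}_{\mathrm{loc}}(J;(E,d))$ can be exhausted by compact subintervals $[a,b] \subseteq J$ (with monotone convergence handling the passage to all of $J$), I would first reduce to the case where $\gamma$ is defined on a compact interval $[a,b]$ and, after reparameterizing by $d$-arclength, assume $\gamma$ is $d$-Lipschitz with $|\dot\gamma| \equiv 1$. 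The key point is that part (1) of \cref{Stimeddelta} guarantees $\gamma \in \operatorname{AC}_{\mathrm{loc}}(J;(E,\delta))$, so $\ell_\delta(\gamma)$ and the metric derivative $|\dot\gamma|_\delta$ are well defined; in fact it suffices to prove the pointwise identity $|\dot\gamma|_\delta(t)=g(|\gamma(t)|)\,|\dot\gamma|(t)$ for a.e.\ $t$, and then integrate.

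For the pointwise metric-derivative identity, I would fix a point $t$ where $|\dot\gamma|(t)$ exists and where $\gamma$ is $\delta$-metrically differentiable, and set $z=\gamma(t)$, $r=|z|$. For small $h$, the points $\gamma(t),\gamma(t+h)$ both lie in a small $d$-ball $B_\eps(z)$, so estimates \eqref{inequalityddelta3} and \eqref{inequalityddelta4} give
\[
g(r+\overline\eps(r,\eps))\,d(\gamma(t),\gamma(t+h)) \leq \delta(\gamma(t),\gamma(t+h)) \leq g(r-\eps)\,d(\gamma(t),\gamma(t+h)).
\]
Dividing by $|h|$ and letting $h\to0$ yields
\[
g(r+\overline\eps(r,\eps))\,|\dot\gamma|(t) \leq |\dot\gamma|_\delta(t) \leq g(r-\eps)\,|\dot\gamma|(t).
\]
Since $g$ is continuous, $\overline\eps(r,\eps)\to0$ as $\eps\to0$ by \cref{Stimeddelta}, so both outer terms converge to $g(r)\,|\dot\gamma|(t)=g(|\gamma(t)|)\,|\dot\gamma|(t)$, giving the desired identity almost everywhere. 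Integrating over $J$ (using $\ell_\delta(\gamma)=\int_J |\dot\gamma|_\delta\,\de t$ from the length-via-metric-derivative formula recalled in the preliminaries) then yields the claim.

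The main obstacle is the bookkeeping around the point $z$ where $|z|$ is small or the curve wanders to the boundary of the ball: estimate \eqref{inequalityddelta3} requires $\eps \in (0,|z|/2)$, so the argument as stated needs $|z|>0$, and one should check that the set of times $t$ with $\gamma(t)=0$ contributes nothing (it has $|\dot\gamma|=0$ a.e.\ on its interior, or one argues directly that $g$ is bounded near $0$). A cleaner route that sidesteps this is to prove the integral inequalities directly rather than the pointwise one: for the upper bound $\ell_\delta(\gamma)\le\int_J g(|\gamma|)|\dot\gamma|\,\de t$, use $\gamma$ itself as a competitor in the infimum \eqref{defdistancedelta} on each small subinterval and sum; for the lower bound, partition $[a,b]$ finely, apply \eqref{inequalityddelta4} on each piece with $z$ a sample point, and pass to the limit using uniform continuity of $g\circ|\gamma|$ on the compact set $\gamma([a,b])$ together with $\overline\eps(r,\eps)\to0$. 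Either way the essential inputs are exactly the three local comparisons of \cref{Stimeddelta} and the continuity and monotonicity of $g$; no new ideas beyond careful limiting are required.
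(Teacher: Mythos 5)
Your proof is correct and follows essentially the same route as the paper's: both reduce to the pointwise identity $|\dot{\gamma}|_\delta(t)=g(|\gamma(t)|)\,|\dot{\gamma}|(t)$ for a.e.\ $t$ and obtain it by squeezing $\delta(\gamma(t),\gamma(t+h))/|h|$ between the two local comparison estimates \eqref{inequalityddelta3} and \eqref{inequalityddelta4} of Proposition~\ref{Stimeddelta}, letting $h\to 0$ and using the continuity of $g$ together with $\overline{\eps}(r,\eps)\to 0$. Your side remark about times where $|\gamma(t)|=0$ (where \eqref{inequalityddelta3} does not directly apply) flags a detail the paper's proof passes over silently, but it is harmless since both metric derivatives vanish at a.e.\ point of that set.
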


\begin{proof}
Since $\ell_\delta(\gamma)=\int_J |\dot{\gamma}|_\delta(t) \de t$, it suffices to show that
\begin{equation}\label{metric-der}
|\dot{\gamma}|_\delta(t)=g(|\gamma(t)|) \, |\dot{\gamma}|(t), \quad \mbox{for a.e. } t \in J.
\end{equation}
An application of \eqref{inequalityddelta3} of \cref{Stimeddelta} with $z=x=\gamma(t)$, $y=\gamma(t+h)$ yields
\begin{equation}\label{length-upp}
\delta(\gamma(t), \gamma(t+h)) \leq g(r-\eps)d(\gamma(t), \gamma(t+h)),
\end{equation}
where $r=|\gamma(t)|$ and $\eps=\eps(h)=d(\gamma(t), \gamma(t+h)) \rightarrow 0$ as $h \rightarrow 0$. 
On the other hand, applying \eqref{inequalityddelta4}, again with $z=x=\gamma(t)$, $y=\gamma(t+h)$, we obtain
\begin{equation}\label{length-low}
\delta(\gamma(t), \gamma(t+h))  \geq g(r+\overline{\eps}(r, \eps)) \, d(\gamma(t), \gamma(t+h)).
\end{equation}
Hence, passing to the limit $h \rightarrow 0$ in (\ref{length-upp}) and (\ref{length-low}) and 
using the continuity of $g$ we get (\ref{metric-der}) at any $t\in J$ where the metric derivatives $|\dot{\gamma}|(t)$, $|\dot{\gamma}|_\delta(t)$ exist.
\end{proof}

One of the key advantages of working with $E=\ell^{\infty}$ is the validity of the following lemma.
\begin{lemma}\label{single-point}
$I=\overline{E} \setminus i(E)$ consists of exactly one point.
\end{lemma}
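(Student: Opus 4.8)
The plan is to identify the points of $I$ with equivalence classes of $\delta$-Cauchy sequences $(x_n)\subseteq E$ that fail to $\delta$-converge in $E$, and to reduce the statement to two facts: first, that any such sequence must escape to infinity, i.e. $|x_n|\to\infty$; second, that $(E,\delta)$ \emph{collapses at infinity}, in the sense that
\begin{equation*}
\omega(R):=\sup\{\delta(x,y)\ :\ |x|\ge R,\ |y|\ge R\}\longrightarrow 0\qquad\text{as }R\to\infty .
\end{equation*}
Granting these, existence of a point at infinity is immediate: writing $e_1=(1,0,0,\dots)$, the sequence $x_n=n\,e_1$ is $\delta$-Cauchy because the radial competitor in \eqref{defdistancedelta} gives $\delta(n e_1,m e_1)\le\int_{\min(n,m)}^{\infty}g\,\de s\to0$, yet it does not $\delta$-converge in $E$ since its norm diverges. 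Uniqueness follows because any two points of $I$ are represented by sequences $(x_n),(y_n)$ with norms tending to $\infty$, so their distance in $\overline E$ is $\lim_n\delta(x_n,y_n)\le\lim_n\omega\bigl(\min(|x_n|,|y_n|)\bigr)=0$, whence they coincide.

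The escape property is a consequence of \cref{Stimeddelta}(1). If $(x_n)$ were $\delta$-Cauchy with a subsequence contained in some ball $\overline B_R$, then on $\overline B_R$ the inequality $d\le c(R)\delta$ makes that subsequence $d$-Cauchy, hence $d$-convergent to some $x\in\overline B_R$ by completeness of $\ell^\infty$; since $\delta\le d$ (because $g\le 1$) it also $\delta$-converges to $x$, and a $\delta$-Cauchy sequence with a $\delta$-convergent subsequence is itself $\delta$-convergent. This would place $[x_n]$ in $i(E)$, contradicting $[x_n]\in I$. Hence no bounded subsequence exists and $|x_n|\to\infty$.

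It remains to prove $\omega(R)\to0$, which is where the infinite dimensionality of $\ell^\infty$ enters decisively. Given $x,y$ with $|x|,|y|\ge R$, fix a large parameter $M$ and two distinct coordinates $N\ne N'$, and connect $x$ to $y$ by a three-leg piecewise linear (hence $d$-Lipschitz) competitor in \eqref{defdistancedelta}:
\begin{itemize}
\item a ramp moving only coordinate $N$ of $x$ monotonically from $x_N$ to $\operatorname{sign}(x_N)\,M$, yielding a ``hub'' $x'$;
\item the segment joining $x'$ to the analogous hub $y'$ obtained from $y$ by pumping coordinate $N'$ up to $\operatorname{sign}(y_{N'})\,M$;
\item a ramp bringing coordinate $N'$ of $y'$ back down to $y_{N'}$, landing at $y$.
\end{itemize}
The sign choice on each ramp guarantees that the absolute value of the moving coordinate increases monotonically, so the sup-norm is non-decreasing along the ramp and never drops below $|x|\ge R$ (no dipping through a region of small norm). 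Along the middle segment the two pumped coordinates vary affinely, roughly as $(1-t)M$ and $tM$, so their maximum, and hence the sup-norm, stays above $(M-|x|-|y|)/2\ge M/4$ once $M$ is large. Using $g(r)\le 2^{-r}$, each ramp contributes at most $|x|\,2^{-|x|}+\int_{R}^{\infty}g\,\de s$ (splitting according to whether the pumped coordinate already dominates the sup-norm), while the middle segment, of $d$-length at most $2M$ and run at norm $\ge M/4$, contributes at most $2M\,2^{-M/4}$. Letting $M\to\infty$ for fixed $x,y$ kills the middle term and yields $\delta(x,y)\le 2\bigl(\sup_{\rho\ge R}\rho\,2^{-\rho}+\int_R^\infty g\,\de s\bigr)$, a bound independent of $x,y$ that tends to $0$ as $R\to\infty$ because $\int_0^\infty g<\infty$.

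The main obstacle is precisely the middle leg: joining two points of large norm while keeping the norm large throughout. In one dimension this is impossible—$+\infty$ and $-\infty$ give genuinely distinct endpoints—so the argument is forced to use additional directions. The device of pumping two \emph{different} coordinates up to $M$ exploits that $\ell^\infty$ always has fresh coordinates available and that, in the sup norm, the maximum of the two affinely varying pumped coordinates cannot be small simultaneously; this is exactly the ``key advantage'' of working in $\ell^\infty$. The only routine points to check are that the metric derivative in the sup norm along each leg equals the rate of change of the single moving coordinate (respectively the speed of the segment), and the elementary splitting of the ramp integral by whether the pumped coordinate dominates; both are straightforward.
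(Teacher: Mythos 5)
Your proof is correct and follows essentially the same strategy as the paper's: first reduce to sequences with $\|x_n\|_\infty\to\infty$ via the local comparability of $d$ and $\delta$ (\cref{Stimeddelta}), then show that points of large sup-norm are uniformly $\delta$-close by an explicit piecewise-linear competitor in \eqref{defdistancedelta} that pumps a fresh coordinate so the sup-norm never drops while $g$ stays summably small. The only difference is in the geometry of the connecting path --- the paper joins an arbitrary far-out point $a$ to the reference points $c_N=(N,0,0,\dots)$ by a three-leg curve parked at a coordinate $a^K$ with $|a^K|>N$, whereas you join two arbitrary far-out points symmetrically through hubs at an auxiliary level $M\to\infty$ --- but both exploit exactly the same feature of $\ell^\infty$.
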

\begin{proof}
We first  consider  the sequence $(c_n)\subseteq E=\ell^{\infty}$, where $c_n=(n,0,0,\ldots)$.
The sequence is Cauchy in $(E,\delta)$ for any $g$ as in \eqref{defg}, since (for $m>n$)
$$\delta(c_{n},c_{m})\leq \int_n^m g(t)\,\de t \leq \int_n^\infty 2^{-t} \de t,$$
therefore it defines a point $x_\infty\in I$.
We are now going to show that any sequence $(a_n)$ converging to an element 
$\tilde{x}_\infty\in I$ gets arbitrarily close to the sequence $(c_n)$, this will provide the equality
$x_\infty=\tilde{x}_\infty$.
Let $(a_n)$ be such a sequence, with $a_n=(a_{n}^{1},a_{n}^{2},\dots)$. Since by \cref{Stimeddelta} the topologies coincide on $d$-balls, we may assume $\operatorname{lim}_{n\to\infty}\|a_n\|_\infty=\infty$.\\
It will then be enough to show that for any $a=(a^{n})_n$ such that $N<\|a\|_{\infty}<2N$ it holds $\delta(a, c_N) \leq \eps(N)$, for some $\eps(N) \rightarrow 0$ as $N\rightarrow \infty$.
Clearly we can find  $K\in\mathbb{N}$ such that $N<\|a^{K}\|<2N$.  
We will estimate $\delta(a,c_{N})$ through the concatenation of the curves $\gamma_{i}=(\gamma_{i}^{j}):[0,1]\to \ell^{\infty}$ $i=1,2,3$ which connects the two points.\\
We set
\begin{align*}
&\gamma_{1}^{m}(t) =
\begin{cases}
(1-t)a^{m} & \text{if } m \neq K, K+1 \\
a^{K} & \text{if } m = K \\
tN + (1-t)a^{K+1} & \text{if } m = K+1
\end{cases}
\\
&\gamma_{2}^{m}(t) =
\begin{cases}
(1-t)\gamma_{1}^{1}(1)+tN & \text{if } m =1 \\
\gamma_{1}^{m}(1) & \text{else } \\
\end{cases}
\\
&\gamma_{3}^{m}(t) =
\begin{cases}
N & \text{if } m =1 \\
(1-t)\gamma_{1}^{m}(1) & \text{else. } \\
\end{cases}
\end{align*}
We notice that $\gamma_{1}(0)=a$, $\gamma_{1}(1)=\gamma_{2}(0)$, $\gamma_{2}(1)=\gamma_{3}(0)$ and $\gamma_{3}(1)=c_{N}$, furthermore $\|\gamma_{i}(t)\|_{\infty} \geq N$ and $|\dot{\gamma_{i}}|(t)\leq 3N$ for all $t,i$.\\
We see that $\gamma_1 \oplus \gamma_{2} \oplus \gamma_{3}$ is a Lipschitz curve connecting $a$ and $c_{N}$, therefore we can conclude the proof with the following estimate:
$$
\delta(a, c_N)\leq\sum_{i=1}^3\int_0^1 g(\|\gamma_i(t)\|_\infty) |\dot{\gamma}_i(t)| \, \de t 
\leq 9N \, g(N) \to 0 \quad \text{as} \quad N \to \infty.\qedhere
$$
\end{proof}

 The unique ``point at infinity'' added by completing the space $(E, \delta)$ will be denoted by $x_\infty$.

\begin{lemma}\label{boundtoinfinity}
For all $r>0$, $x\in\partial B_r$ there exists a curve $\gamma_{x}:[0,1]\to \overline{E}$ such that $\gamma_{x}(0)=x$, $\gamma_{x}(1)=x_{\infty}$, $\operatorname{supp}\gamma_{x}\subseteq B_{r}^{c}$ and 
$\|T_{\gamma_{x}}\|_{\delta}(\overline{E})=\ell_{\delta}(\gamma_{x})=\int_r^\infty g(s)\,\de s$.
\end{lemma}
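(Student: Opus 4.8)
The plan is to take for $\gamma_x$ the outward radial ray through $x$, suitably reparametrized. Since $\|x\|_\infty=r>0$ we have $x\neq 0$, so we may first define $\eta\colon[r,\infty)\to E=\ell^\infty$ by $\eta(s):=\tfrac{s}{r}\,x$. Then $\|\eta(s)\|_\infty=s$ and $d(\eta(s),\eta(s'))=\tfrac{|s-s'|}{r}\|x\|_\infty=|s-s'|$, so $\eta$ is a unit-speed ray in $(E,d)$, i.e.\ $|\dot\eta|(s)\equiv 1$, whose $d$-norm increases monotonically from $r$ to $\infty$. In particular $\eta(s)\in B_r^c$ for every $s\ge r$. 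For the $\delta$-length I would apply \cref{length-formula}: since $|\eta(s)|=s$ and $|\dot\eta|(s)=1$,
\begin{equation*}
\ell_\delta(\eta)=\int_r^\infty g(|\eta(s)|)\,|\dot\eta|(s)\,\de s=\int_r^\infty g(s)\,\de s,
\end{equation*}
which is finite because $g(s)\le 2^{-s}$. Being finite, this length lets me reparametrize $\eta$ by normalized $\delta$-arclength to a Lipschitz curve on $[0,1)$, and I set $\gamma_x(1):=x_\infty$; as length is reparametrization invariant, $\ell_\delta(\gamma_x)=\int_r^\infty g(s)\,\de s$.

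That the endpoint is well defined and equals $x_\infty$ follows from \cref{single-point}: as $s\to\infty$ one has $\|\eta(s)\|_\infty\to\infty$, so $\eta(s)$ eventually leaves every $d$-ball, and the Cauchy estimate used in the proof of \cref{single-point} (of the form $\delta(\,\cdot\,,c_N)\le 9N\,g(N)\to 0$) forces $\eta(s)\to x_\infty$ in $(\overline E,\delta)$, the unique point of $I$. Since $x_\infty\in I$ while $i(B_r)\cap I=\varnothing$, the endpoint also lies in $B_r^c$, and hence $\operatorname{supp}\gamma_x\subseteq B_r^c$, as required.

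The heart of the matter is the mass identity $\|T_{\gamma_x}\|_\delta(\overline E)=\ell_\delta(\gamma_x)$, for which a priori only ``$\le$'' is available from the general bound $\|T_{\gamma_x}\|_\delta\le(\gamma_x)_\#(|\dot\gamma_x|_\delta\,\mathcal L^1)$ recalled after \cref{defTbeta}. The key structural fact I would exploit is that the radial ray is a $\delta$-\emph{geodesic}: for any $d$-Lipschitz competitor $\sigma$ joining $\eta(s_1)$ to $\eta(s_2)$, setting $u(t):=|\sigma(t)|$ and using $|u'|\le|\dot\sigma|$ (a consequence of $|\,\|\sigma(t)\|-\|\sigma(t')\|\,|\le d(\sigma(t),\sigma(t'))$) one gets, for $s_1\le s_2$,
\begin{equation*}
\ell_\delta(\sigma)=\int g(u)\,|\dot\sigma|\,\de t\ge\int g(u)\,|u'|\,\de t\ge\Bigl|\int g(u)\,u'\,\de t\Bigr|=\int_{s_1}^{s_2}g(s)\,\de s,
\end{equation*}
so that $\delta(\eta(s_1),\eta(s_2))=\int_{s_1}^{s_2}g(s)\,\de s$ and likewise $\delta(\eta(s),x_\infty)=\int_s^\infty g$. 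Consequently $\pi:=\delta(\,\cdot\,,x)$, which is $1$-Lipschitz for $\delta$, satisfies $\pi(\gamma_x(t))=\int_0^t|\dot\gamma_x|_\delta$, whence $(\pi\circ\gamma_x)'=|\dot\gamma_x|_\delta$ a.e. Testing through \cref{rem_dualmass} on a bounded exhaustion with the single pair $(f,\pi)$, $0\le f\le 1$ of bounded support approximating $\chi_{\overline E}$, gives
\begin{equation*}
\|T_{\gamma_x}\|_\delta(\overline E)\ge\int_0^1 |\dot\gamma_x|_\delta(t)\,\de t=\ell_\delta(\gamma_x),
\end{equation*}
which together with the reverse inequality (equivalently, the area formula with multiplicity $N\equiv 1$, by injectivity of the strictly monotone ray) yields the identity. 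I expect this mass equality to be the main obstacle, since the general theory supplies only the upper bound, and it is precisely the geodesic property of the radial ray that provides the matching lower bound.
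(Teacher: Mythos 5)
Your proposal is correct and follows essentially the same route as the paper: the paper's proof also takes the outward radial ray $\tilde\gamma(t)=\frac{1}{1-t}x$ (your $\eta$ up to reparametrization), notes its continuity up to $x_\infty$ at $t=1$, and computes $\ell_\delta$ via \cref{length-formula} to get $\int_r^\infty g(s)\,\de s$. The only difference is that the paper treats the mass identity $\|T_{\gamma_x}\|_\delta(\overline{E})=\ell_\delta(\gamma_x)$ as immediate from the curve being an arc, whereas you supply an explicit (and correct) lower-bound argument via the geodesic property of the ray and the test function $\pi=\delta(\cdot,x)$ — a useful elaboration, but not a different approach.
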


\begin{proof}
The statement follows with the arc
$$\tilde\gamma(t) =
\begin{cases}
\frac{1}{1-t}x & \text{if } t \,\in\,[0,1), \\
x_{\infty} & \text{if } t =1.
\end{cases}$$
It is easy to see that $\tilde\gamma$ is continuous in $[0,1]$ with respect to
$(\overline{E},\delta)$ and locally absolutely continuous in $[0,1)$ with respect to $d$. Then, using \cref{length-formula}, one finds that 
$$
\ell_{\delta}(\tilde\gamma_x)=\int_0^1 g\left(\frac{|x|}{1-t}\right)\frac{|x|}{(1-t)^2}\de t= \int_r^\infty g(s)\,
\de s.\qedhere$$

\end{proof}
\begin{remark}
 \cref{single-point} and \cref{boundtoinfinity} were the only instances where we explicitly relied on the specific topological structure of $\ell^{\infty}$. The rest of the construction we performed above would still be meaningful in any geodesic metric space $(E,d)$ (which somewhow justifies the use of this generic notation for $(\ell^\infty, d_\infty)$).

\end{remark}
We now examine how the mass measure of local 1-currents is affected by the change of metric, exploiting \cref{Stimeddelta}.

\begin{proposition}\label{estimatefinitemass}
    Let $S\in\mathbf{M}_{1,b}(E)$, then 
 $$\|S\|_\delta=g(|x|) \|S\|.$$
\end{proposition}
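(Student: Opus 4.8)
The plan is to establish the two one-sided bounds $\|S\|_\delta\le g(|\cdot|)\,\|S\|$ and $g(|\cdot|)\,\|S\|\le\|S\|_\delta$ separately and then to combine them by elementary measure comparison. Throughout I work only on $d$-bounded open sets $A$ (every admissible test $0$-form has $d$-bounded support, and $\|S\|$ is locally finite there), fixing $R$ with $\bar A\subseteq B_R$; the common device is to localise via a Lipschitz partition of unity $\{\psi_k\}$, subordinate to a countable cover of a bounded neighbourhood of $\bar A$ by small balls $B_\eps(z_k)$ with $\sum_k\psi_k\equiv1$ on $\bar A$, and to transfer Lipschitz bounds between $d$ and $\delta$ through \cref{Stimeddelta}.

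For the upper bound I take any $f\in\operatorname{Lip}_{\mathrm{bs}}(E)$ and $\pi$ with $\operatorname{Lip}^\delta(\pi)\le1$, write $f=\sum_k f\psi_k$ by multilinearity, and note that estimate (2) of \cref{Stimeddelta} gives, on $B_\eps(z_k)$, $|\pi(x)-\pi(y)|\le\delta(x,y)\le g(r_k-\eps)\,d(x,y)$ with $r_k:=|z_k|$, so $\pi$ is $g(r_k-\eps)$-Lipschitz for $d$ there. Extending $\pi|_{\operatorname{supp}(f\psi_k)}$ to a global $g(r_k-\eps)$-Lipschitz function by McShane and invoking the locality property~\eqref{point3defmass} (which makes $S(f\psi_k\,\de\pi)$ depend on $\pi$ only through its values on $\operatorname{supp}(f\psi_k)$), the mass inequality~\eqref{inequalitAKcurrents} for $\|S\|$ yields $|S(f\psi_k\,\de\pi)|\le g(r_k-\eps)\int|f\psi_k|\,\de\|S\|$. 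Summing and using $g(r_k-\eps)\le g(|x|)+\omega(2\eps)$ on $B_\eps(z_k)$, with $\omega$ the modulus of continuity of $g$ on $[0,R+1]$, gives $|S(f\,\de\pi)|\le\int|f|\,g(|x|)\,\de\|S\|+\omega(2\eps)\int|f|\,\de\|S\|$; letting $\eps\to0$ and rescaling in $\pi$ shows that $g(|\cdot|)\|S\|$ is admissible in~\eqref{inequalitAKcurrents} for the metric $\delta$. Hence $S\in\mathbf{M}_{1,b}(E,\delta)$ and $\|S\|_\delta\le g(|\cdot|)\,\|S\|$; in particular $\|S\|_\delta$ is locally finite and $\|S\|_\delta\ll\|S\|$.

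For the reverse bound I run the symmetric argument on $\|S\|(A)$ via the dual description of mass in \cref{rem_dualmass}: given a competing family $\{(f_\lambda,\pi_\lambda)\}$ with $\operatorname{Lip}(\pi_\lambda)\le1$ and $\sum_\lambda|f_\lambda|\le\chi_A$, estimate (3) of \cref{Stimeddelta} now gives $d(x,y)\le\delta(x,y)/g(r_k+\overline\eps)$ on $B_\eps(z_k)$, so $\pi_\lambda$ is $1/g(r_k+\overline\eps)$-Lipschitz for $\delta$ there; the same McShane-plus-locality step, applied in the $\delta$-metric against the (now available) measure $\|S\|_\delta$, bounds each piece by $\frac{1}{g(r_k+\overline\eps)}\int|f_\lambda\psi_k|\,\de\|S\|_\delta$. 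Summing and using $\frac{1}{g(r_k+\overline\eps)}\le\frac{1}{g(|x|)}+\omega'$ on $B_\eps(z_k)$ — where $\omega'$ is the modulus of continuity of $1/g$ on $[0,R+1]$, finite because $g$ is continuous and strictly positive — and replacing every $\overline\eps(r_k,\eps)$ by the single $\overline\eps(R+1,\eps)\to0$ (legitimate by the monotonicity in $r$ of \cref{remark-eps}), I obtain $\|S\|(A)\le\int_A\frac{1}{g(|x|)}\,\de\|S\|_\delta$ after sending $\eps\to0$. This reads $\|S\|\le\frac{1}{g(|\cdot|)}\|S\|_\delta$ as measures; multiplying by the nonnegative factor $g(|\cdot|)$ gives $g(|\cdot|)\|S\|\le\|S\|_\delta$, and together with the previous step $\|S\|_\delta=g(|\cdot|)\|S\|$.

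The delicate point is the uniform control of the two errors: the comparison constants $g(r_k-\eps)$ and $1/g(r_k+\overline\eps)$ both lie on the unfavourable side of $g(|x|)$ and $1/g(|x|)$, so the estimates close only because $g$ and $1/g$ are uniformly continuous on the compact interval $[0,R+1]$ and because $\overline\eps$ admits a bound uniform in the centre, guaranteed by \cref{remark-eps}. A minor separate point is the origin, where estimate (2) is stated only for $\eps<|z|/2$: there one also covers a ball $B_\eps(0)$, on which the straight-segment bound from~\eqref{defdistancedelta} gives $\delta\le g(0)\,d\le(g(|x|)+\omega(\eps))\,d$ for the upper estimate and the co-area computation of \cref{Stimeddelta}(3) supplies the matching lower estimate, so this ball requires no new idea.
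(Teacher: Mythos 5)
Your proof is correct and follows essentially the same route as the paper's: both directions rest on localizing with a Lipschitz partition of unity subordinate to a cover by small balls, transferring the Lipschitz constant of the test function between $d$ and $\delta$ via \cref{Stimeddelta} (with \cref{remark-eps} for the reverse inequality), and invoking locality together with a Lipschitz extension against the dual description of mass in \cref{rem_dualmass}. The only organizational difference is that the paper first proves the two-sided constant comparison \eqref{stimacorone} on annuli (using tightness of $\|S\|$ to reduce to a finite cover of a compact set) and then recovers the density $g(|x|)$ by a Riemann-sum and dominated-convergence passage, whereas you absorb the continuity of $g$ directly into the small-ball estimate through its modulus of continuity; the two devices are interchangeable.
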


\begin{proof}
It is enough to prove the following estimate: for any $0<R_{1}<R_{2}$,
\begin{equation}\label{stimacorone}
 g(R_{2}) \, \|S\|(\overline{B}_{R_{2}}\setminus \overline{B}_{R_{1}})\leq \|S\|_\delta(\overline{B}_{R_2} \setminus \overline{B}_{R_1}) \leq g(R_{1}) \, \|S\|(\overline{B}_{R_{2}}\setminus \overline{B}_{R_{1}}).
 \end{equation}

 Indeed, assume that (\ref{stimacorone}) holds. Applying it to $S \mres B$, where $B$ is an arbitrary bounded Borel set in $E$, let us say $B \subseteq \overline{B}_R$, we obtain
 \begin{equation*}
 \begin{split}
  \|S\|_\delta(B)&=\sum_{i=0}^n \|S\|_\delta(B \cap \overline{B}_{R \frac in} \setminus \overline{B}_{R \frac {i-1}n}) \geq \sum_{i=0}^n g\left(R \frac in\right)\|S\|(B \cap \overline{B}_{R \frac in} \setminus \overline{B}_{R \frac {i-1}n}) \\
  & = \int g\left(\frac Rn \Bigl\lceil \frac {|x|}{R/n} \Bigr\rceil \right) \chi_B(x) \de \|S\|(x),
 \end{split}
 \end{equation*}
 for all $n \in \N$, which passing to the limit $n \rightarrow \infty$ (by dominated convergence) yields
 $$\|S\|_\delta(B) \geq \int_B g(|x|) \de\|S\|(x),$$
 proving that $\|S\|_\delta \geq g(|x|)\|S\|$. The converse inequality can be shown in the same way, using the other estimate in (\ref{stimacorone}). \\
 Let us now prove (\ref{stimacorone}) using directly the definition of mass.
 Fix any $0<\eps\leq R_1/2$. By the tightness assumption on $\|S\|$, for any $\eps>0$, we can find a compact set $K\subseteq B_{R_2+\eps}\setminus \overline{B}_{R_{1}+\eps}$ such that 
    $$\|S\|_{\delta}(B_{R_2+\eps}\setminus (\overline{B}_{R_{1}+\eps}\cup K))\leq \eps .$$
Consider now a cover of $K$ made by open balls $B_{\eps}(x_n)$, $1\leq n\leq N$, with center $x_n\in K$.
Let $\{\phi_n\}$, $0\leq n\leq N$, be a Lipschitz partition of unity in $E$ associated to the covering 
$\overline{E}\setminus\bigcup_{n=1}^{N}\overline{B}_{\eps}(x_{n})$, $B_{2\eps}(x_1),\ldots, B_{2\eps}(x_N)$ and note that $\sum_{n=1}^{N}\phi_n \equiv 1$ on the union of the balls $B_{\eps}(x_n)$. Recalling Remark~\ref{rem_dualmass}, to estimate from above the quantity
$$\|S\|_\delta\left(\bigcup_{n=1}^{N} B_{\eps}(x_n)\right),$$
we consider an arbitary family $(f_\lambda, \pi_\lambda)_{\lambda \in \Lambda} \subseteq \operatorname{Lip}^\delta_{\mathrm{bs}}(\overline{E}) \times \operatorname{Lip}^\delta_1(\overline{E})$, for some finite set $\Lambda$, such that $\operatorname{supp}(f_\lambda) \subseteq \bigcup_{n=1}^{N} B_{\eps}(x_n)$ for all $\lambda \in \Lambda$ and $\sum_{\lambda \in \Lambda} |f_\Lambda| \leq 1$. We have
\begin{equation*}
\begin{split}
\sum_{\lambda \in \Lambda} S(f_\lambda, \pi_\lambda)&=
\sum_{\lambda \in \Lambda} \sum_{n=1}^{N} S(f_\lambda \phi_n, \pi_\lambda)
=g(R_1 - \eps)  \sum_{n=1}^{N} \sum_{\lambda \in \Lambda} S\left(f_\lambda \phi_n, \tilde{\pi}_\lambda\right) \\
&\leq g(R_1 - \eps) \, \sum_{n=1}^N\|S\| \left( B_{\eps}(x_n) \right)=
g(R_1 - \eps) \, \|S\| \left( \bigcup_{n=1}^{N} B_{\eps}(x_n) \right),
\end{split}
\end{equation*}
 where we used the locality property and the functions 
$\tilde\pi_\lambda\in \operatorname{Lip}_1(\overline{E})$, $1$-Lipschitz extensions of the
restriction of $\pi_\lambda/g(R_1-\eps)$ to $B_{2\eps}(x_n)$. The latter function is $1$-Lipschitz thanks to
\eqref{inequalityddelta3} of \cref{Stimeddelta}. Hence,
$$\|S\|_\delta\left(\bigcup_{n=1}^{N} B_{\eps}(x_n)\right) \leq g(R_1 - \eps) \, \|S\| \left( \bigcup_{n=1}^{N} B_{\eps}(x_n) \right).$$
Finally we have 
    \begin{align} \label{S_delta}
     \|S\|_\delta(\overline{B}_{R_2} \setminus \overline{B}_{R_1+\eps})
     &\leq
     \|S\|_{\delta}(B_{R_2+\eps}\setminus \overline{B}_{R_{1}+\eps})
     \leq
     \eps+\|S\|_{\delta}(K)\nonumber \\
     &\leq
     \eps+\|S\|_{\delta}\left(\bigcup_{n=1}^{N}B_{\eps}(x_{n})\right) \leq
      \eps+ g(R_{1}-\eps) \, \|S\|\left(\bigcup_{n=1}^{N}B_{\eps}(x_{n})\right) \\
      &\leq
     \eps+ g(R_{1}-\eps)  \, \|S\|(B_{R_2+2\eps}\setminus \overline{B}_{R_{1}})\nonumber .
     \end{align}
     Then, by sending $\eps\to 0$, we infer the inequality
     $$\|S\|_\delta(\overline{B}_{R_2} \setminus \overline{B}_{R_1}) \leq g(R_{1}) \, \|S\|(\overline{B}_{R_{2}}\setminus \overline{B}_{R_{1}}).$$
     The converse inequality can be proved analogously with \eqref{inequalityddelta4} of \cref{Stimeddelta}:
     for $\eps\in (0,1)$ small enough to apply \cref{inequalityddelta4} at $r\leq R_{2}+1$, we find  a compact set $K\subseteq B_{R_2 +\eps}\setminus \overline{B}_{R_{1}+\eps}$ such that 
    $$\|S\|(B_{R_{2}+\eps}\setminus (\overline{B}_{R_{1}+\eps}\cup K))\leq \eps .$$
    Considering a finite covering of $K$ composed of open balls $B_{\eps}(x_n)$, $1\leq n\leq N$, each of radius $\eps$ and center $x_n\in K$ and using a suitable partition of unity we find, analogously to the already discussed case (here we also use \cref{remark-eps}),
$$
\|S\| \left( \bigcup_{n=1}^{N} B_{\eps}(x_n) \right)\leq \frac{1}{g(R_2 + \eps+\overline{\eps}(R_{2},2\eps))} \|S\|_{\delta} \left( \bigcup_{n=1}^{N} B_{\eps}(x_n) \right)
$$
and then one concludes by performing a computation analogous to (\ref{S_delta}) and sending $\eps\to0$.\qedhere
\end{proof}

\begin{remark}\label{0-mass unchanged}
Note that the 0-currents in $E$ can be canonically identified the 0-currents in $(\overline{E}, \delta)$ supported in $i(E)$, and their masses are not affected by the change of metric.
\end{remark}

The following result, which shows that the choice of the ``optimal'' curves is somehow independent of the metric, will be an important ingredient in the conclusion of the proof of \cref{MT}.

\begin{lemma}\label{no-canc}
Let $S \in \mathbf{M}_1(\overline{B}_R, d)$ (and so also $S\in \mathbf{M}_1(\overline{B}_R, \delta)$) be such that there exists a positive Borel measure $\eta$ in $\Theta(\overline{B}_R)$ such that
$$
S=\int_{\Theta(\overline{B}_R)} T_\gamma \de\eta(\gamma).
$$
Then
$$
\|S\|=\int_{\Theta(\overline{B}_R)} \| T_\gamma \| \de\eta(\gamma)
\quad\text{if and only if}\quad
 \|S\|_\delta=\int_{\Theta(\overline{B}_R)} \| T_\gamma \|_\delta \de\eta(\gamma).
$$
\end{lemma}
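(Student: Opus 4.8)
The plan is to reduce the equivalence, via \cref{estimatefinitemass}, to the elementary fact that weighting finite measures on a fixed ball by a strictly positive continuous function preserves and reflects their equality. Throughout, write $w(x):=g(|x|)$, and introduce the two superposition measures on $\overline{B}_R$
\begin{equation*}
\mu:=\int_{\Theta(\overline{B}_R)}\|T_\gamma\|\,\de\eta(\gamma),\qquad
\mu_\delta:=\int_{\Theta(\overline{B}_R)}\|T_\gamma\|_\delta\,\de\eta(\gamma),
\end{equation*}
meant as the Borel measures $B\mapsto\int\|T_\gamma\|(B)\,\de\eta(\gamma)$ and $B\mapsto\int\|T_\gamma\|_\delta(B)\,\de\eta(\gamma)$. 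Since each $\gamma\in\Theta(\overline{B}_R)$ takes values in $\overline{B}_R$, all the currents $T_\gamma$ and $S$ belong to $\mathbf{M}_1(E)$ and are supported in $\overline{B}_R$, and so are the four measures $\|S\|$, $\|S\|_\delta$, $\mu$, $\mu_\delta$. In this notation the assertion is precisely the equivalence $\|S\|=\mu \iff \|S\|_\delta=\mu_\delta$.

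First I would record the pointwise consequences of \cref{estimatefinitemass}, which applies to each of the finite-mass currents at hand and gives $\|S\|_\delta=w\,\|S\|$ together with $\|T_\gamma\|_\delta=w\,\|T_\gamma\|$ for every $\gamma$. Integrating the second identity against $\eta$ and interchanging the order of integration by Tonelli's theorem (the integrand being nonnegative and jointly measurable in $(x,\gamma)$, as is built into the very definition of $\mu$) yields, for every Borel $B\subseteq\overline{B}_R$,
\begin{equation*}
\mu_\delta(B)=\int_{\Theta(\overline{B}_R)}\int_B w(x)\,\de\|T_\gamma\|(x)\,\de\eta(\gamma)=\int_B w(x)\,\de\mu(x),
\end{equation*}
that is, $\mu_\delta=w\,\mu$. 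Thus both sides of the desired equivalence are obtained from $\|S\|=\mu$ by the same weighting $\nu\mapsto w\,\nu$.

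Finally I would invert this weighting. On the bounded set $\overline{B}_R$ the function $w$ is continuous and satisfies $0<g(R)\leq w\leq g(0)\leq 1$, hence it is bounded and bounded away from zero. If $\|S\|=\mu$, multiplying by $w$ gives $\|S\|_\delta=w\,\|S\|=w\,\mu=\mu_\delta$. Conversely, if $\|S\|_\delta=\mu_\delta$, i.e. $w\,\|S\|=w\,\mu$, then since $1/w$ is a bounded Borel function we may divide: for every Borel $B\subseteq\overline{B}_R$,
\begin{equation*}
\|S\|(B)=\int_B\frac{1}{w(x)}\,\de\bigl(w\,\|S\|\bigr)(x)=\int_B\frac{1}{w(x)}\,\de\bigl(w\,\mu\bigr)(x)=\mu(B),
\end{equation*}
so $\|S\|=\mu$. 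The only delicate point is the strict positivity of $w$, and this is exactly what ties the argument to a fixed ball $\overline{B}_R$: were the curves allowed to escape towards $x_\infty$, where $g\to 0$, the weight would degenerate and dividing by it would fail, so that the two no-cancellation conditions could genuinely disagree.
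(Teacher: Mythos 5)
Your proof is correct and follows essentially the same route as the paper's: apply \cref{estimatefinitemass} to $S$ and to each $T_\gamma$ to get $\|S\|_\delta=g(|x|)\,\|S\|$ and $\|T_\gamma\|_\delta=g(|x|)\,\|T_\gamma\|$, exchange the order of integration, and invert the weight using the strict positivity of $g$. The only difference is that you spell out the Tonelli step and the uniform lower bound $g\geq g(R)>0$ on $\overline{B}_R$, which the paper leaves implicit with the remark ``as $g>0$''.
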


\begin{proof}
Assume that $\|T\|=\int_{\Theta(\overline{B}_R)} \| T_\gamma \| \de\eta(\gamma)$. Then, applying \cref{estimatefinitemass} to the 1-currents $S$ and $T_\gamma$, $\gamma \in \Theta(\overline{B}_R)$, we get
$$\|S\|_\delta=g(|x|) \|S\|=\int_{\Theta(\overline{B}_R)} g(|x|) \, \|T_\gamma\| \de \eta(\gamma)=\int_{\Theta(\overline{B}_R)}\|T_\gamma\|_\delta \de \eta(\gamma).$$
The converse implication is proven in the same way, using that $\|S\|=(g(|x|))^{-1} \|S\|_\delta$ (as $g>0$).
\end{proof}
\subsection{Extension of $\mathbf{T}$ to $\mathbf{(\overline{E}, \delta)}$}
In this section we show how the conformal change of metric described in \cref{conformal-sec} allows us to extend $T$ to a normal current on $(\overline{E}, \delta)$, to which we will be able to apply \cref{PS}. We shall define
\begin{equation}\label{extension of T} \overline{T}:=\lim_{r\to \infty} i_{\#}(T\mres \overline{B}_r),\end{equation}
where the limit is in mass. In particular, it will follow that $\overline{T}$ has finite mass.

\begin{proposition}\label{finite-deltamass}
If $T\in\mathbf{N}_{1,b}(E) $ the limit  $\lim_{r\to \infty} i_{\#}(T \,\mres \overline{B}_r)$ is a well-defined element
$\overline{T}$ of $\mathbf{M}_1(\overline{E})$ (in particular $\overline{T}$ has finite mass).
\end{proposition}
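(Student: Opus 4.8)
The plan is to reduce everything to the single scalar estimate $\int_E g(|x|)\,\de\|T\|(x)<\infty$ and then exploit the completeness of $(\mathbf{M}_1(\overline{E}),\mathbb{M}_\delta)$. First I record the preliminary facts. The inclusion $i\colon(E,d)\to(\overline{E},\delta)$ is $1$-Lipschitz, since $\delta\leq d$ by \cref{Stimeddelta}. For every $r>0$ the current $T\mres\overline{B}_r$ has finite mass $\mathbb{M}(T\mres\overline{B}_r)=\|T\|(\overline{B}_r)<\infty$ and bounded support, so $i_\#(T\mres\overline{B}_r)$ is a well-defined element of $\mathbf{M}_1(\overline{E})$ whose support sits in $i(\overline{B}_r)$; by the remark following \cref{Stimeddelta} this is contained in $i(E)$ and avoids the point at infinity $x_\infty$. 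Hence \cref{estimatefinitemass} applies and gives, as measures on $\overline{E}$,
\begin{equation*}
\|i_\#(T\mres\overline{B}_r)\|_\delta=g(|x|)\,\chi_{\overline{B}_r}\,\|T\|.
\end{equation*}

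The key step is then the scalar estimate. I decompose $E=\overline{B}_1\cup\bigcup_{k\geq 1}A_k$ with $A_k:=\overline{B}_{k+1}\setminus\overline{B}_k$ and use that $g(s)=1/(\tilde\phi(s)\,2^{s})$ with $\tilde\phi\geq\phi$ non-decreasing. On $A_k$ one has $2^{|x|}\geq 2^{k}$ and $\tilde\phi(|x|)\geq\phi(k)\geq\max\{1,\|T\|(\overline{B}_{k+1})\}$ by the very definition of $\phi$, so that
\begin{equation*}
\int_{A_k}g(|x|)\,\de\|T\|(x)\leq\frac{1}{2^{k}}\,\frac{\|T\|(A_k)}{\max\{1,\|T\|(\overline{B}_{k+1})\}}\leq\frac{1}{2^{k}}.
\end{equation*}
Since moreover $\int_{\overline{B}_1}g(|x|)\,\de\|T\|\leq\|T\|(\overline{B}_1)<\infty$ (as $g\leq 1$), summing over $k$ yields $\int_E g(|x|)\,\de\|T\|(x)<\infty$. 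This is precisely where the built-in factors $2^{-s}$ and $1/\tilde\phi$ of \eqref{defg} do their job.

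Finally I conclude by a Cauchy argument in the Banach space $(\mathbf{M}_1(\overline{E}),\mathbb{M}_\delta)$. For $r<s$ the difference $i_\#(T\mres\overline{B}_s)-i_\#(T\mres\overline{B}_r)=i_\#\bigl(T\mres(\overline{B}_s\setminus\overline{B}_r)\bigr)$ has total $\delta$-mass
\begin{equation*}
\mathbb{M}_\delta\bigl(i_\#(T\mres(\overline{B}_s\setminus\overline{B}_r))\bigr)=\int_{\overline{B}_s\setminus\overline{B}_r}g(|x|)\,\de\|T\|(x),
\end{equation*}
which is a tail of the convergent integral above and hence tends to $0$ as $r\to\infty$. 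Therefore $\{i_\#(T\mres\overline{B}_r)\}_{r}$ is Cauchy in mass and converges to a well-defined $\overline{T}\in\mathbf{M}_1(\overline{E})$ with $\mathbb{M}_\delta(\overline{T})=\int_E g(|x|)\,\de\|T\|(x)<\infty$.

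The only genuinely delicate point is the convergence of $\int_E g\,\de\|T\|$, which motivates the specific construction of $g$; the remaining steps are essentially bookkeeping, provided one keeps track (via \cref{estimatefinitemass} and the fact that the relevant supports stay inside $i(E)$) that no mass is lost in passing from $(E,d)$ to $(\overline{E},\delta)$.
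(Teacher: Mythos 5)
Your proof is correct and follows essentially the same route as the paper: both rest on the tail estimate $g(s)\leq 2^{-s}/\max\{1,\|T\|(\overline{B}_{s+1})\}$ applied annulus by annulus via \cref{estimatefinitemass} (the paper only invokes the one-sided bound \eqref{stimacorone}, you use the full identity $\|S\|_\delta=g(|x|)\|S\|$, which makes no difference here), followed by completeness of $(\mathbf{M}_1(\overline{E}),\mathbb{M}_\delta)$. No gaps.
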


\begin{proof}
Applying \cref{estimatefinitemass} (actually just estimate (\ref{stimacorone})) with $S=T$ and using that
$g(r) \leq 2^{-r} (\|T\|(\overline{B}_{r+1}))^{-1}$ (recall its definition in \eqref{defg}) for $r \in \N$ one infers that
$$
\|T\|_\delta(\overline{B}_{r_2} \setminus\overline{B}_{r_1}) \leq \sum_{r= \lfloor r_1 \rfloor}^\infty \|T\|_\delta(\overline{B}_{r+1} \setminus \overline{B}_r) \leq \sum_{r= \lfloor r_1 \rfloor}^\infty 2^{-r}\qquad\forall r_2\geq r_1.
$$
Hence, for any sequence $(r_n)$ with $r_n \rightarrow \infty$ the sequence $(i_\#(T \mres \overline{B}_{r_n}))$ is Cauchy in mass and the limits are all the same.
\end{proof}

Now, it remains to verify that $\partial \overline{T}$ still has finite mass, if $\partial T$ does so. Note that, for this purpose, the topological properties of $E=\ell^{\infty}$ described in \cref{single-point} and \cref{boundtoinfinity} are essential for the validity of the result, as shown by the following example.

\begin{example}\label{exnofinitemass}
Let $E:=([0,\infty)\times\{0\})\cup\bigcup_{n\in\mathbb{N}}(\{n\}\times [0,\infty))\subseteq\mathbb{R}^{2}$ with the induced Euclidean distance and consider the geodesic distance on $E$ defined by the relation $$d(x,y):=\inf \left\{ \ell(\gamma)\,\,\,\mid \gamma\in \operatorname{Lip}([0,1],E), \,\gamma(0)=x,\,\gamma(1)=y\right\}.$$
Consider the current $T:=\sum_{n=0}^{\infty}\llbracket T_{\gamma_n}\rrbracket$, with $\gamma_n$ as in \cref{fig_gamman}. It is easy to see that $T$ has locally finite mass and that $\partial T=0$. On the other hand, the
corresponding current $\overline{T}$ in $(\overline{E},\delta)$ constructed as above does not have finite mass.

\smallskip 
\begin{center}
\includegraphics[width=5cm]{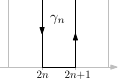}
\captionof{figure}{\bigskip}\label{fig_gamman}
\end{center}
\end{example}

As anticipated, under our assumptions the current $\partial\overline{T}$ has finite mass. More precisely, we have the following statement.

\begin{proposition} \label{boundary-est}
For all $T \in \mathbf{N}_{1, b}(E)$ one has $\mathbb{M}_\delta(\partial \overline{T}) \leq 2 \mathbb{M}(\partial T)$.
\end{proposition}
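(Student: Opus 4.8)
The plan is to compute $\partial\overline{T}$ as the weak limit of the boundaries of the approximating currents $i_\#(T\mres\overline{B}_{r_n})$, to identify this limit explicitly, and only then to read off its mass; the factor $2$ will appear as the net charge $|(\partial T)(E)|$ escaping to $x_\infty$. We may assume $\mathbb{M}(\partial T)<\infty$, since otherwise the inequality is trivial. I would choose radii $r_n\uparrow\infty$ as in the construction of $g$ (see \eqref{r_n}), so that $T\mres\overline{B}_{r_n}\in\mathbf{N}_1(E)$, $\|\partial T\|(\partial B_{r_n})=0$, and, by \cref{deriv-est}, $\|\partial(T\mres\overline{B}_{r_n})\|(\partial B_{r_n})\le\alpha'(r_n)$ with $\alpha(r):=\|T\|(\overline{B}_r)$. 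Using the locality of the boundary and the Leibniz rule \eqref{1Leib}, I would first record the decomposition
\begin{equation*}
\partial(T\mres\overline{B}_{r_n})=(\partial T)\mres\overline{B}_{r_n}+\mu_n,
\end{equation*}
where $\mu_n$ is a finite signed measure supported on the sphere $\partial B_{r_n}$ with $\|\mu_n\|(\partial B_{r_n})\le\alpha'(r_n)$: on the open ball the boundary equals $(\partial T)\mres B_{r_n}$, nothing of $\partial T$ sits on $\partial B_{r_n}$ by the choice of $r_n$, and nothing survives outside $\overline{B}_{r_n}$. Testing $\partial(T\mres\overline{B}_{r_n})$ against a Lipschitz function equal to $1$ on $\overline{B}_{r_n}$ and using that $T\mres\overline{B}_{r_n}$ is a boundary-closed total charge gives the balance $\mu_n(\partial B_{r_n})=-(\partial T)(\overline{B}_{r_n})$.

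Next I would push everything into $(\overline{E},\delta)$ through $i$. By \cref{finite-deltamass} we have $i_\#(T\mres\overline{B}_{r_n})\to\overline{T}$ in mass, hence weakly, and since the boundary operator is stable under weak convergence of normal currents, $i_\#\partial(T\mres\overline{B}_{r_n})\rightharpoonup\partial\overline{T}$. It therefore suffices to identify the weak limit of $i_\#\big((\partial T)\mres\overline{B}_{r_n}\big)+i_\#\mu_n$. The first summand converges in mass to $i_\#\partial T$, because $\partial T$ is a finite measure, $\overline{B}_{r_n}\uparrow E$, and by \cref{0-mass unchanged} the mass of a $0$-current is unaffected by the change of metric. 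For the second summand, \cref{boundtoinfinity} gives $\sup_{x\in\partial B_{r_n}}\delta(i(x),x_\infty)\le\int_{r_n}^\infty g(s)\de s$, so $i(\partial B_{r_n})$ collapses to $x_\infty$. Splitting, for $f\in\operatorname{Lip}^\delta_{\mathrm{bs}}(\overline{E})$,
\begin{equation*}
i_\#\mu_n(f)=f(x_\infty)\,\mu_n(\partial B_{r_n})+\int\big(f(i(x))-f(x_\infty)\big)\de\mu_n(x),
\end{equation*}
the oscillation term is bounded by $\operatorname{Lip}^\delta(f)\,\big(\int_{r_n}^\infty g\big)\,\alpha'(r_n)$. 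The definition \eqref{defg} of $g$ forces $\big(\int_{r_n}^\infty g\big)\alpha'(r_n)\to 0$, since $\tilde{\phi}(s)\ge\max\{1,\alpha'(r_n)\}$ for $s\ge r_n$ yields $\int_{r_n}^\infty g\le 2^{-r_n}/\big(\ln 2\cdot\max\{1,\alpha'(r_n)\}\big)$, while $\mu_n(\partial B_{r_n})=-(\partial T)(\overline{B}_{r_n})\to-(\partial T)(E)$. Hence $i_\#\mu_n\rightharpoonup-(\partial T)(E)\,\delta_{x_\infty}$.

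Putting the two limits together yields the explicit formula
\begin{equation*}
\partial\overline{T}=i_\#\partial T-(\partial T)(E)\,\delta_{x_\infty}.
\end{equation*}
Since $i_\#\partial T$ is concentrated on $i(E)$ and $x_\infty\notin i(E)$, the two contributions are mutually singular, so
\begin{equation*}
\mathbb{M}_\delta(\partial\overline{T})=\mathbb{M}(\partial T)+|(\partial T)(E)|\le 2\,\mathbb{M}(\partial T),
\end{equation*}
using $|(\partial T)(E)|\le|\partial T|(E)=\mathbb{M}(\partial T)$; in particular $\partial\overline{T}$ has finite mass. The hard part will be the control of the \emph{extra} boundary $\mu_n$ created by cutting at $\partial B_{r_n}$: its total variation $\alpha'(r_n)$ need not be small, and the factor $2$ is genuinely necessary, as the example of a ray in $\R^d$ (whose completion becomes a curve joining its finite endpoint to $x_\infty$) saturates the bound. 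What makes the argument work is that, although $\|\mu_n\|$ may blow up, the rapid decay of $g$ built into \eqref{defg} kills the product $\big(\int_{r_n}^\infty g\big)\alpha'(r_n)$, so $i_\#\mu_n$ concentrates at $x_\infty$ with weight equal only to its net charge $-(\partial T)(E)$.
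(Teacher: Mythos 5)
Your argument is correct, and it takes a genuinely different route from the paper. The paper proves this proposition by applying \cref{PS} to each truncation $T_r=T\mres\overline{B}_r$, decomposing it into curves, and building an auxiliary current $\widetilde{T}_r$ in which every curve with \emph{both} endpoints on $\partial B_r$ is closed up through $x_\infty$ by two half-lines; the factor $2$ then comes from the crude estimate $\|\partial T_\gamma\|(\overline{E})\leq 2\|\partial T_\gamma\|(B_r)$ for the remaining curves, and one only gets the inequality after showing $\mathbb{M}_\delta(T_r-\widetilde{T}_r)\to 0$. You instead work entirely at the level of $0$-currents: you split $\partial(T\mres\overline{B}_{r_n})$ into $(\partial T)\mres\overline{B}_{r_n}$ plus a sphere measure $\mu_n$ controlled by \cref{deriv-est}, pin down its net charge $\mu_n(\partial B_{r_n})=-(\partial T)(\overline{B}_{r_n})$ via the vanishing of the total charge of the boundary of a compactly supported finite-mass current, and let the sphere collapse onto $x_\infty$ using \cref{boundtoinfinity} together with the same decay estimate $\bigl(\int_{r_n}^\infty g\bigr)\alpha'(r_n)\leq 2^{-r_n}/\ln 2$ that the paper uses for $\mathbb{M}_\delta(T_r-\widetilde T_r)$. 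Your approach avoids \cref{PS} altogether for this step and yields the stronger exact identity $\partial\overline{T}=i_\#\partial T-(\partial T)(E)\,\delta_{x_\infty}$, hence $\mathbb{M}_\delta(\partial\overline{T})=\mathbb{M}(\partial T)+|(\partial T)(E)|$, which in particular shows the constant $2$ is attained (half-line) and cannot be improved; the paper's curve-level construction is less sharp here but is closer in spirit to the superposition machinery used in the rest of the argument. The only points worth spelling out a little more carefully are (i) the justification that $\partial(T\mres\overline{B}_{r_n})$ is concentrated on $\overline{B}_{r_n}$ and agrees with $\partial T$ on the open ball (locality of the boundary, plus $\|\partial T\|(\partial B_{r_n})=0$), so that $\mu_n$ really sits on the sphere, and (ii) the mutual singularity of $i_\#\partial T$ and $\delta_{x_\infty}$ (clear since $x_\infty\notin i(E)$), which you do use to compute the total variation of the limit; neither is a gap.
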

\FloatBarrier

\begin{proof}
Let $T_r:=T \, \mres \overline{B}_r$. We work with $r \in \{r_n\}$ (without explicitly relabeling the sub-sequence), so that (\ref{r_n}) holds. In particular, $T_r$ is normal, hence by \cref{PS} we can write $T_r=A_r+C_r$, where $\partial C_r=0$, $C_r \leq T_r$ and $A_r$ acyclic. Also, the current $A_r$ enjoys the representation $\int_{\Theta(\overline{B}_r)} T_\gamma\de\eta_r(\gamma)$ with
\begin{equation}\label{superpos_r}
\|A_r\|=\int_{\Theta(\overline{B}_r)} \|T_\gamma\| \de\eta_r(\gamma), \qquad
 \|\partial A_r\|=\|\partial T_r\|=\int_{\Theta(\overline{B}_r)} \|\partial T_\gamma\| \de\eta_r(\gamma).
\end{equation}
Set
$$E_r:=\bigl\{ \gamma \in \Theta(\overline{B}_r) \mid \gamma(0)\in\partial B_r,\,\, \gamma(1) \in \partial B_r\bigr\}.$$
Now, we construct a new current $\widetilde{T}_r$ by modifying those curves in the decomposition above which lie in $E_r$. Namely, given a curve $\gamma \in E_r$ we replace it with the closed curve  $\alpha(\gamma)=\alpha_1(\gamma) \oplus \gamma \oplus \alpha_2(\gamma)$, where $\alpha_1(\gamma)$, $\alpha_2(\gamma)$ denote the two half-lines from the origin which connect $x_\infty$ to $\gamma(0)$ and $\gamma(1)$ to $x_\infty$ respectively, see \cref{fig_alpha}. \\
\begin{figure}
\centering
\begin{minipage}{.5\textwidth}
  \centering
  \includegraphics[width=5cm]{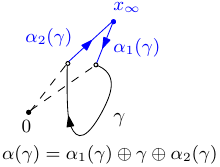}
  \captionof{figure}{}
  \label{fig_alpha}
\end{minipage}%
\begin{minipage}{.5\textwidth}
  \centering
  \includegraphics[width=6cm]{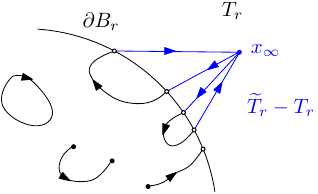}
  \captionof{figure}{}
  \label{fig_Ttilde}
\end{minipage}
\end{figure}
That is, we define (see also \cref{fig_Ttilde})
$$\widetilde{T}_r:=\int_{E_r} T_{\alpha(\gamma)} \de \eta_r(\gamma)+\int_{E_r^c} T_\gamma \de\eta_r(\gamma)+C_r,$$

so that
$$
\partial \widetilde{T}_r:=\int_{E_r^c} \partial T_\gamma \de\eta_r(\gamma).
$$
Note that here we heavily exploited the topological information given by \cref{single-point}. Also, the linear structure of $E=\ell^\infty$ allowed us to use curves $\alpha_1$, $\alpha_2$ which satisfy \cref{boundtoinfinity} (which will be useful in the following), namely half-lines from the origin. \\
We claim that
\begin{equation}\label{mass-convergence}
\lim_{r \rightarrow \infty} \mathbb{M}_\delta(T_r-\widetilde{T}_r)=0.
\end{equation}
Assuming that (\ref{mass-convergence}) is true, then the mass of $\partial \overline{T}$ can be controlled using only the boundary mass of those curves in $E_r^c$ which, by definition, have at least an extremal in the interior of
$\overline{B}_r$, and thus contribute to $\|\partial T\|(B_r)$. Indeed,

\begin{eqnarray*}
\|\partial  \widetilde{T}_r\|(\overline{E})&\leq&
\int_{E_r^c} \|\partial T_\gamma\|(\overline{E}) \de\eta_r(\gamma)\leq
2\int_{E_r^c} \|\partial T_\gamma\|(B_r) \de\eta_r(\gamma)\\
&\leq&
2\int_{\Theta(\overline{B}_r)} \|\partial T_\gamma\|(B_r) \de\eta_r(\gamma)=
2\|\partial T_r\|(B_r)\leq 2\mathbb{M}(\partial T)
\end{eqnarray*}
since $(\partial T_r)\mres B_r=(\partial T)\mres B_r$.
To be more precise, since $T_r \rightarrow \overline{T}$ in mass in $\mathbf{M}_1(\overline{E})$ by construction, then (\ref{mass-convergence}) immediately implies
$\mathbb{M}_\delta(\widetilde{T}_r-\overline{T}) \rightarrow 0$, which gives us, for any $f \in \operatorname{Lip}^\delta(\overline{E})$ with $\sup |f |\leq 1$,
$$
\partial \overline{T}(f)  = \overline{T}(1\,\de f)=
\lim_r\widetilde{T}_r(1\, \de f) = \lim_r\, \partial\widetilde{T}_r(1\,\de f) \leq 2\mathbb{M}(\partial T).$$
The convergence (\ref{mass-convergence}) can be proved through a direct estimate, using the fact that (by construction) $g(t) \leq 2^{-t}(\frac\de{\de r} \|T\|(\overline{B}_r))^{-1}$ for any $r \in \{r_n\}$, $t \geq r$. Indeed
\begin{equation*}
\begin{split}
\mathbb{M}_\delta(T_r-\widetilde{T}_r)& \leq \int_{E_r} (\|T_{\alpha_1(\gamma)}\|_\delta+\|T_{\alpha_2(\gamma)}\|_\delta) \de \eta_r(\gamma)=2\eta_r(E_r) \,\int_r^\infty g(t) \de t \\
& \leq \| \partial T_r\|(\partial B_r) \int_r^\infty g(t) \de t  \leq \frac\de{\de r} \|T\|(\overline{B}_r) \int_r^\infty g(t) \de t \\
&\leq \int_r^\infty 2^{-t} \de t \rightarrow 0\quad\text{as $r\to\infty$},
\end{split}
\end{equation*}
where we also exploited the following consequence of \eqref{superpos_r}:
\begin{equation*}
\|\partial T_r\|(\partial B_r) \geq \int_{E_r} \| \partial T_\gamma\|(\partial B_r) \de \eta_r(\gamma)=2\eta_r(E_r). \qedhere
\end{equation*}
\end{proof}

\begin{corollary}\label{cor}
For any $T \in\mathbf{N}_{1,b}(E)$ such that $\| \partial T\|(E) < \infty$, the current $\overline{T}$ built by $\lim_r T \, \mres B_r$ belongs to $\mathbf{N}_1(\overline{E})$. In particular, there exists a measure $\overline{\eta}$ over $\Theta(\overline{E})$ that decomposes $\overline{T}$ into curves as in \cref{PS}.
\end{corollary}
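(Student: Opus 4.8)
The plan is to assemble the two preceding propositions to verify that $\overline{T}$ meets the definition of a normal current on $(\overline{E},\delta)$, and then to invoke \cref{PS} directly. First I would record that, by \cref{finite-deltamass}, the limit $\overline{T}=\lim_{r\to\infty} i_\#(T\mres\overline{B}_r)$ exists in mass and belongs to $\mathbf{M}_1(\overline{E})$; in particular $\mathbb{M}_\delta(\overline{T})<\infty$ and, by the very definition of $\mathbf{M}_1$, the mass measure $\|\overline{T}\|_\delta$ lies in $\mathbf{M}_0(\overline{E})$, hence is concentrated on a $\sigma$-compact set.

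Next I would use the hypothesis $\|\partial T\|(E)=\mathbb{M}(\partial T)<\infty$ together with \cref{boundary-est}, which gives $\mathbb{M}_\delta(\partial\overline{T})\leq 2\mathbb{M}(\partial T)<\infty$. Thus both $\overline{T}$ and $\partial\overline{T}$ have finite $\delta$-mass, so by \cref{defnormal} we conclude $\overline{T}\in\mathbf{N}_1(\overline{E})$.

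With normality established, the decomposition is immediate: since $(\overline{E},\delta)$ is complete by construction and $\|\overline{T}\|_\delta$ is tight (being in $\mathbf{M}_0$), the hypotheses of \cref{PS} are met, and applying it to $\overline{T}$ produces the desired finite positive Borel measure $\overline{\eta}$ over $\Theta(\overline{E})$ satisfying \eqref{rappresentazione}--\eqref{massa}. I do not expect any genuine obstacle here: all the substantive work has been carried out in \cref{finite-deltamass} and \cref{boundary-est}, and the only point deserving a moment's care is confirming that the ambient space $(\overline{E},\delta)$ satisfies the standing completeness and tightness requirements under which \cref{PS} is stated — both of which are guaranteed, the former by the completion procedure and the latter by membership in $\mathbf{M}_1(\overline{E})$.
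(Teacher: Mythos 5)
Your proposal is correct and follows exactly the route the paper intends: the corollary is a direct assembly of \cref{finite-deltamass} (finite $\delta$-mass of $\overline{T}$) and \cref{boundary-est} (finite $\delta$-mass of $\partial\overline{T}$), followed by an application of \cref{PS}, which is why the paper states it without a written proof. Your added check that $(\overline{E},\delta)$ is complete and that $\|\overline{T}\|_\delta\in\mathbf{M}_0(\overline{E})$ is concentrated on a $\sigma$-compact set is consistent with the paper's standing conventions.
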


We also notice that, as a consequence of \cref{no-canc}, acyclicity is preserved by the transformation $T \mapsto \overline{T}$.

\begin{lemma}\label{acyc-lemma}
If $T \in \mathbf{N}_{1,b}(E)$ is acyclic, then $\overline{T}$ is acyclic in $\overline{E}$.
\end{lemma}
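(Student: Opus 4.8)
The plan is to show that every cycle of $\overline T$ in $(\overline E,\delta)$ descends, upon deleting the single point at infinity, to a cycle of $T$ in $(E,d)$, so that acyclicity of $T$ forces it to vanish. Throughout I identify $E$ with $i(E)=\overline E\setminus\{x_\infty\}$ (\cref{single-point}), which is an \emph{open} subset of $\overline E$ on which, by \cref{Stimeddelta}(1), $d$ and $\delta$ are bi-Lipschitz equivalent on every bounded ball.

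First I would record that $\overline T$ charges no mass at $x_\infty$. Indeed \cref{estimatefinitemass} gives $\|\overline T\|_\delta\mres i(E)=g(|x|)\|T\|$, of total $\delta$-mass $\int_E g\,\de\|T\|$, while by \cref{finite-deltamass} one has $\|\overline T\|_\delta(\overline E)=\lim_r\|T\|_\delta(\overline B_r)=\int_E g\,\de\|T\|$; hence $\|\overline T\|_\delta(\{x_\infty\})=0$. Any cycle $\overline C\le\overline T$ then also satisfies $\|\overline C\|_\delta(\{x_\infty\})=0$, so restriction to $i(E)$ loses nothing, i.e. $\overline C\mres i(E)=\overline C$.

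Next I would descend $\overline C$ to a current $C\in\mathbf{M}_{1,b}(E)$ by setting $C(f\,\de\pi):=\overline C(\bar f\,\de\bar\pi)$ for $(f,\pi)\in\operatorname{Lip}_{\mathrm{bs}}(E)\times\operatorname{Lip}(E)$, where $\bar f$ is the extension of $f$ by $0$ (which is $\delta$-Lipschitz, since $\{f\neq0\}$ is $d$-bounded and thus $\delta$-away from $x_\infty$) and $\bar\pi$ is any $\delta$-Lipschitz function on $\overline E$ agreeing with $\pi$ near $\operatorname{supp} f$; such a $\bar\pi$ exists because $\pi$ is $\delta$-Lipschitz on the bounded set $\operatorname{supp} f$ by \cref{Stimeddelta}(1), and the definition is unambiguous by the locality property (\ref{point3defmass}). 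By construction $\overline T$ descends to $T$ in the same manner, and the subcurrent relation transfers: restricting the identity $\|\overline T\|_\delta=\|\overline C\|_\delta+\|\overline T-\overline C\|_\delta$ to $i(E)$ and applying \cref{estimatefinitemass} (equivalently, the no-cancellation principle of \cref{no-canc}) replaces every term by $g(|x|)$ times the corresponding $d$-mass, so that $\|T\|=\|C\|+\|T-C\|$, i.e. $C\le T$.

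The delicate point, which I expect to be the main obstacle, is to deduce $\partial C=0$ from $\partial\overline C=0$, i.e. to check that cutting open the loop structure at $x_\infty$ creates no boundary at finite points. For $f\in\operatorname{Lip}_{\mathrm{bs}}(E)$ and $\sigma\in\operatorname{Lip}_{\mathrm{bs}}(E)$ with $\sigma\equiv1$ on $\operatorname{supp} f$ one has $\partial C(f)=C(\sigma\,\de f)=\overline C(\bar\sigma\,\de\bar f)$; since $\operatorname{supp} f$ is $d$-bounded, its $\delta$-closure in $\overline E$ remains inside the same ball and avoids $x_\infty$, so $\bar\sigma\equiv1$ on the $\delta$-support of $\bar f$, whence $\overline C(\bar\sigma\,\de\bar f)=\partial\overline C(\bar f)=0$. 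Conceptually this is forced by the dichotomy for curves of locally finite length (\cref{DefGamma}): a cycle of $\overline T$ passing through the \emph{single} point at infinity restricts on $E$ to an open-ended curve both of whose ends escape to infinity, hence to a genuine \emph{local} cycle with vanishing boundary, precisely what acyclicity of $T$ excludes. Finally, since $T$ is acyclic, $C\le T$ with $\partial C=0$ gives $C=0$, hence $\|C\|=0$, and then $\|\overline C\|_\delta=g(|x|)\|C\|+\|\overline C\|_\delta(\{x_\infty\})=0$ by \cref{estimatefinitemass} and the second step, so $\overline C=0$. Thus $\overline T$ is acyclic.
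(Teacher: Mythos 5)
Your proof is correct, but it takes a genuinely different route from the paper's. The paper never descends the cycle and divides by $g$; instead, starting from $\|S\|_\delta+\|\overline{T}-S\|_\delta=\|\overline{T}\|_\delta$, it applies \cref{PS} to $S\mres\overline{B}_R$ and $(T-S)\mres\overline{B}_R$ (normal for a.e.\ $R$ by \cref{deriv-est}) to get curve decompositions $\eta_1,\eta_2$ of the $d$-masses, converts these to $\delta$-mass decompositions via \cref{no-canc}, combines with the $\delta$-identity to see that $\eta_1+\eta_2$ decomposes $\|T\|_\delta$ on $\overline{B}_R$ without cancellation, and then applies \cref{no-canc} a second time to transfer the additivity back to the $d$-masses, concluding $\|S\|(\overline{B}_R)+\|T-S\|(\overline{B}_R)=\|T\|(\overline{B}_R)$ via \cref{subcurrentsufficopen}. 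Your argument short-circuits this: you apply \cref{estimatefinitemass} directly to the three descended currents and use $g>0$ together with the subadditivity $\|T\|\leq\|C\|+\|T-C\|$ to cancel the conformal factor. This is shorter and avoids invoking \cref{PS} altogether; what it costs is that you must verify explicitly that the descended cycle lies in $\mathbf{M}_{1,b}(E)$ (locally finite $d$-mass via \cref{Stimeddelta}(1), tightness inherited from $\|\overline{T}\|_\delta$) and that its $\delta$-mass in the sense of \cref{estimatefinitemass} agrees with $\|\overline{C}\|_\delta\mres i(E)$ --- identifications the paper's route largely sidesteps by working through curves, whose lengths transform cleanly between the two metrics (\cref{length-formula}). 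Your preliminary observations ($\|\overline{T}\|_\delta(\{x_\infty\})=0$, hence no mass is lost in the restriction, and $\partial C=0$ from $\partial\overline{C}=0$ by matching the cutoffs) are also sound and correspond to what the paper compresses into the sentence that $S$ ``naturally defines a local current on $E$ with no boundary.''
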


\begin{proof}
Let $S \in \mathbf{M}_1(\overline{E})$ s.t. $S \leq \overline{T}$ and $\partial S=0$. Then $S$ naturally defines a local current on $E$ with no boundary (since any $f \in \operatorname{Lip}(E)$ with bounded support is also an element of $\operatorname{Lip}^\delta(\overline{E})$). Recalling Remark~\ref{subcurrentsufficopen}, to obtain that $S=0$ from the acyclicity of $T$ it remains to check that
$$\|S\|(\overline{B}_R)+\|T-S\|(\overline{B}_R)=\|T\|(\overline{B}_R)$$
for arbitrarily large $R$. By hypothesis, we have the identity of measures
\begin{equation}\label{S,T delta}
\|S\|_\delta+\|\overline{T}-S\|_\delta=\|\overline{T}\|_\delta.
\end{equation}
Apply then \cref{PS} to find, for a.e. $R>0$, two measures $\eta_1$, $\eta_2$ on $\Theta(\overline{B}_R)$ which decompose in curves $S \mres \overline{B}_R$, $(T-S) \mres \overline{B}_R$ respectively (hence, $\eta_1+\eta_2$ decomposes $T\mres \overline{B}_R$) such that, on the ball $\overline{B}_R$,
$$\|S \| = \int_{\Theta(\overline{B}_R)} \|T_\gamma\| \de \eta_1, \quad \|T-S\|=\int_{\Theta(\overline{B}_R)} \|T_\gamma\| \de \eta_2.$$
By \cref{no-canc}, it also holds
$$\|S\|_\delta= \int_{\Theta(\overline{B}_R)} \|T_\gamma\|_\delta \de \eta_1, \quad \|T-S\|_\delta=\int_{\Theta(\overline{B}_R)} \|T_\gamma\|_\delta \de \eta_2.$$
on $\overline{B}_R$. Putting this together with (\ref{S,T delta}) we obtain on $\overline{B}_R$ the identity of measures
$$\|T \|_\delta =\int_{\Theta(\overline{B}_R)} \|T_\gamma\|_\delta \de(\eta_1+\eta_2).$$
Thus, applying another time \cref{no-canc},
\begin{equation*}\|T\|(\overline{B}_R)=\int_{\Theta(\overline{B}_R)} \|T_\gamma\|(\overline{B}_R) \de(\eta_1+\eta_2)=\|S\|(\overline{B}_R)+\|T-S\|(\overline{B}_R). \qedhere \end{equation*}
\end{proof}

\subsection{An auxiliary topology on open ended curves with locally finite length in $E$}\label{sectioncurves}

Let again $(E,d)$ be any complete metric space, as in \cref{MT}. We equip the set of curves $\Gamma(E)$ with the weak topology of local currents. That is, we define $\tau_\Gamma$ as the smallest topology on $\Gamma(E)$ such that the functions
$$\beta \in \Gamma(E) \mapsto T_\beta(f \de \pi)$$
are continuous, where $(f, \pi) \in \operatorname{Lip}_{\mathrm{bs}}(E) \times \operatorname{Lip}(E)$. \\
A fundamental system of neighbourhoods of $\bar\beta\in\Gamma(E)$ is given by
finite intersection of sets of the form
\begin{equation}
\left\{\beta\in\Gamma(E)\mid \bigl|T_\beta(f\,\de\pi)-T_{\bar\beta}(f\,\de\pi)\bigr|<\lambda\right\}
\end{equation} 
for $\lambda>0$ and $f\in{\rm Lip_{bs}}(E)$, $\pi\in {\rm Lip}(E)$.

\begin{remark}\label{meas-remark}
By \cref{rem_dualmass}, the maps $\beta \mapsto \|T_\beta\|(A)$, $\beta \mapsto\| \partial T_\beta\|(A)$ are lower semicontinuous on $(\Gamma(E), \tau_\Gamma)$ for any open set $A \subseteq E$. Then, by a straightforward application of Dynkin's $\pi$-$\lambda$ theorem one deduces that in general these maps are Borel-measurable for $A \subseteq E$ Borel, since the family
$$\mathcal{D}_R:=\{A \in \mathcal{B}(B_R(\bar{x})): \beta \mapsto \|T_\beta\|(A), \, \beta \mapsto \| \partial T_\beta\|(A) \mbox{ are Borel} \}$$
is a Dynkin system, for all $R>0$ (where we have fixed $\bar{x} \in E$).
\end{remark}

We recall the following simple result linking the Paolini-Stepanov topology on compact Lipschitz curves (see \cref{defTheta}) with the weak convergence of currents (see \cite[Lemma 4.1]{PS1}).

\begin{lemma}\label{PSlemma}
Let $(\gamma_n)_n \subseteq \Theta(E)$ be a sequence of curves with $\sup_n \ell(\gamma_n) < \infty$. If $\gamma_n \to \gamma$ in $\Theta(E)$ for some $\gamma \in \Theta(E)$, then $T_{\gamma_n} \to T_\gamma$
weakly.
\end{lemma}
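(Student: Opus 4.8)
The plan is to use the reparameterization invariance of the assignment $\gamma\mapsto T_\gamma$ to turn the $d_\Theta$-convergence, which only controls the curves up to reparameterization, into honest uniform convergence of well-chosen parameterizations, and then to pass to the limit in the defining integral $T_{\gamma_n}(f\,\de\pi)=\int_0^1 (f\circ\gamma_n)\,(\pi\circ\gamma_n)'\,\de t$ by a weak-$*$ compactness argument in $L^\infty(0,1)$. Since $T_{\gamma_n}$ depends only on the class of $\gamma_n$ in $\Theta(E)$, I would first reparameterize each $\gamma_n$ by constant speed on $[0,1]$, so that $\gamma_n$ is $L$-Lipschitz with $L:=\sup_n\ell(\gamma_n)<\infty$; the family $(\gamma_n)$ is then equi-Lipschitz, hence equicontinuous.

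Next I would extract a uniform limit by Arzelà--Ascoli. The missing ingredient is pointwise relative compactness: for fixed $t$, the hypothesis $d_\Theta(\gamma_n,\gamma)\to 0$ yields increasing bijections $\phi_n$ with $\sup_t d(\gamma_n(t),\gamma(\phi_n(t)))\le\eps_n\to 0$, so $\gamma_n(t)$ sits within $\eps_n$ of the compact set $\gamma([0,1])$; this makes $\{\gamma_n(t):n\}$ totally bounded, hence relatively compact by completeness of $E$. Arzelà--Ascoli then gives a subsequence converging uniformly to some $L$-Lipschitz curve $\sigma$. Uniform convergence implies $d_\Theta(\gamma_n,\sigma)\to 0$ (choose the identity reparameterization), whence $d_\Theta(\sigma,\gamma)=0$, i.e.\ $[\sigma]=[\gamma]$ in $\Theta(E)$, and therefore $T_\sigma=T_\gamma$, the map $\theta\mapsto T_\theta$ being well defined on $\Theta(E)$ (as already used in \cref{PS}).

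The analytic core is to show that uniform convergence of equi-Lipschitz curves forces weak convergence of the currents. Fixing $(f,\pi)\in\operatorname{Lip}_{\mathrm{bs}}(E)\times\operatorname{Lip}(E)$, I have $f\circ\gamma_n\to f\circ\sigma$ uniformly (Lipschitzianity of $f$), while the real functions $u_n:=\pi\circ\gamma_n$ are equi-Lipschitz with $\|u_n'\|_{L^\infty}\le\operatorname{Lip}(\pi)\,L$ and converge uniformly to $u:=\pi\circ\sigma$; integrating $\int_0^1 u_n'\varphi=-\int_0^1 u_n\varphi'$ against $\varphi\in C_c^\infty(0,1)$ and using the $L^\infty$ bound together with the density of $C_c^\infty(0,1)$ in $L^1(0,1)$ gives $u_n'\rightharpoonup^{*} u'$ in $L^\infty(0,1)=(L^1(0,1))^{*}$. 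Splitting $\int_0^1(f\circ\gamma_n)\,u_n'-\int_0^1(f\circ\sigma)\,u'$ as $\int_0^1(f\circ\gamma_n-f\circ\sigma)\,u_n'$, bounded by $\|f\circ\gamma_n-f\circ\sigma\|_\infty\operatorname{Lip}(\pi)L\to 0$, plus $\int_0^1(f\circ\sigma)\,(u_n'-u')$, which vanishes by weak-$*$ convergence since $f\circ\sigma\in L^1(0,1)$, I conclude $T_{\gamma_n}(f\,\de\pi)\to T_\sigma(f\,\de\pi)=T_\gamma(f\,\de\pi)$ along the subsequence. Since every subsequence admits a further subsequence converging weakly to the same limit $T_\gamma$, the full sequence converges, proving $T_{\gamma_n}\rightharpoonup T_\gamma$.

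The main obstacle I anticipate is the extraction step: $d_\Theta$ controls the curves only up to reparameterizations that may be arbitrarily irregular, so they cannot be tracked directly in the integral. Normalizing to constant speed and invoking equi-Lipschitz compactness circumvents this, at the price of identifying the uniform limit $\sigma$ only up to the equivalence $d_\Theta=0$ — which is precisely what is needed, because $T$ factors through $\Theta(E)$.
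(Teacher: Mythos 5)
The paper does not actually prove this lemma --- it is quoted directly from \cite[Lemma 4.1]{PS1} --- and your argument is a correct, self-contained reconstruction along the standard lines of that reference: normalize to constant speed (which changes neither the class in $\Theta(E)$ nor the current), extract a uniform limit by Arzel\`a--Ascoli using that each $\gamma_n(t)$ lies within $\eps_n$ of the compact set $\gamma([0,1])$, and pass to the limit in $\int_0^1 (f\circ\gamma_n)(\pi\circ\gamma_n)'\,\de t$ via weak-$*$ convergence of the uniformly bounded derivatives $(\pi\circ\gamma_n)'$ in $L^\infty(0,1)$. The only external input is the well-definedness of $\theta\mapsto T_\theta$ on $d_\Theta$-equivalence classes, which the paper already takes for granted in stating \cref{PS}, so this is not a gap.
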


\subsection{Conclusion} We are now ready to prove our main Theorem.
\begin{proof}[Proof of \cref{MT}]
It is not restrictive to work in the case $\mathbb{M}(\partial T)< \infty$, since our result is valid for general $T \in \mathbf{N}_{1,b}(E)$ once it holds for the currents $(S_i)_i$ which decompose $T$ as in \cref{infinite-boundary} (see also \cref{remark-reduction}). 
Also, as discussed at the beginning of \cref{riduzioneallinfinito}, we may assume $E=\ell^\infty$; this will allow us to apply our machinery. First, the decomposition $T=A+C$ with $ A,\, C \in \mathbf{N}_{1,b}(E)$, $C \leq T$, $\partial C=0$ and $A$ acyclic is given by \cref{decompcycacyc}.

By \cref{cor}, we can find a positive finite measure $\overline{\eta}$ in $\Theta(\overline{E})$, supported on $\{\gamma: \mathbb{M}_\delta(T_\gamma)=\ell_\delta(\gamma)\}$, such that
\begin{align}
&\overline{T}=\int_{\Theta(\overline{E})} T_\gamma \de\overline{\eta}(\gamma), \label{decomp2} \\
&\|\overline{T}\|_\delta=\int_{\Theta(\overline{E})} \|T_\gamma\|_\delta\de\overline{\eta}(\gamma), \label{masseq2}
\end{align}
\noindent and, moreover, if $T=A$ is acyclic (since then also $\overline{T}$ is acyclic by \cref{acyc-lemma}),
\begin{equation}
 \|\partial \overline{T}\|_\delta=\int_{\Theta(\overline{E})} \|\partial T_\gamma\|_\delta \de\overline{\eta}(\gamma), \label{boundaryeq2}   
\end{equation}
and $\overline{\eta}$-a.e. $\gamma \in \Theta(\overline{E})$ is an arc. 

\noindent
In the sequel, an open ended subcurve $\beta$ of $\gamma\in\Theta(\overline{E})$ is said to be a $x_\infty$-subcurve if 
$\beta:(a,b)\to E$ is a subcurve of $\gamma$ and $(a,b)$ is a connected component of $\{t\in (0,1):\ \gamma(t)\neq x_\infty\}$.

Next, for $F\subseteq\Gamma(E)$ we define the application $p_E(\gamma,F):\Theta(\overline{E})\to\N\cup\{\infty\}$ as
\begin{equation}\label{defp_E}
\begin{split}
p_E(\gamma,F):=\sum_{\beta\in F}n_\gamma(\beta),
\end{split}
\end{equation}
where $n_\gamma(\beta)$ is the number of times a $x_\infty$-subcurve $\beta\in\Gamma(E)$ occurs as a subcurve of $\gamma$
(in particular when $\gamma((0,1))\subseteq E$ we have that $n_\gamma(\beta)>0$ iff $\beta=\gamma\vert_{(0,1)}$).

 Notice that for $\overline{\eta}$-a.e. $\gamma$ all the subcurves $\beta$ have finite $\delta$-length
because $\gamma$ has finite length in $(\overline{E}, \delta)$ (in particular $\gamma$ and its subcurves $\beta$ have locally finite $d$-length in $E$ thanks to \cref{length-formula} applied to the elements of $\gamma\mres U$, for $U$ open bounded). In addition, they all lie in $\operatorname{supp}(T)$ for $\overline{\eta}$-a.e. $\gamma$ (as $\operatorname{supp}(\overline{T}) \subseteq \operatorname{supp}(T) \, \cup \{x_\infty\}$) and they are all arcs 
if $\gamma$ is an arc.

We use $p_E$ to define a kind of push-forward measure $\eta:=(p_E)_\#\overline{\eta}$, that is
\begin{equation}\label{pushf}
\eta(F):=\int_{\Theta(\overline{E})} p_E(\gamma,F) \de\overline{\eta}(\gamma)
\end{equation}
for a generic Borel set $F \subseteq \Gamma(E)$. Thanks to \cref{measurability} below, definition \eqref{pushf} is well-posed. From the definition, one can easily infer the change-of-variables formula
\begin{equation}\label{changeofv}
 \int_{\Gamma(E)} h (\beta)\de\eta(\beta)=\int_{\Theta(\overline{E})} \sum_{\{\beta:\ n_\gamma(\beta)>0\}} n_\gamma(\beta) h(\beta) \de\overline{\eta}(\gamma)
\end{equation}
for any $h \in L^1(\Gamma(E), \eta)$ or $h$ nonnegative and Borel. \\
Choosing $h(\beta)=\|T_\beta\|_\delta(B_R)$ in (\ref{changeofv}) (see \cref{meas-remark}) and using (\ref{masseq2}) one obtains
\begin{equation}\label{delta-dec}
\int_{\Gamma(E)} \|T_\beta\|_\delta(B_R) \de\eta(\beta)=\|T\|_\delta(B_R),
\end{equation}
where we have also exploited that for $\overline{\eta}$-a.e. $\gamma$ it holds $\mathbb{M}_\delta(T_\gamma)=\ell_\delta(\gamma)$, and thus
$$\|T_\gamma\|_\delta=\sum_{\{\beta:\ n_\gamma(\beta)>0\}} n_\gamma(\beta) \|T_\beta\|_\delta.$$

\noindent In particular, if we choose
$$h(\beta)=T_\beta(\omega), \quad\omega= (f, \pi)\in\operatorname{Lip}_{\mathrm{bs}}(E) \times\operatorname{Lip}(E), \, \operatorname{supp}(f) \subseteq B_R$$
then $h \in L^1(\Gamma(E), \eta)$ (as $|h(\beta)| \leq \|T_\beta\|_\delta \, \operatorname{Lip}^\delta(\pi \mres B_R)
\sup|f|$). Therefore, we can apply (\ref{changeofv}) to $h$ getting exactly (\ref{decomp}).  \\

\noindent Then, from \cref{no-canc} we obtain that (\ref{delta-dec}) also implies
$$\int_{\Gamma(E)} \|T_\beta\|(B_R) \de\eta(\beta)=\|T\|(B_R),$$
and thus, since $R>0$ is arbitrary, the inequality of measures $\| T\| \leq \int_{\Gamma(E)} \|T_\beta\| \de\eta(\beta)$ must be an equality, proving (\ref{masseq}). \\

\noindent Finally, for $T$ acyclic, choosing $h(\alpha)=\| \partial T_\alpha\|(B_R)$, one immediately obtains (\ref{boundaryeq}) from (\ref{boundaryeq2}).
Indeed (recall \cref{0-mass unchanged} and note that the boundary points in $E$ of an arc $\gamma \in \Theta(\overline{E})$ are exactly the union of the boundary points of the curves $\beta\in\gamma \mres E$, since $\gamma$ is only cut at infinity)         
\begin{equation*}
\begin{split}
\int_{\Gamma(E)} \|\partial T_\beta\|(B_R) \de\eta(\beta)&=\int_{\Theta(\overline{E})} 
\sum_{\{\beta:\ n_\gamma(\beta)>0\}} n_\gamma(\beta)\|\partial T_{\beta}\|(B_R)\de\overline{\eta}(\gamma) \\
&=\int_{\Theta(\overline{E})} \|\partial T_\gamma\|(B_R) \de\overline{\eta}(\gamma)\\
&=\| \partial \overline{T}\|(B_R)=\|\partial T\|(B_R). \qedhere
\end{split}
\end{equation*}
\end{proof}

\begin{remark}\label{transport-remark}
    The new results allow us to recover the ``transport'' part of the original statement between $(\partial T)^{-}$ and $(\partial T)^{+}$, with the added possibility of receiving or sending mass from infinity.
    In fact, if $A$ is the acyclic component of $T$ given by \cref{decompcycacyc}, the associated decomposition measure $\eta_{A}$ can be expressed as a sum of mutually singular measures $\eta_{B}, \eta_{BL}$ and $\eta_{BR}$, which are supported on the families of curves which are bounded, bounded on the left and bounded on the right  respectively.
    Then, by \cref{DefGamma} we note that $e_{0\,\#}\eta_{B}, e_{0\,\#}\eta_{BL}, e_{1\,\#}\eta_{B}$ and $e_{1\,\#}\eta_{BR}$ are well defined, where we denoted by
$$e_0:\{\beta \in \Gamma(E) \,\mid\,\operatorname{lim}_{t\to 0}d(\beta(t), \bar x)<\infty\}\to E,$$
$$e_{1}:\{\beta \in \Gamma(E)\,\mid\,\operatorname{lim}_{t\to1}d(\beta(t), \bar x)<\infty\}\to E$$
the maps $e_0(\beta):=\operatorname{lim}_{t\to 0}\beta(t)$ and $e_{1}(\beta):=\operatorname{lim}_{t\to 1}\beta(t)$.
    
Finally, we see from the precedent proof that we have 
 $$e_{0\,\#}\eta_{B}+ e_{0\,\#}\eta_{BL}=(\partial T)^{-}\quad\text{and}\quad e_{1\,\#}\eta_{B}+e_{1\,\#}\eta_{BR}=(\partial T)^{+}. $$
\end{remark}

\begin{remark}
    We remark that if $C$ is the cyclic component of $T$ in \cref{MT}, $\eta_C$ is the associated decomposition measure and $\eta^C_{B}, \eta^C_{BL}$ and $\eta^C_{BR}$ are  defined analogously to \cref{transport-remark}, then, by \cref{PS} and the construction argument in the proof of \cref{MT}, we find that 
    $$e_{0\,\#}\eta^C_{B}+ e_{0\,\#}\eta^C_{BL}= e_{1\,\#}\eta^C_{B}+e_{1\,\#}\eta^C_{BR}=\|C\|_{\delta} \leq \|T\|. $$
\end{remark}

In the proof above, we have assumed the following technical Lemma.

\begin{lemma}\label{measurability}
The function 
$p_E(\gamma,F)$ is Borel-measurable on $\Theta(\overline{E})$, for any Borel set $F$ in $(\Gamma(E), \tau_\Gamma)$.
\end{lemma}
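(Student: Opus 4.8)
The plan is to reconstruct the $x_\infty$-subcurves of a given $\gamma\in\Theta(\overline E)$ in a measurable way by tracking the \emph{excursions} of $\gamma$ away from the single point $x_\infty$ (\cref{single-point}), and then to express $p_E(\gamma,F)$ as a countable sum of Borel functions of $\gamma$. First I would reduce to a separable ambient space: $\overline\eta$ is concentrated on curves contained in $\operatorname{supp}(T)\cup\{x_\infty\}$, which is separable, so it suffices to argue on $\Theta$ of the corresponding separable subspace. On such a subspace currents are determined by countably many test pairs, so $(\Gamma(E),\tau_\Gamma)$ is the initial topology of a countable family of functionals $\beta\mapsto T_\beta(f\de\pi)$, hence second countable, and $\mathcal B(\Gamma(E))=\sigma\bigl(\{\beta\mapsto T_\beta(f_k\de\pi_k)\}_k\bigr)$ for a suitable countable family $\{(f_k,\pi_k)\}$. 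Using that everything is reparametrization invariant, I then fix for each non-constant $\gamma$ its constant-$\delta$-speed representative on $[0,1]$ (a Borel selection), so that $\{t\in(0,1):\delta(\gamma(t),x_\infty)>0\}$ is open and its connected components are exactly the excursion intervals; the constant curves (where $\ell_\delta(\gamma)=0$) form a Borel set and are handled directly.

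Fixing an enumeration $\{q_m\}_m$ of $\Q\cap(0,1)$, for $\gamma$ with $\gamma(q_m)\neq x_\infty$ I let $(\alpha_m(\gamma),\beta_m(\gamma))$ be the excursion interval containing $q_m$,
\[
\alpha_m(\gamma)=\sup\bigl(\{0\}\cup\{s\le q_m:\ \delta(\gamma(s),x_\infty)=0\}\bigr),\qquad
\beta_m(\gamma)=\inf\bigl(\{1\}\cup\{s\ge q_m:\ \delta(\gamma(s),x_\infty)=0\}\bigr),
\]
and I set $\sigma_m(\gamma)\in\Gamma(E)$ to be the class of $\gamma|_{(\alpha_m(\gamma),\beta_m(\gamma))}$ (and $\sigma_m(\gamma):=\ast$, a cemetery point, when $\gamma(q_m)=x_\infty$). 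Since $(t,\gamma)\mapsto\delta(\gamma(t),x_\infty)$ is jointly continuous, $\alpha_m,\beta_m$ are Borel. Declaring $q_m$ \emph{active} when it is the first rational lying in its own excursion, i.e.
\[
\chi_m(\gamma):=\mathbf 1\Bigl[\delta(\gamma(q_m),x_\infty)>0\Bigr]\ \prod_{j<m}\mathbf 1\Bigl[\ \delta(\gamma(q_j),x_\infty)=0\ \text{ or }\ q_m\notin(\alpha_j(\gamma),\beta_j(\gamma))\ \Bigr],
\]
each excursion is selected exactly once, at its first rational. Grouping components by their value in $\Gamma(E)$ and recalling the definition of $n_\gamma(\beta)$, I obtain the key identity, valid for every Borel $F\subseteq\Gamma(E)$,
\[
p_E(\gamma,F)=\sum_{m}\chi_m(\gamma)\,\mathbf 1_F\bigl(\sigma_m(\gamma)\bigr).
\]

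It then remains to check that each summand is Borel. The $\chi_m$ are Borel by the previous step. For $\sigma_m$, for every $(f,\pi)\in\operatorname{Lip}_{\mathrm{bs}}(E)\times\operatorname{Lip}(E)$ the function $G(s,\gamma):=\int_0^s f(\gamma(t))\,(\pi\circ\gamma)'(t)\de t$ is continuous in $s$ and Borel in $\gamma$, hence jointly Borel, so that
\[
T_{\sigma_m(\gamma)}(f\de\pi)=G\bigl(\beta_m(\gamma),\gamma\bigr)-G\bigl(\alpha_m(\gamma),\gamma\bigr)
\]
is a Borel function of $\gamma$, being a Carathéodory function composed with the Borel maps $\alpha_m,\beta_m$. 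Thus $\beta\mapsto T_\beta(f\de\pi)$ composed with $\sigma_m$ is Borel for every $(f,\pi)$; since $\mathcal B(\Gamma(E))$ is generated by the countable family $\{\beta\mapsto T_\beta(f_k\de\pi_k)\}_k$, the map $\sigma_m\colon\Theta(\overline E)\to\Gamma(E)\cup\{\ast\}$ is Borel measurable, whence $\gamma\mapsto\mathbf 1_F(\sigma_m(\gamma))$ is Borel for every Borel $F$. Consequently $p_E(\cdot,F)$, a countable sum of Borel functions, is Borel.

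The main obstacle is exactly the passage encoded in the displayed identity: the excursion decomposition is not continuous in $\gamma$ (excursions may split, merge, or disappear under $d_\Theta$-perturbations) and $\tau_\Gamma$ is not Hausdorff, so one cannot select the subcurves continuously. Indexing excursions by rational parameters and deduplicating through $\chi_m$ converts this unstable geometric decomposition into countably many genuinely Borel operations, while the reduction to a separable ambient space is what lets us identify $\mathcal B(\Gamma(E))$ with the $\sigma$-algebra generated by countably many current-evaluations and thereby transfer measurability from the functionals $T_{\sigma_m(\gamma)}(f\de\pi)$ to arbitrary Borel test sets $F$.
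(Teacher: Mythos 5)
Your strategy (enumerate the excursions away from $x_\infty$ by the first rational they contain, and push the problem onto the Borel measurability of the selection maps $\sigma_m$) is genuinely different from the paper's, which instead works with the counting functions $\phi^\eps_F$ (number of $x_\infty$-subcurves in $F$ at sup-distance $\geq\eps$ from $x_\infty$), proves upper/lower semicontinuity of these on suitable Borel pieces of $\Theta_L$, and extends to Borel $F$ by a Dynkin-system argument. However, your version has two genuine gaps. First, the claim that $(t,\gamma)\mapsto\delta(\gamma(t),x_\infty)$ is jointly continuous once $\gamma$ is put in constant-$\delta$-speed form is false: length is only lower semicontinuous under $d_\Theta$ (a curve carrying a dense zigzag of extra length $1$ inside a ball of radius $1/n$ near its starting point is $d_\Theta$-close to a segment, yet its arclength parametrization at $t=1/2$ sits near the starting point rather than near the midpoint), so the arclength reparametrization is not continuous on the quotient space $\Theta(\overline{E})$, and its Borel measurability --- on which $\alpha_m$, $\beta_m$, $\chi_m$, $\sigma_m$ and the Carath\'eodory argument for $G(s,\gamma)$ all rest --- is asserted but not proved. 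This is exactly the instability you name at the end; the rational indexing handles the combinatorial deduplication but not the measurability of the evaluation maps $\gamma\mapsto\gamma(q_m)$ on equivalence classes.

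Second, the identification $\mathcal B(\Gamma(E))=\sigma\bigl(\{\beta\mapsto T_\beta(f_k\,\de\pi_k)\}_k\bigr)$ is not justified. For a general pair $(f,\pi)$ one has $T_\beta(f\,\de\pi)=\lim_k T_\beta(f_k\,\de\pi_k)$ only pointwise in $\beta$, with an error controlled by $\|T_\beta\|(B_R)$, which is unbounded over $\Gamma(E)$; hence the general evaluations are \emph{not} continuous for the initial topology of the countable family, $\tau_\Gamma$ need not be second countable, and its Borel $\sigma$-algebra may be strictly larger than the one generated by countably many evaluations (an arbitrary $\tau_\Gamma$-open set is an uncountable union of basic sets, which a $\sigma$-algebra does not see). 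Consequently your final step --- from ``$T_{\sigma_m(\cdot)}(f\,\de\pi)$ is Borel for each $(f,\pi)$'' to ``$\mathbf 1_F(\sigma_m(\cdot))$ is Borel for every Borel $F$'' --- does not go through. The paper circumvents precisely this point: lower semicontinuity of $\phi^\eps_F$ is proved for arbitrary open $F$ because membership of a limit subcurve $\beta_i^*$ in $F$ is witnessed by a single finite intersection of the form \eqref{finifini}, i.e.\ by finitely many test pairs, and is therefore inherited by the approximating subcurves $\beta_i^n$; the passage to all Borel $F$ is then done by Dynkin's $\pi$-$\lambda$ theorem applied to the family $\mathcal M$, using that $\phi^\eps=\phi^\eps_{\Gamma(E)}$ is finite-valued. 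Some analogue of these two devices (semicontinuity on the level sets $A_{k,h}\cap\Theta_L$, and a Dynkin argument in $F$) would be needed to repair your argument.
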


\begin{proof}
We
prove that for all $L>0$ the restriction of $p_E(\gamma,F)$ to the Borel sets
$$\Theta_L:=\bigl\{\gamma\in\Theta(\overline{E})\mid \ell_\delta(\gamma)\leq L\bigr\}$$ 
is Borel measurable.
Fixed $\eps>0$, we denote by $\phi_F^\eps(\gamma)$ the number of maximal $x_\infty$-subcurves $\beta$ of $\gamma\in\Theta_L$, counted with multiplicity, 
which belong to $F$ and satisfy $\|\beta-x_\infty\|_\infty\geq\eps$, where
$$\| \beta-x_\infty\|_\infty:=\sup_{t \in (0,1)} \delta(\beta(t), x_\infty).$$
Notice that $\phi^\eps_F(\gamma)\leq\ell_\delta(\gamma)/\eps\leq L/\eps<\infty$, and that 
$\phi_F^\eps(\gamma)\uparrow p_E(\gamma,F)$ as $\eps\to 0$.
So, it suffices to check the measurability of the maps $\phi_F^\eps$, $F \in \mathcal{B}(\Gamma(E))$. \\
First, we prove that $\phi^\eps(\gamma):=\phi_{\Gamma(E)}^\eps(\gamma)$
is upper semicontinuous on $\Theta_L$. That is, if we take $\gamma_n \rightarrow \gamma$ in $\Theta_L$ (where we identify $\gamma_n$, $\gamma$ with their parametrizations on $[0,1]$ which give the uniform convergence) such that (up to a subsequence) $\phi^\eps(\gamma_n) \geq k$ for some $k \in \N$, then also $\phi^\eps(\gamma) \geq k$. Indeed, we can find $x_\infty$-subcurves $\beta_1^n, \ldots, \beta_k^n$ in $\Gamma(E)$ such that  
\begin{enumerate}
\setlength\itemsep{0.5em}
\item[(i)] $\beta_i^n$ is the restriction of $\gamma_n$ to some sub-interval $(a_i^n, b_i^n)\subseteq [0,1]$, with $b_i^n \leq a_{i+1}^n$ for any $1 \leq i < k$; 
\item[(ii)] $\| \beta_i^n-x_\infty\|_\infty \geq \eps$, for all $i=1, \ldots, k$. 
\end{enumerate}
The equicontinuity of $\gamma_n$, together with the condition $\| \beta_i^n-x_\infty\|_\infty \geq \eps$, provides a uniform lower bound on
$b_i^n-a_i^n$, hence one can pass to the limit as $n\to\infty$, up to subsequences, to obtain that $a_i^n \rightarrow a_i$, $b_i^n \rightarrow b_i$ 
for some $a_i, \, b_i \in [0,1]$ with $a_i<b_i$.  We deduce that affine reparametrizations of $\beta_i^n$ (closer and closer to the identity, recall that $\gamma_n \rightarrow \gamma$ uniformly) are uniformly convergent to $\beta_i= \gamma |_{(a_i, b_i)}$ with either
$\gamma(a_i)=x_\infty$ or $\gamma(b_i)=x_\infty$, or both (at least when $k>1$) and
$\| \beta_i-x_\infty\|_\infty\geq \eps$. Notice that
$\beta_i$ are not necessarily $x_\infty$-subcurves, because they are not necessarily $E$-valued. Nevertheless any
$\beta_i$ has a subcurve $\beta_i^*$ (and then a subcurve of $\gamma$) which is a $x_\infty$-subcurve with $\|\beta_i^*-x_\infty\|_\infty \geq \eps$, and this proves that $\phi^\eps(\gamma)\geq k$.
 
Now, we want to prove that $\phi^\eps_F$ is Borel measurable on $\Theta_L$ for any $F \subseteq E$ open. Note that it suffices to show it for $F \in \mathcal{E}_R$, for all $R \in \N$, where we define $\mathcal{E}_R\subseteq\Gamma(E)$ as the class made by unions of finite intersections of the form
\begin{equation}\label{finifini}
\bigcap_{j=1}^N\left\{\beta\in\Gamma(E)\mid \bigl|T_\beta(f_j\,\de\pi_j)-T_{\bar\beta_j}(f_j\,\de\pi_j)\bigr|<\lambda_j\right\}
\end{equation} 
for $\bar\beta_j\in\Gamma(E)$, $\lambda_j>0$, $f_j\in {\rm Lip_{bs}}(E)$, $\pi_j\in {\rm Lip}(E)$, $j=1,\ldots,N$, with
${\rm supp}(f_j)\subseteq B_R$. Indeed, since the sets of the form (\ref{finifini}) are a basis for the topology $\tau_\Gamma$ (letting also $R$ variable), any open set can be written as an increasing union $F=\bigcup_{R \in \N} F_R$, $F_R \subseteq F_{R+1}$, with $F_R \in \mathcal{E}_R$, and hence $\phi^\eps_F=\sup_R \phi^\eps_{F_R}$.

Consider then $F \in \mathcal{E}_R$, for some $R>0$. Let also $\eps_0:=\delta(x_\infty, \overline{B}_R)$, so that if some curve $\beta \in \Gamma(E)$ satisfies $\|\beta-x_\infty\|_\infty < \eps_0$ then $\beta$ does not intersect the ball $B_R$. 
We show that the function $\phi_{F}^\eps(\gamma)$ is lower semicontinuous when restricted to any of the Borel sets $A_{k,h} \cap \Theta_L$, where
$$A_{k,h}:=\{\gamma \in \Theta(\overline{E}): \, \phi^\eps(\gamma)=k, \, \phi^{\eps_0}(\gamma)=h\},$$
for $k, \, h \in \N$ ($A_{k,h}$ is Borel thanks to the measurability of $\phi^\eps, \, \phi^{\eps_0}$). This is sufficient to prove the measurability of $\phi_F^\eps$: indeed, the finiteness of $\phi^\eps$ gives
$$\Theta(\overline{E})=\bigcup_h \bigcup_k A_{k,h}.$$
Now, take $\gamma_n \rightarrow \gamma$ in $\Theta(\overline{E})$, with $\gamma_n, \gamma \in A_{k,h} \cap \Theta_L$, and find
$x_\infty$-subcurves $\beta_1^n, \ldots, \beta_k^n$ of $\gamma_n$ as above. Note that the limits $\beta_1, \ldots, \beta_k$ built as in the previous step
may not be $E$-valued and hence $\beta_i \, \mres \, E$ may consist of multiple curves. However, for any $i=1,\ldots,k$, exactly one of the $x_\infty$-subcurves of $\beta_i$, 
which we will call $\beta_i^*$ (possibly coinciding with $\beta_i$ itself), satisfies $\|\beta_i^*-x_\infty\|_\infty \geq \eps$, since otherwise we would have $\phi^\eps(\gamma)>k$. Also, we claim that $\phi^{\eps_0}(\gamma)=h$ implies that $\beta_i^*$ is the only element of $\beta_i \mres E$ such that $\|\beta_i^*-x_\infty\|_\infty \geq \eps_0$. This is obvious for $\eps_0\geq\eps$, while for $\eps_0<\eps$ it can be proved constructing in the same way 
$x_\infty$-subcurves $\widetilde{\beta}_1^n, \ldots, \widetilde{\beta}_h^n$ of $\gamma_n$, 
with $\| \widetilde{\beta}_i^n-x_\infty\|_\infty \geq \eps_0$; exactly $k$ of them will coincide with $\beta_1^n, \ldots, \beta_k^n$. 
This leads to the inclusion 
$$\{\beta_1,\ldots,\beta_k\}\subseteq\{\tilde{\beta}_1,\ldots,\tilde{\beta}_h\}$$
for the limit curves, with $\tilde{\beta}_i$ distinct, each one containing a $x_\infty$-subcurve with distance from $x_\infty$ larger than 
$\eps_0$. Hence, if some $\beta_i$ were to contain more than one $x_\infty$-subcurve $\alpha$ with
$\|\alpha-x_\infty\|_\infty\geq\eps_0$, we would contradict $\phi^{\eps_0}(\gamma)=h$. Calling $\bar\beta_i$ that subcurve,
the inequality $\eps_0<\eps$ gives $\bar\beta_i=\beta_i^*$, proving the claim.
Thus, our choice
of $\eps_0$ gives
$$\beta_i \mres B_R=\beta_i^* \mres B_R,$$
and therefore the convergence $\beta_i^n \rightarrow \beta_i$ (thought as curves in $\Theta(\overline{E})$) combined with \cref{PSlemma} (here we use that $\gamma_n \in \Theta_L$ to infer that $\ell_\delta(\beta_i^n)$ are uniformly bounded by $L$) yields
$$
\lim_{n\to\infty}T_{\beta^n_i}(f \de\pi)=T_{\beta_i}(f \de\pi)=T_{\beta_i^*}(f\de\pi) \qquad \forall i=1, \ldots, k,
$$
for all $f \in {\rm Lip_{bs}}(E)$, $\pi\in {\rm Lip}(E)$ with
${\rm supp}(f)\subseteq B_R$.
In particular, if $\beta_i^* \in F$ for some $1 \leq i \leq k$ then $\beta_i^*$ belongs to one of the sets in \eqref{finifini}
and so also $\beta_i^n \mres E \in F$ for $n$ sufficiently large.
Hence, $\liminf_n \phi_F^\eps(\gamma_n) \geq \phi_F^\eps(\gamma)$, proving the desired semicontinuity. \\
To conclude, we obtain that $\phi_F^\eps$ is Borel-measurable on $\Theta_L$ for any Borel set $F$ noticing that the family
$$\mathcal{M}:=\bigl\{F \in \mathcal{B}(\Gamma(E)): \phi_F^\eps \mbox{ is Borel-measurable on  }\Theta_L \bigr\}$$
is a Dynkin system containing the $\pi$-system of open sets. Indeed, if $F \in \mathcal{M}$ then also $\Gamma(E)\setminus F\in \mathcal{M}$, because
$\phi^\eps-\phi_F^\eps$ 
is Borel-measurable, since we have already checked that $\Gamma(E) \in \mathcal{M}$, and the difference makes sense as $\phi^\eps$ only attains finite values. Furthermore, the implication $F_n \in \mathcal{M}$, $F_n$ disjoint $\Rightarrow \, \bigcup_n F_n \in \mathcal{M}$ is trivial. 
Thus, by Dynkin's $\pi$-$\lambda$ Theorem, $\mathcal M$ contains the whole Borel $\sigma$-algebra. Eventually we let $\eps \to 0$ and $L \to\infty$.
\end{proof}

\newpage
\printbibliography
\end{document}